\documentclass[12pt,a4paper,twoside]{amsart}
\usepackage{geometry}
\usepackage{tabls}

\usepackage{url}

\usepackage{amssymb}
\usepackage{mathrsfs}
\usepackage{amsmath}
\usepackage{amsthm}
\usepackage{amscd}
\usepackage{fancyhdr}
\usepackage{enumerate}
\usepackage{paralist}
\usepackage{graphicx}
\usepackage{cite}

\usepackage{footmisc}
\numberwithin{equation}{section}

\theoremstyle{plain}

\newtheorem{Def}[equation]{Definition}
\newtheorem{Thm}[equation]{Theorem}
\newtheorem{lem}[equation]{Lemma}
\newtheorem{prop}[equation]{Proposition}
\newtheorem{rem}[equation]{Remark}

\newtheorem{conj}[equation]{Conjecture}

\begin{document}

\title{Divided differences and  complex variations of  multiple zeta-star values}

\author{Jiangtao Li}

\email{lijiangtao@csu.edu.cn}
\address{Jiangtao Li \\ School of Mathematics and Statistics, HNP-LAMA, Central South University, Hunan Province, China}

\begin{abstract}
The derived set of multiple zeta-star values is the half-line $[1,+\infty)$. In this paper, we study the corresponding limiting  set for  finite multiple star harmonic sums. Using the theory of divided differences, we construct a natural complex analytic interpolation of finite multiple star harmonic sums.
For real $s>1$, we analyze the range of this interpolation in detail and prove a finite zeta-star correspondence.   In the  complex case, we formulate  an injectivity conjecture, which may be viewed as the complex variation of zeta-star correspondence for multiple zeta-star values.
 \end{abstract}

\let\thefootnote\relax\footnotetext{
2020 $\mathnormal{Mathematics} \;\mathnormal{Subject}\;\mathnormal{Classification}$. 11M32, 41A05.\\
$\mathnormal{Keywords}$:  Multiple zeta-star values, divided differences. \\
Project funded by the National Natural Science Foundation of China (Grant No. 12571009) and the
Natural Science Foundation of Hunan Province,
China (Grant No. 2026JJ40003). \\
}

\maketitle

\section{Introduction}\label{int}
      Multiple zeta values are defined by
      \[
      \zeta(k_1,\cdots,k_r)=\sum_{n_1>\cdots>n_r>0}\frac{1}{n_1^{k_1}\cdots n_r^{k_r}},\qquad k_1\geq 2,k_2,\cdots, k_{r}\geq 1.
      \]
      For a multiple zeta value $\zeta(k_1,\cdots,k_r)$, denote by $N=k_1+\cdots+k_r$ and $r$ its weight  and depth respectively.

  For  $k_1\geq 2,k_2,\cdots, k_{r}\geq 1$, the multiple zeta-star value $\zeta^{\star}(k_1,\cdots,k_r)$ is defined by
        \[
        \zeta^{\star}(k_1,\cdots,k_r)=\sum_{n_1\geq\cdots\geq n_r\geq 1}\frac{1}{n_1^{k_1}\cdots n_r^{k_r}}.
        \]
        The theory of multiple zeta-star values has been developed in parallel with, but not merely as a byproduct of, the theory of multiple zeta values. Its algebraic foundations are closely related to Hoffman’s work on multiple harmonic series and harmonic algebras \cite{hof1},\cite{hof2}. Important structural and identity results were obtained by Ohno-Wakabayashi \cite{OW}, Ohno-Zudilin \cite{OZ}, Ihara-Kajikawa-Ohno-Okuda \cite{IKOO}, Muneta \cite{mun},  Kaneko-Ohno \cite{KO},  Yamamoto \cite{yam1}, Zhao \cite{zha}, and others. 
        
                In \cite{lit}, the author  discovered the following bijective map: \\
        Denote by $$\mathbb{Z}^{+}=\{1,2,\cdots, n,\cdots\}$$ and 
        \[
        \mathcal{T}=\big{\{} (k_1,\cdots,k_r,\cdots)\;\big{|}\;   k_i\in \mathbb{Z}^{+} , i\geq 1; k_1\geq 2; \mathrm{if}\; k_1=2,\, \mathrm{then}\; k_i\geq 2 \;\mathrm{for\;some} \;i\geq 2 \big{\}}.
        \]
        There is a bijective map
        \[
        \eta: \mathcal{T}\rightarrow (1,+\infty),\]
  \[
  {\bf k}=(k_1,\cdots,k_r,\cdots)\mapsto \eta({\bf{k}})=\mathop{\mathrm{lim}}_{r\rightarrow +\infty} \zeta^{\star}(k_1,\cdots,k_r).
                \]
                We call the map $\eta$ zeta-star correspondence. In fact, one can compare the theory of zeta-star correspondence to the theory of continued fractions. For more related topics on  zeta-star correspondence, see\cite{kam}, \cite{lia}, \cite{lid},\cite{lir},\cite{lit}.  Hirose, Murahara and Onozuka \cite{hmo}  discovered the above bijective map independently.

 For $(k_1,\cdots,k_r)\in (\mathbb{Z}^+)^r$  and $n\in \mathbb{Z}^+$, define 
 \[
 H_n^\star(  \varnothing )=1,\;\;\;\;H_n^\star(k_1, \cdots,k_r)=\sum_{n\geq m_1\geq \cdots \geq m_r\geq 1} \frac{1}{m_1^{k_1}\cdots m_r^{k_r}}.
 \]
  For a fixed $n\geq 2$, is there any kind of zeta-star correspondence map for $H_n^\star$?
 \begin{Thm}\label{int}
 Denote by $\widehat{\mathcal{T}}=(\mathbb{Z}^+)^\infty$, for $n\in \mathbb{Z}^+$ and $n\geq 2$, there is a bijective map 
 \[\mathfrak{h}_n: \widehat{\mathcal{T}}\rightarrow (1, n],
 \]
 \[
  {\bf k}=(k_1,\cdots,k_r,\cdots)\mapsto \mathfrak{h}_n({\bf{k}})=\mathop{\mathrm{lim}}_{r\rightarrow +\infty} H_n^{\star}(k_1,\cdots,k_r).
                \]
  \end{Thm}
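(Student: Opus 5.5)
The approach is a nested-interval (continued-fraction-like) argument. Allow an exponent $0$ in the finite sums, so $H_n^\star(k_1,\dots,k_r,0)=\sum m_1^{-k_1}\cdots m_r^{-k_r}\,m_r$. Since $\sum_{m_{r+1}\le m_r}1=m_r$, this gives the identity
\[
H_n^\star(k_1,\dots,k_r,0)=H_n^\star(k_1,\dots,k_{r-1},k_r-1),
\]
where the right side has a final exponent $0$ when $k_r=1$. For a prefix ${\bf p}=(k_1,\dots,k_r)$ (including $r=0$), set
\[
I({\bf p})=\big(H_n^\star(k_1,\dots,k_r),\,H_n^\star(k_1,\dots,k_r,0)\big].
\]
For $r=0$ this is $(1,n]$, because $H_n^\star(0)=n$.

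\textbf{Step 1 (existence and location of the limit).} Appending an index with $k_{r+1}\ge1$ multiplies each term by $\sum_{m_{r+1}\le m_r}m_{r+1}^{-k_{r+1}}\ge1$. Hence $H_n^\star(k_1,\dots,k_r)$ is nondecreasing in $r$, with strict increase as soon as $m_r\ge2$ occurs, which it does. Comparing termwise, $H_n^\star(k_1,\dots,k_r,\dots,k_s)\le H_n^\star(k_1,\dots,k_r,1^{s-r})$.

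The generating function $\sum_s H_m^\star(\{1\}^s)t^s=\prod_{j\le m}(1-t/j)^{-1}$ has its dominant simple pole at $t=1$, so $H_m^\star(\{1\}^s)\to\prod_{j=2}^m (1-1/j)^{-1}=m$. Expanding in the first $r$ summation variables then gives
\[
\lim_s H_n^\star(k_1,\dots,k_r,1^{s})=\sum m_1^{-k_1}\cdots m_r^{-k_r}m_r=H_n^\star({\bf p},0).
\]
Therefore $\mathfrak h_n({\bf k})$ exists and lies in $[\,H_n^\star({\bf p}),\,H_n^\star({\bf p},0)\,]$ for every prefix ${\bf p}$ of ${\bf k}$. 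Strict monotonicity rules out the left endpoint, so $\mathfrak h_n({\bf k})\in I({\bf p})$.

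\textbf{Step 2 (partition of each interval by its children).} For $a\ge1$ the child interval is
\[
I({\bf p},a)=\big(H_n^\star({\bf p},a),\,H_n^\star({\bf p},a-1)\big].
\]
These are consecutive: the right endpoint of $I({\bf p},a+1)$ is the left endpoint of $I({\bf p},a)$. The top endpoint is $H_n^\star({\bf p},0)$. Dominated convergence gives $H_n^\star({\bf p},a)\to H_n^\star({\bf p})$ as $a\to\infty$, since only $m_{r+1}=1$ survives. Hence $I({\bf p})$ is the disjoint union of the $I({\bf p},a)$, $a\ge1$, and this holds for all prefixes, including the empty one, where the children $(H_n(a),H_n(a-1)]$ tile $(1,n]$.

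\textbf{Step 3 (shrinking, the key estimate).} The length of $I(k_1,\dots,k_r)$ equals
\[
\sum_{n\ge m_1\ge\cdots\ge m_r\ge1}\frac{m_r-1}{m_1^{k_1}\cdots m_r^{k_r}}.
\]
Only terms with all $m_i\ge2$ contribute, and each such term is at most $(n-1)2^{-r}$. There are at most $\binom{r+n-2}{n-2}$ such sequences, so the length is $O(r^{n-2}2^{-r})\to0$ uniformly in ${\bf k}$. This is where $n$ fixed (finite) is essential, and I expect it to be the main point to get right.

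\textbf{Conclusion.} Injectivity: if ${\bf k}\ne{\bf k}'$ first differ at position $r+1$, then the values $\mathfrak h_n({\bf k})$ and $\mathfrak h_n({\bf k}')$ lie in disjoint children of $I(k_1,\dots,k_r)$ by Step 1, so they differ.

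Surjectivity: given $x\in(1,n]$, Step 2 selects a unique chain of nested intervals containing $x$, hence a sequence ${\bf k}\in\widehat{\mathcal T}$. Both $x$ and $\mathfrak h_n({\bf k})$ lie in every interval of this chain, whose lengths tend to $0$ by Step 3. Hence $x=\mathfrak h_n({\bf k})$.
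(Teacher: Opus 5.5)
Your proposal is correct and follows essentially the paper's route. Your intervals $I(k_1,\dots,k_r)$ are exactly the paper's $Z_{k_1,\dots,k_r}(n)$: the exponent-$0$ identity just packages the right endpoint $H_n^\star(k_1,\dots,k_{i-1},k_i-1)$ (or $n$). The length formula and the nested-interval surjectivity argument are also the same, and your $\lim_s H_n^\star({\bf p},1^s)=H_n^\star({\bf p},0)$ is Lemma~\ref{inf1}. The differences are minor: you get injectivity from disjointness of the children rather than from the total order of Proposition~\ref{or}, you get shrinking from a direct counting bound $O(r^{n-2}2^{-r})$ rather than the comparison with $n-H_n^\star(\{1\}^r)\to 0$, and you explicitly prove the partition of each interval into its children, which the paper only asserts.
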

 
 In fact, for $n=2$, Theorem \ref{int} is essentially another reformulation of the binary expansion. For fixed $(k_1,\cdots,k_r)\in (\mathbb{Z}^+)^r$, the multiple  harmonic sum 
 \[
 H_n^{\star}(k_1,\cdots,k_r) \]
 can be viewed as a function of the  integer variable $n$.  More generally, is there any kind of zeta-star correspondence map for $\mathfrak{h}_s^\star$ when $s$ is a complex variable instead of a positive integer?
 
 To achieve the above goals, we have to first  define $H_s^\star(k_1,\cdots,k_r)$.  For $\mathrm{Re}\;s>0$ and $k_1\in \mathbb{Z}^+$, define 
 \[
 G_n^\star(s;k_1)=\sum_{m=1}^n\frac{1}{(m+s)^{k_1}}  . \]
 
 \[
 H_s^\star(\varnothing)=1,\quad   H_s^\star(k_1)=\mathop{\mathrm{lim}}_{M\rightarrow +\infty} \bigg{(}H_M^\star(k_1)-G_M^\star(s;k_1)     \bigg{)}.
 \]
 It is clear that $$H_s^\star(k_1)\big{|}_{s=n}=H_n^\star(k_1)$$for $n\in  \mathbb{Z}^+$. 
 
 For $r\geq 2$,$(k_1,\cdots,k_r)\in(\mathbb{Z}^+)^r$ and $\mathrm{Re}(s)>0$, by induction, define
 \[
  G_n^\star(s;k_1,\cdots,k_r)=\sum_{n\geq m_1\geq \cdots \geq m_r\geq 1} \frac{1}{(m_1+s)^{k_1}\cdots (m_r+s)^{k_r}}.
     \]
 \[
H_s^\star(k_1, \cdots,k_r) = \mathop{\mathrm{lim}}_{M\rightarrow +\infty}  \Big{(}   H_M^\star(k_1,\cdots,k_r)-   \sum_{j=1}^r  G_M^\star(s;k_1,\cdots, k_j)H_s^\star (k_{j+1},\cdots, k_r) \Big{)} . \]
  
    In fact, we have 
  \begin{Thm}\label{noint}
 $(i)$ For $r\geq 1$, $(k_1,\cdots, k_r)\in (\mathbb{Z}^+)^r$, $H_s^\star(k_1,\cdots,k_r)$ is well-defined and analytic on
  $\mathrm{Re}(s)>0$. Moreover, for $n\in \mathbb{Z}^+$, 
  \[
  H_s^\star(k_1,\cdots,k_r)\big{|}_{s=n}= H_n^\star(k_1, \cdots,k_r). \]
  $(ii)$ Furthermore, let 
  \[
X_j=x_1x_2\cdots x_{k_1+\cdots+k_j},\qquad 1\leq j\leq r.  \]  
  Then one has the integral representation 
  \[
  H_s^\star(k_1,\cdots,k_r)=\int_{(0,1)^{k_1+\cdots+k_r}} [1,X_1,\cdots, X_r]t^{s+r-1}dx_1\cdots dx_{k_1+\cdots+k_r}
  \] on $\mathrm{Re}(s)>0$.   Here $[1,X_1,\cdots, X_r]t^{s+r-1}$ is the divided difference of $t^{s+r-1}$ relative to $1,X_1,\cdots, X_r$.
      \end{Thm}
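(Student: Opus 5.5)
The plan is to prove $(i)$ by induction on $r$ using an explicit series, and $(ii)$ by checking that the integral satisfies the same recursion with the same initial value.

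\textbf{Step 1 (analyticity of $H_s^\star$).} For $r=1$ one has
\[
H_s^\star(k_1)=\sum_{m\geq1}\Big(\frac1{m^{k_1}}-\frac1{(m+s)^{k_1}}\Big).
\]
The summand is $O(|s|m^{-k_1-1})$ locally uniformly in $s$ on $\mathrm{Re}(s)>0$, so the series defines an analytic function there.

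For $r\geq2$ I will assume, inductively, that every $H_s^\star(k_{j+1},\cdots,k_r)$ with $j\geq1$ is analytic. I then split $H_M^\star(k_1,\cdots,k_r)$ according to how many of the indices $m_i$ exceed a cutoff. After subtracting $\sum_j G_M^\star(s;k_1,\cdots,k_j)H_s^\star(k_{j+1},\cdots,k_r)$, I expect the bracket to become a nested sum whose innermost layer is a difference of the form $m^{-k}-(m+s)^{-k}$. Such a sum converges locally uniformly, exactly as in the case $r=1$.

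\textbf{Step 2 (interpolation at $s=n$).} At $s=n$ I use the combinatorial decomposition obtained by sorting each chain $M\geq m_1\geq\cdots\geq m_r\geq1$ according to the number $j$ of indices with $m_i>n$. This gives
\[
H_M^\star(k_1,\cdots,k_r)=\sum_{j=0}^r G_{M-n}^\star(n;k_1,\cdots,k_j)\,H_n^\star(k_{j+1},\cdots,k_r),
\]
where the $j=0$ term is $H_n^\star(k_1,\cdots,k_r)$ itself. For $j\geq1$, the quantity $G_M^\star(n;\cdots)-G_{M-n}^\star(n;\cdots)$ is a sum over chains with $m_1>M-n$, so it tends to $0$ as $M\to\infty$. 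By induction $H_s^\star(k_{j+1},\cdots,k_r)|_{s=n}=H_n^\star(k_{j+1},\cdots,k_r)$, and passing to the limit gives $H_s^\star(k_1,\cdots,k_r)|_{s=n}=H_n^\star(k_1,\cdots,k_r)$.

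\textbf{Step 3 (the integral is analytic).} Let $I_s(k_1,\cdots,k_r)$ denote the right-hand side of $(ii)$. By the Hermite--Genocchi formula, $[1,X_1,\cdots,X_r]t^{s+r-1}$ is an average of $f^{(r)}$ over the simplex spanned by the nodes, where $f(t)=t^{s+r-1}$. Since $f^{(r)}(t)$ is a constant multiple of $t^{s-1}$ and all nodes lie in $(0,1]$, this gives a bound of order $X_r^{\mathrm{Re}(s)-1}$ when $\mathrm{Re}(s)<1$, and a bounded integrand otherwise. Either bound is integrable over the cube. Holomorphy in $s$ then follows from dominated convergence together with Morera's theorem.

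The base case $r=1$ is a direct computation:
\[
[1,X]t^{s}=\frac{1-X^{s}}{1-X}=\sum_{m\geq0}\big(X^m-X^{m+s}\big),
\]
and integrating term by term over $(0,1)^{k_1}$ gives exactly the series for $H_s^\star(k_1)$.

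\textbf{Step 4 (the integral satisfies the recursion; main obstacle).} For integer exponents I use that $[1,X_1,\cdots,X_r]t^{M+r-1}=h_{M-1}(1,X_1,\cdots,X_r)$, the complete homogeneous polynomial. Integrating its monomials $X_1^{a_1}\cdots X_r^{a_r}$ over the cube reproduces $H_M^\star(k_1,\cdots,k_r)$.

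It remains to compare $[1,X_1,\cdots,X_r]t^{M+r-1}$ with $[1,X_1,\cdots,X_r]t^{s+r-1}$. I plan to split the set of monomials of $h_{M-1}$ at the "height $s$" level. Concretely, I will use the recurrence for divided differences that separates the nodes into $\{1,X_1,\cdots,X_j\}$ and $\{X_j,\cdots,X_r\}$, which is a Leibniz/Steffensen-type splitting. For each $j$, the piece on the first group of nodes should integrate to $G_M^\star(s;k_1,\cdots,k_j)$, and the piece on the second group to $I_s(k_{j+1},\cdots,k_r)$. The leftover term should vanish in the limit $M\to\infty$.

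Making this splitting identity precise for non-integer $s$ is the step I expect to be hardest. If the splitting is awkward, my first fallback is to expand every divided difference as a geometric-type series in the $X_i$ and match coefficients with the nested series from Step 1.

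Once the recursion is established, induction on $r$ gives $I_s=H_s^\star$ on $\mathrm{Re}(s)>0$.
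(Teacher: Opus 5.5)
You have the paper's architecture: formula (4) via $h_{M-1}$, Hermite--Genocchi plus Morera for the holomorphy of $I_s$, a Leibniz-type splitting with the shared node $X_j$, and a vanishing remainder. Steps 2 and 3 are fine. But the two steps that actually carry the proof are only announced, and that is a genuine gap.

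First, for complex $s$ there is no \emph{height $s$} at which to cut the monomials of $h_{M-1}$. What you need is an exact pointwise identity (Lemma \ref{ker}$(iii)$). Writing $A_r(s;X)=[1,X_1,\cdots,X_r]t^{s+r-1}$, it reads
\[
A_r(M+s;X)-A_r(s;X)=\sum_{j=1}^r [1,X_1,\cdots,X_j]t^{M+j-1}\cdot X_j^{s}\Big[1,\frac{X_{j+1}}{X_j},\cdots,\frac{X_r}{X_j}\Big]t^{s+r-j-1},
\]
where the $j=r$ factor is $1$. The twist $X_j^s$ comes from rescaling the second group of nodes: $X_j[X_j,\cdots,X_r]t^{s+r-j-1}=X_j^{s}[1,X_{j+1}/X_j,\cdots,X_r/X_j]t^{s+r-j-1}$. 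The first-group factor by itself integrates to $H_M^\star(k_1,\cdots,k_j)$, not to $G_M^\star(s;k_1,\cdots,k_j)$. Because $X_j$ involves only $x_1,\cdots,x_{K_j}$ and the ratios only the remaining variables, Fubini, (5) and the induction hypothesis in lower depth turn the recursion bracket into $\int\big(A_r(s;X)+A_r(M;X)-A_r(M+s;X)\big)dx$.

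You can prove the identity with what you already have. At $s=n\in\mathbb{Z}^+$ it is your Step 2 decomposition of chains, with $M+n$ in place of $M$, read monomial by monomial in the block products. For fixed nodes $1=X_0>X_1>\cdots>X_r>0$, every term has the form $\sum_{i=0}^r c_iX_i^{s}$ with $c_i$ independent of $s$. So agreement at $s=1,\cdots,r+1$ forces agreement for all complex $s$, by a Vandermonde argument.

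Second, \emph{the leftover term should vanish} is where the real analysis lies. The leftover is $\int_{(0,1)^{K_r}}\big(A_r(M;X)-A_r(M+s;X)\big)dx$. It tends to $0$ pointwise, but you cannot pass to the limit term by term: for $k_1=1$ one has $\int A_r(M;X)\,dx=H_M^\star(k_1,\cdots,k_r)\to\infty$. So you need quantitative control of the difference. The paper supplies it in Lemma \ref{conv}:
\begin{enumerate}
\item write $t^{M+r-1}-t^{M+s+r-1}=(1-t)t^{M+r-1}\phi_s(t)$ with $\phi_s=(1-t^s)/(1-t)$;
\item remove the node $1$ via $[1,y_1,\cdots,y_r]((t-1)g)=[y_1,\cdots,y_r]g$;
\item bound $|\phi_s^{(j)}(t)|\le Ct^{-j}$ and apply Hermite--Genocchi on the remaining nodes;
\item evaluate the resulting integral exactly through (4).
\end{enumerate}
This gives an $L^1$ bound $O\big((\log M)^{r-1}/M\big)$, locally uniformly in $s$.

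Without these two ingredients you have neither the existence of the recursive limit nor its identification with $I_s$. Step 1 does not fill the hole: it is only a heuristic (\emph{I expect}). Once the identity and the estimate are in place it is also unnecessary, since existence, analyticity and the integral formula then follow together by induction on $r$.
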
 
      
A divided difference is a generalization of the ordinary difference quotient to several points. It is a fundamental tool in interpolation, approximation theory and numerical analysis. For a survey about divided differences, see \cite{boo1}. We will give a brief introduction in Section \ref{ac}. 
    
   Formula in Theorem \ref{noint}, $(ii)$ for $s=n$  appears to be an elementary divided-difference reformulation of the known integral representations of finite multiple harmonic star sums. The existence of integral representations for such sums is classical in this context, especially in Yamamoto’s work \cite	{yam}, while the identity connecting divided differences of monomials with complete homogeneous symmetric polynomials is standard. We have not found an explicit occurrence of the exact divided-difference cube-integral form  in the literature.
  
    \begin{rem}
     The inductive definition of $H_s^\star(k_1,\cdots,k_r)$ is inspired by the following observation:
     For $s\in \mathbb{Z}^+$, the set 
     \[
     \Big{\{} (n_1,\cdots,n_r)\in \left( \mathbb{Z}^+\right)^r  \,\Big{|}\,n_1\geq \cdots \geq n_r\geq 1  \Big{\}}
     \]
     is the {\bf disjoint union} of the  subsets:
      \[
     \Big{\{} (n_1,\cdots,n_r)\in \left( \mathbb{Z}^+\right)^r  \,\Big{|}\,s\geq n_1\geq \cdots \geq n_r\geq 1  \Big{\}},
     \]
      \[
     \Big{\{} (n_1,\cdots,n_r)\in \left( \mathbb{Z}^+\right)^r  \,\Big{|}\,n_1\geq s+1, s\geq n_2\geq \cdots \geq n_r\geq 1  \Big{\}},
     \]
     \[\vdots\qquad \vdots \qquad \vdots\]
     \[
     \Big{\{} (n_1,\cdots,n_r)\in \left( \mathbb{Z}^+\right)^r  \,\Big{|}\,n_1\geq\cdots\geq n_{r-1}\geq  s+1, s \geq n_r\geq 1  \Big{\}},
     \]
      \[
     \Big{\{} (n_1,\cdots,n_r)\in \left( \mathbb{Z}^+\right)^r  \,\Big{|}\,n_1\geq \cdots \geq n_r\geq s+1  \Big{\}}.
     \]
    \end{rem}

    For $\mathrm{Re}(s)>0$, does the limit
 \[
 \mathop{\mathrm{lim}}_{r \rightarrow +\infty}   H_s^\star(k_1, \cdots,k_r) 
   \]  
   exist for any $(k_1,\cdots, k_r,\cdots)\in (\mathbb{Z}^+)^\infty$?  
 
   When $s$ is a complex number instead of positive integer, we have 
    \begin{Thm}\label{com}
    $(i)$ For $\mathrm{Re}(s)>0$,  there is a well-defined map
    \[\mathfrak{h}_s: \widehat{\mathcal{T}}\rightarrow \mathbb{C},
 \]
  \[
  {\bf k}=(k_1,\cdots,k_r,\cdots)\mapsto \mathfrak{h}_s({\bf{k}})=\mathop{\mathrm{lim}}_{r\rightarrow +\infty} H_s^{\star}(k_1,\cdots,k_r).
                \]
  $(ii)$ For $s\in \mathbb{R}$ and $s>1$, the above map induces a bijective map which we still refer to as $\mathfrak{h}_s$:
      \[\mathfrak{h}_s: \widehat{\mathcal{T}}\rightarrow (1,s],
 \]
  \[
  {\bf k}=(k_1,\cdots,k_r,\cdots)\mapsto \mathfrak{h}_s({\bf{k}})=\mathop{\mathrm{lim}}_{r\rightarrow +\infty} H_s^{\star}(k_1,\cdots,k_r).
                \]
 $(iii)$ For $\mathrm{Re}(s)>0$, the map $\mathfrak{h}_s$ has the following asymptotic behavior
    \[
    \mathop{\mathrm{lim}}_{\mathrm{Re} (s)\rightarrow +\infty} \mathfrak{h}_s\Big{|}_{\mathcal{T}}=\eta.
    \] 
    Here we assume that $|\mathrm{Im}(s)|$ is always bounded.
       \end{Thm}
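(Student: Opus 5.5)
The plan is to reduce all three parts to uniform estimates on the increments
\[
\Delta_r(s)=H_s^\star(k_1,\cdots,k_{r+1})-H_s^\star(k_1,\cdots,k_r),
\]
and to obtain these estimates from the integral representation of Theorem \ref{noint} $(ii)$. Writing $t^{s+r-1}$ through the Hermite--Genocchi formula, the divided difference $[1,X_1,\cdots,X_r]t^{s+r-1}$ equals $(s)_r\int_{\Delta_r}(u_0+u_1X_1+\cdots+u_rX_r)^{s-1}\,du$. Here $(s)_r=s(s+1)\cdots(s+r-1)$ and $\Delta_r$ is the standard simplex, so $\mathrm{vol}(\Delta_r)=1/r!$.

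\emph{Consistency check at $s=n$.} For $s=n\in\mathbb{Z}^+$ the statement reduces to Theorem \ref{int}, and the picture is transparent there. One has $H_n^\star(k_1,\cdots,k_{r+1})-H_n^\star(k_1,\cdots,k_r)\ge 0$, and the excess comes only from chains whose last index is $\ge 2$. Thus only chains ending in a long run of $1$'s survive, and the contribution of the others decays like $2^{-(\text{length of the tail})}$. I will use this as a model: the aim is to decompose $\Delta_r(s)$ in the same way using the inductive definition of $H_s^\star$. That definition splits each chain at the first index $j$ with $m_j\le s$, giving the pieces $G_M^\star(s;k_1,\cdots,k_j)\,H_s^\star(k_{j+1},\cdots,k_r)$.

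\emph{Step 1: part $(i)$.} I will prove by induction on $r$, using the simplex formula above, a bound
\[
|\Delta_r(s)|\le C(K,s)\,\theta(s)^{\,r-j_0(r)}
\]
on compact subsets $K$ of $\mathrm{Re}(s)>0$, with $\theta(s)<1$. Here $j_0(r)$ is the position where the final run of $k_i=1$ begins, and the case of an infinite run of $1$'s is treated separately via the explicit evaluation of $H_s^\star(k_1,\cdots,k_j,\{1\}^m)$ as $m\to\infty$. In the integral this corresponds to $X_{r}\to 0$ exponentially, since $X_r$ is a product of at least $r$ independent uniform variables. The divided difference then stabilises, by the Newton recursion $[1,X_1,\cdots,X_{r+1}]f=[1,X_1,\cdots,X_r]f+(X_{r+1}-\cdots)\,[\cdots]$. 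Summing the increments gives local uniform convergence of $H_s^\star(k_1,\cdots,k_r)$, so the limit $\mathfrak{h}_s$ exists and is even analytic in $s$. \textbf{I expect this uniform bound to be the main obstacle}: the factor $(s)_r/r!$ grows like $r^{\mathrm{Re}(s)-1}$. It must be beaten by the decay of the integrand near the accumulating nodes, and I would first try to get this by comparing with the real case $|t^{s-1}|=t^{\mathrm{Re}(s)-1}$ and then using positivity there.

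\emph{Step 2: part $(ii)$.} For real $s>1$ all the quantities are positive, and $H_s^\star(k_1,\cdots,k_r)$ is increasing in $r$ and strictly decreasing in each $k_i$. I would then copy the proof of Theorem \ref{int} and of the bijectivity of $\eta$ in \cite{lit}, working in the following order:
\begin{enumerate}[(a)]
\item compute the extreme values, namely $\mathfrak{h}_s(1,1,\cdots)=s$ (from the $\{1\}^m$ evaluation) and $\inf\mathfrak{h}_s=1$;
\item show that for fixed $(k_1,\cdots,k_j)$ the values $\mathfrak{h}_s(k_1,\cdots,k_j,k,\cdots)$, as $k$ runs over $\mathbb{Z}^+$, tile a half-open interval into consecutive half-open subintervals with no gaps or overlaps (the key identity being that the supremum over sequences with $k_{j+1}=k+1$ equals the infimum over those with $k_{j+1}=k$);
\item use that the lengths of the nested intervals tend to $0$.
\end{enumerate}
Injectivity and surjectivity onto $(1,s]$ then follow by a greedy, continued-fraction-like expansion.

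\emph{Step 3: part $(iii)$.} For fixed $r$ and $k_1\ge 2$, every $G_M^\star(s;\cdot)$ term tends to $0$ as $\mathrm{Re}(s)\to+\infty$ with $|\mathrm{Im}(s)|$ bounded, so $H_s^\star(k_1,\cdots,k_r)\to\zeta^\star(k_1,\cdots,k_r)$. The tail estimates of Step 1 are uniform in such $s$ when ${\bf k}\in\mathcal{T}$, because the condition defining $\mathcal{T}$ rules out the divergent pattern $(2,1,1,\cdots)$. Hence the limits in $r$ and in $s$ can be interchanged, which gives $\mathfrak{h}_s|_{\mathcal{T}}\to\eta$.
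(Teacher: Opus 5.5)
Your proposal has genuine gaps in all three steps. The central one is in Step 1.

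\textbf{Step 1 and Step 2(c).} The bound $|\Delta_r(s)|\le C\,\theta^{\,r-j_0(r)}$ cannot give $(i)$. Take a sequence with infinitely many entries $\ge 2$, such as $(2,2,2,\ldots)$. Whenever $k_{r+1}\neq 1$ the trailing run of $1$'s is empty, so $r-j_0(r)$ does not grow and the bound is not summable. The heuristic is backwards. At $s=n$ the increment is $\sum_{n\ge m_1\ge\cdots\ge m_{r+1}\ge 2}\prod_i m_i^{-k_i}\le\sum_{n\ge m_1\ge\cdots\ge m_{r+1}\ge 2}\prod_i m_i^{-1}$, which decays exponentially in $r$ uniformly in ${\bf k}$. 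Trailing $1$'s are the worst case, not the only decaying one.

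Your remark that $X_{r+1}$ is a product of at least $r+1$ uniform variables is the right mechanism, but you never turn it into an estimate. The ``Newton recursion'' you quote is not an identity as written, since both the order of the divided difference and the exponent of $t$ change. The missing step is Lemma \ref{plus1}, which gives
$\Delta_r(s)=\int_{(0,1)^{K_{r+1}}}\frac{X_{r+1}}{1-X_{r+1}}\big([1,X_1,\ldots,X_r]-[X_1,\ldots,X_{r+1}]\big)t^{s+r-1}\,dx$.
Hermite--Genocchi bounds the bracket by $C(s)\,r^{\sigma}(1+X_{r+1}^{\sigma-1})$ with $\sigma=\mathrm{Re}(s)$. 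Then $\int X_{r+1}^{m}\,dx=(m+1)^{-K_{r+1}}$ gives $|\Delta_r(s)|\le C(s)\,r^{\sigma}\big(2^{-r}+(1+\sigma)^{-r}\big)$ for \emph{every} ${\bf k}$ (Lemma \ref{dom}). This disposes of your ``main obstacle''.

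Uniformity in ${\bf k}$ is also what Step 2(c) needs. There you only assert that the nested interval lengths tend to $0$, yet this is the heart of surjectivity. The paper proves it as Lemma \ref{l2}, $m(Z_{k_1,\ldots,k_r}(s))\le m(Z_{\{1\}^r}(s))=s-H_s^\star(\{1\}^r)$, using monotonicity of $\Phi_s$ in each variable. Alternatively, when $k_r\ge 2$ the length is at most the increment $H_s^\star(k_1,\ldots,k_{r-1},1)-H_s^\star(k_1,\ldots,k_{r-1})$. So a ${\bf k}$-uniform increment bound, together with Lemma \ref{rlim} for eventually-$1$ sequences, would also work. Your non-uniform bound supports neither route.

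\textbf{Step 3.} The same defect appears in sharper form. The constants of your Step 1 estimate carry $|(s)_r|/r!$, which for fixed $r$ grows like $\mathrm{Re}(s)^r/r!$, so they are not uniform as $\mathrm{Re}(s)\to+\infty$. Saying that $\mathcal{T}$ ``rules out the divergent pattern'' is a heuristic, not an estimate. Some $\mathcal{T}$-specific input is unavoidable: for real $s$, $\mathfrak{h}_s(2,1,1,\ldots)=H_s^\star(1)\to\infty$.

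The paper avoids interchanging limits. By Lemma \ref{bgx}, for $k_1\ge 2$,
$H_s^\star(k_1,\ldots,k_r)-\zeta^\star(k_1,\ldots,k_r)=-\sum_{n\ge1}H_{s+n}^\star(k_2,\ldots,k_r)(s+n)^{-k_1}$,
and $|H_{s+n}^\star(k_2,\ldots,k_r)|$ is bounded independently of $r$:
\begin{itemize}
\item If $k_1=2$, membership in $\mathcal{T}$ gives a fixed $m$ with $(2,\{1\}^m)\succ(k_1,\ldots,k_r)$ for all $r$. Theorem \ref{rord} then yields $H_x^\star(k_2,\ldots,k_r)\le H_x^\star(\{1\}^m)\le(1+\log(1+x))^m$ for real $x>1$.
\item If $k_1\ge 3$, the bound $H_x^\star(k_2,\ldots,k_r)<x$ suffices.
\item The positive kernel of Lemma \ref{zexp} transfers these bounds to complex arguments via $|H_w^\star|\le 3H_{|w|}^\star$ (Lemma \ref{zlog}).
\end{itemize}
This gives an $r$-independent bound $B_s\to 0$. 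Letting $r\to\infty$ at fixed $s$, using $(i)$, then gives $|\mathfrak{h}_s({\bf k})-\eta({\bf k})|\le B_s$. Your Step 3 lacks any such $r$-uniform bound.
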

    
     We call  $\mathfrak{h}_s$ the finite Zeta-star correspondence.  By the Grothendieck period conjecture or the Kontsevich-Zagier  conjecture \cite{KZ}, it is widely believed that all the multiple zeta-star values are transcendental. Thus one of the fundamental questions in the theory of Zeta-star correspondence is to determine the infinite sequence $\eta^{-1}(x)\in \mathcal{T}$ for  a rational or algebraic number $x>1$.  As 
     \[
     \mathop{\mathrm{lim}}_{\mathrm{Re} (s)\rightarrow +\infty} \mathfrak{h}_s\Big{|}_{\mathcal{T}}=\eta
          \]
     Theorem \ref{com} may gives us a potential way to achieve this goal by the theory of complex analysis.
     
     Theorem \ref{com} gives us another way to interpret the theory of multiple zeta-star values. Roughly speaking, zeta-star correspondence can be viewed as a {\bf complex deformation} of binary expansion  at the {\bf infinite point}.
     
       By Remark \ref{nze}, one has  \[
         H_s^\star (k_1,\cdots,k_r)\neq 0    \]
       for $\mathrm{Re}(s)>0$. 
     In general, we believe that the following two conjectures are true.
  \begin{conj}
       For $(k_1,\cdots,k_p)\neq (l_1,\cdots,l_q)$, one has  \[
         H_s^\star (k_1,\cdots,k_p)-H_s^\star(l_1,\cdots,l_q)\neq 0    \]
       for $\mathrm{Re}(s)>1$.        \end{conj}
         \begin{conj}
For $\mathrm{Re}(s)>1$, the map  $\mathfrak{h}_s$ is injective.
     \end{conj}

     \begin{rem}
     The motivation of complex variations of multiple zeta-star values is inspired by Riemann's marvellous research \cite{rie} on the theory of the Riemann zeta function. Through the systematic application of complex analysis, Riemann provided profound insights on the distribution of primes. We hope that through the finite Zeta-star correspondence, we  can have a better understanding of the original Zeta-star correspondence.
     \end{rem}
     
     \begin{rem}
     Kumar \cite{kum} investigated the derived set of the set of multiple zeta values.  Li and Pan \cite{LP} obtained the derived sets of the set of some $q$-analogue multiple zeta values.
     \end{rem}
     
     \begin{rem}
     Using Newton series, for $\overrightarrow{\mu}=(\mu_1,\cdots,\mu_r)$, Kawashima \cite{kaw} defined the Kawashima function by 
     \[
     F_{ \overrightarrow{ \mu}}(z)=\sum_{n=0}^{\infty} (-1)^n (\Delta S_{ \overrightarrow{ \mu}} )(n) \binom{z}{n}, \quad \binom{z}{n}=\frac{z(z-1)\cdots (z-n+1)}{n!}.
     \]
     Here 
     \[
     S_{ \overrightarrow{ \mu}}(n)=H_n^\star(\mu_r,\cdots,\mu_1),\quad (\Delta S_{ \overrightarrow{ \mu}} )(n)=\sum_{k=0}^n(-1)^k\binom{n}{k}  S_{ \overrightarrow{ \mu}} (k).   \]
     Kawashima showed that
     \[
      F_{ \overrightarrow{ \mu}}(z)\big{|}_{z=n}=H_n^\star(\mu_r,\cdots, \mu_1), \quad n\in \mathbb{Z}^+  .   \]
      By the detailed analysis  of products of Kawashima functions \[
       F_{ \overrightarrow{ \mu}}(z)\cdot    F_{ \overrightarrow{ \nu}}(z)  \]
       at $z=0$, Kawashima \cite{kaw} found a class of relations among multiple zeta values.
       In fact, one can show that 
       \[
      F_{ \overrightarrow{ \mu}}(z)=H_z^\star(\mu_r,\cdots, \mu_1), \quad \mathrm{Re}(z)>0 \]
      by the fact that 
           \[
      F_{ \overrightarrow{ \mu}}(z)\big{|}_{z=n}=H_z^\star(\mu_r,\cdots, \mu_1)\big{|}_{z=n}, \quad n\in \mathbb{Z}^+    \]
      and 
      \[
        \Big{|}F_{ \overrightarrow{ \mu}}(z)-H_z^\star(\mu_r,\cdots, \mu_1)\Big{|}\leq C(1+|z|)^M,\quad   \mathrm{Re}(z)>0.   \]
     \end{rem}
     
     \section{Finite zeta-star correspondence in integer cases} \label{fzs}
     In this section, we will give a total order structure on the set of finite multiple star harmonic sums.   Theorem \ref{int} is proved as an application. The analysis in this section is similar to the analysis of multiple zeta-star values in \cite{lit}. The main references are \cite{lit,lid}.
     
     Denote by $$\widehat{\mathcal{S}}=\Big{\{}(k_1,\cdots,k_r)\,\Big{|}\,k_1,\cdots,k_r\in\left(\mathbb{Z}^{+}\right)^r, r\geq 1\Big{\}}.$$
     There is a total order structure $\succ$ on the set of $\widehat{\mathcal{S}}$ which is defined by:
     \[
     (k_1,\cdots,k_r,k_{r+1})\succ (k_1,\cdots,k_r  )
     \]
     and 
     \[
     (k_1,\cdots,k_p)\succ (l_1,\cdots,l_q)
     \]
     if $(k_1,\cdots,k_{i-1})=(l_1,\cdots,l_{i-1})$ and $k_i<l_i$ for some $i\leq \mathrm{min}\{p,q\}$.
      If \[
     (k_1,\cdots,k_p)\succ (l_1,\cdots,l_q), 
     \]
     we also write
     as  
\[(l_1,\cdots,l_q)\; \rotatebox[origin=c]{180}{\text{ $\succ$}} (k_1,\cdots,k_p).\]     
     \begin{lem}\label{many1}
     For $n\geq 2$ and $r\geq 1$, one has 
     \[
     \begin{split}
     & H_n^\star(\{1\}^r)     =\sum_{n\geq m_1\geq \cdots \geq m_r\geq 1}  \frac{1}{m_1\cdots m_r}          < n.         \\
     \end{split}
     \]
     \end{lem}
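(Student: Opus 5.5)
Our approach is to compute $H_n^\star(\{1\}^r)$ in closed form via a generating function, and then deduce the strict bound from it.

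Form the generating series
\[
\sum_{r\geq 0} H_n^\star(\{1\}^r)\,x^r=\prod_{m=1}^n \frac{1}{1-x/m}.
\]
This holds because choosing $n\geq m_1\geq\cdots\geq m_r\geq 1$ is the same as choosing a multiset of size $r$ from $\{1,\dots,n\}$. Each factor $(1-x/m)^{-1}=\sum_{j\ge0} x^j/m^j$ records how many times $m$ occurs.

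Next, expand the product in partial fractions. We have
\[
\prod_{m=1}^n \frac{1}{1-x/m}=\prod_{m=1}^n\frac{m}{m-x}=\sum_{m=1}^n \frac{c_m}{1-x/m},\qquad c_m=\prod_{j\neq m}\frac{j}{j-m}=(-1)^{m-1}\binom{n}{m}.
\]
Comparing coefficients of $x^r$ gives the closed form
\[
H_n^\star(\{1\}^r)=\sum_{m=1}^n (-1)^{m-1}\binom{n}{m}\,m^{-r}.
\]
This is equivalent to the divided difference identity of Theorem~\ref{noint}(ii). The alternating signs make this formula awkward for a direct estimate.

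For the inequality we therefore avoid the closed form and argue by monotonicity in $r$. Split the sum according to whether $m_r=1$ or $m_r\geq 2$:
\[
H_n^\star(\{1\}^r)=H_n^\star(\{1\}^{r-1})+\sum_{n\ge m_1\ge\cdots\ge m_r\ge2}\frac{1}{m_1\cdots m_r}.
\]
This shows that $H_n^\star(\{1\}^r)$ is strictly increasing in $r$. It is therefore enough to prove
\[
\lim_{r\to\infty} H_n^\star(\{1\}^r)=n.
\]
Then every term is strictly below its supremum $n$.

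To compute the limit, use the closed form. Every term with $m\geq 2$ tends to $0$ as $r\to\infty$, while the $m=1$ term is identically $\binom{n}{1}=n$. Hence the limit is $n$.

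Alternatively, one can avoid partial fractions entirely. Set $a_r=H_n^\star(\{1\}^r)$. By the same multiset count, the full series $\sum_r a_r x^r$ equals $\frac{1}{1-x}\prod_{m=2}^n\frac{m}{m-x}$. Its partial sums $\sum_{r\le R} a_r - a_{r-1}$ telescope to $a_R$. Abel's theorem, applied as $x\to1^-$ to $(1-x)\sum_r a_r x^r=\prod_{m=2}^n \frac{m}{m-x}$, gives $\lim a_R = \prod_{m=2}^n \frac{m}{m-1}=n$. This route requires the differences $a_r-a_{r-1}\ge0$ to be summable, which holds because they are positive and have bounded partial sums.

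The main care point is the strictness. It comes from the strict increase in $r$: for $n\geq 2$, the tuple $(2,\dots,2)$ contributes a positive amount to the $m_r\geq 2$ part. Combined with the limit being exactly $n$, this gives $H_n^\star(\{1\}^r)<n$ for every $r\geq 1$.
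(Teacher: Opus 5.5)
Your proof is correct, but it runs in the opposite direction from the paper's.

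The paper bounds the sum directly. It replaces the last factor $1/m_r$ by $1$; this is a strict increase on the terms with $m_r\ge 2$, which exist because $n\ge 2$. It then telescopes: $\sum_{m_r=1}^{m_{r-1}}1=m_{r-1}$ cancels $1/m_{r-1}$, and so on down to $\sum_{m_1=1}^{n}1=n$. No limit is involved. The limit $\lim_{r\to\infty}H_n^\star(\{1\}^r)=n$ is proved only afterwards, as Lemma \ref{inf1}(i), via $\prod_{m=2}^n(1-\frac1m)^{-1}=n$. That is your Abel variant in combinatorial form, since $a_r-a_{r-1}=h_r(\frac12,\dots,\frac1n)$.

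You instead first compute the limit exactly, from the closed form $H_n^\star(\{1\}^r)=\sum_{m=1}^n(-1)^{m-1}\binom nm m^{-r}$. Your partial-fraction computation, including $c_m=(-1)^{m-1}\binom nm$, is right. You then get strictness from strict monotonicity in $r$, using the same split $m_r=1$ versus $m_r\ge 2$ that the paper uses in Proposition \ref{or}.

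What each route buys:
\begin{itemize}
\item The paper's route is shorter and purely elementary. Its comparison-plus-telescoping step is exactly what is reused in Proposition \ref{or} and in computing $m(Z_{k_1,\dots,k_r}(n))$.
\item Your route costs more but proves Lemma \ref{inf1}(i) at the same time. It also gives an explicit rate, $n-H_n^\star(\{1\}^r)=\sum_{m=2}^n(-1)^m\binom nm m^{-r}=O_n(2^{-r})$, the integer counterpart of Lemma \ref{rlim}(i).
\end{itemize}

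Two minor remarks.
\begin{itemize}
\item The closed form is not really ``equivalent'' to Theorem \ref{noint}(ii). It is Lemma \ref{sint} applied with the $n$ nodes $1,\frac12,\dots,\frac1n$, i.e.\ $H_n^\star(\{1\}^r)=h_r(1,\frac12,\dots,\frac1n)=[1,\frac12,\dots,\frac1n]t^{r+n-1}$. This is harmless, since you never use it.
\item In the Abel variant no Tauberian care is needed. The function $\prod_{m=2}^n\frac{m}{m-x}$ is holomorphic on $|x|<2$, so its Taylor series can be evaluated at $x=1$ directly.
\end{itemize}
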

       \noindent{\bf Proof:} 
     It is clear that
     \[
     \begin{split}
     &\;\;\;\;H_n^\star(\{1\}^r)\\
     &<\sum_{n\geq m_1\geq \cdots \geq m_r\geq 1}  \frac{1}{m_1\cdots m_{r-1}}=\sum_{n\geq m_1\geq \cdots\geq m_{r-1}\geq 1}\frac{m_{r-1}}{ m_1\cdots m_{r-1}    }=\cdots=n. \\
         \end{split}
         \]
            
        $\hfill\Box$\\    
        \begin{lem}\label{inf1}
        $(i)$ For $n\geq 1$, 
        \[
        \mathop{\mathrm{lim}}_{r\rightarrow +\infty} H_n^\star(\{1\}^r) =n.        \]
        $(ii)$ \[
        \mathop{\mathrm{lim}}_{r\rightarrow +\infty} H_n^\star(k_1,\cdots,k_{l-1}, k_l+1,\{1\}^r) =H_n^\star(k_1,\cdots,k_{l-1}, k_l) .        \]
        \end{lem}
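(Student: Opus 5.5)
Both parts follow from one generating-function identity. For $|x|<m_{\min}$ one has
\[
\sum_{r\ge 0} H_n^\star(\{1\}^r)\,x^r=\prod_{m=1}^n\Big(1-\frac{x}{m}\Big)^{-1},
\]
because expanding each geometric series and collecting terms with multiplicities gives exactly the nonincreasing tuples $n\ge m_1\ge\cdots\ge m_r\ge1$.

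The cleaner route is a direct recursion rather than an asymptotic analysis of the coefficients. Split $H_m^\star(\{1\}^{r+1})$ according to the value of the largest index $m_1$:
\[
H_n^\star(\{1\}^{r+1})=\sum_{m=1}^n\frac1m H_m^\star(\{1\}^r).
\]
By Lemma \ref{many1} we have $H_m^\star(\{1\}^r)<m$ for $m\ge 2$, while $H_1^\star(\{1\}^r)=1$. So for each fixed $m$ the sequence $r\mapsto H_m^\star(\{1\}^r)$ is bounded. It is also increasing, since the set of tuples grows with $r$ and each term is at least $1$ after padding by $m_{r+1}=1$. Hence the limit $L_m$ exists.

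Passing to the limit in the recursion (a finite sum) gives $L_n=\sum_{m=1}^n L_m/m$, and therefore $L_n-L_{n-1}=L_n/n$. This leaves $L_n(1-1/n)=L_{n-1}$, i.e. $L_n=\frac{n}{n-1}L_{n-1}$ for $n\ge2$. With $L_1=1$ this yields $L_n=n$, which proves $(i)$.

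For $(ii)$ we split by the index $m_l$ attached to $k_l+1$. Writing the sum over $m_1\ge\cdots\ge m_l$, the tail in $\{1\}^r$ with largest index at most $m_l$ contributes $H_{m_l}^\star(\{1\}^r)$. So
\[
H_n^\star(k_1,\dots,k_l+1,\{1\}^r)=\sum_{n\ge m_1\ge\cdots\ge m_l\ge1}\frac{H_{m_l}^\star(\{1\}^r)}{m_1^{k_1}\cdots m_{l-1}^{k_{l-1}}m_l^{k_l+1}}.
\]
This is a finite sum, so by $(i)$ each $H_{m_l}^\star(\{1\}^r)\to m_l$. The factor $m_l$ cancels one power in $m_l^{k_l+1}$, and the limit is $H_n^\star(k_1,\dots,k_l)$.

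The only delicate point is the existence of $L_m$. The route above settles it through monotonicity together with the bound in Lemma \ref{many1}, so there is no real obstacle.
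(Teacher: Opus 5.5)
Your proof is correct. Part $(ii)$ is the paper's argument: factor the inner sum as $\frac{1}{m_l}H_{m_l}^\star(\{1\}^r)$ and apply $(i)$ termwise in a finite sum.

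For part $(i)$ you take a genuinely different route. The paper splits the index set according to the number $j$ of indices $m_1,\cdots,m_j$ that are $\geq 2$ (the rest being equal to $1$). This gives
\[
H_n^\star(\{1\}^r)=\sum_{j=0}^{r}\ \sum_{n\geq m_1\geq\cdots\geq m_j\geq 2}\frac{1}{m_1\cdots m_j},
\]
which is the $r$-th partial sum of the expansion of $\prod_{m=2}^{n}\left(1-\frac{1}{m}\right)^{-1}=n$. This is your generating function at $x=1$ with the singular factor $m=1$ divided out, so your unused first paragraph was one step from the paper's proof. That route yields existence and value of the limit at once and needs no a priori bound. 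It also recovers monotonicity and Lemma \ref{many1} as byproducts.

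Your route recurses on $n$ via the largest index. It gets existence of each $L_m$ from monotonicity plus the bound of Lemma \ref{many1}, and the limiting identity then forces $L_n=\frac{n}{n-1}L_{n-1}$. This produces the same telescoping factors $\frac{m}{m-1}$ one at a time. It avoids rearranging infinite series, but it depends on the a priori bound: without finiteness of the $L_m$ the limiting identity carries no information.

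Two small presentation points. First, the generating-function identity is never used and can be dropped. Second, the monotonicity claim is better stated as follows: the padded tuple $(m_1,\cdots,m_r,1)$ has the same summand as $(m_1,\cdots,m_r)$, and all other $(r+1)$-tuples contribute positive terms.
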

        \noindent{\bf Proof:} $(i)$ For $n=1$, this is trivial.
       For $n\geq 2$, as
       \[
       \begin{split}
       &\;\;\;\;     H_n^\star(\{1\}^r)    \\
       &=      \sum_{n\geq m_1\geq \cdots \geq m_r\geq 1}    \frac{1}{m_1\cdots m_r}      \\
       &= \left(   \sum_{\substack{n\geq m_1\geq \cdots \geq m_r\geq 1\\ m_1=1     }} +    \sum_{\substack{n\geq m_1\geq \cdots \geq m_r\geq 1\\ m_1\geq 2,m_2=1     }}     +\cdots+    \sum_{\substack{n\geq m_1\geq \cdots \geq m_r\geq 1\\ m_r\geq 2 }}    \right)          \frac{1}{m_1\cdots m_r}          \\
       &=1+\sum_{n\geq m_1\geq 2}\frac{1}{m_1}+\cdots+\sum_{n\geq m_1\geq \cdots\geq m_r\geq 2}\frac{1}{m_1\cdots m_r}   , \\
       \end{split}
       \]
       it follows that 
        \[
       \begin{split}
       &\;\;\;\;     \mathop{\mathrm{lim}}_{r\rightarrow +\infty}   H_n^\star(\{1\}^r)    \\
       &=1+\sum_{n\geq m_1\geq 2}\frac{1}{m_1}+\cdots+\sum_{n\geq m_1\geq \cdots\geq m_r\geq 2}\frac{1}{m_1\cdots m_r} +\cdots  \\
       &= \prod_{n\geq m\geq 2} \frac{1}{1-\frac{1}{m}}                        \\
       &=n.
       \end{split}
       \]
       $(ii)$ By definition, 
       \[
       \begin{split}
       &\;\;\;\;  H_n^\star(k_1,\cdots,k_{l-1}, k_l+1,\{1\}^r)        \\
       &= \sum_{n\geq m_1\geq \cdots\geq m_{l}\geq m_{l+1}\geq \cdots \geq m_{l+r}\geq 1} \frac{1}{m_1^{k_1}\cdots m_{l-1}^{k_{l-1}}m_l^{k_l+1} m_{l+1}\cdots m_{l+r}}         \\
       &= \sum_{n\geq m_1\geq \cdots\geq m_{l}\geq 1} \frac{1}{m_1^{k_1}\cdots m_l^{k_l} } \cdot \frac{1}{m_l}  \sum_{m_l\geq m_{l+1}\geq \cdots \geq m_{l+r}\geq 1} \frac{1}{m_{l+1}\cdots m_{l+r}}.                \\
       \end{split}
       \]
       By $(i)$,
       \[
         \mathop{\mathrm{lim}}_{r\rightarrow +\infty}  \frac{1}{m_l}  \sum_{m_l\geq m_{l+1}\geq \cdots \geq m_{l+r}\geq 1} \frac{1}{m_{l+1}\cdots m_{l+r}}=1.      \]
         Thus \[
        \mathop{\mathrm{lim}}_{r\rightarrow +\infty} H_n^\star(k_1,\cdots,k_{l-1}, k_l+1,\{1\}^r) =H_n^\star(k_1,\cdots,k_{l-1}, k_l) .        \]        $\hfill\Box$\\          
            \begin{prop} \label{or}For $n\geq 2$, there is a total order structure on the set of multiple star  harmonic sums:
     If   $(k_1,\cdots,k_p)\succ (l_1,\cdots,l_q)  $, then
     \[   H_n^\star(k_1,\cdots,k_p)>H_n^\star(l_1,\cdots,l_q).   \]   
     \end{prop}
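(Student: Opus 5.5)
The plan is to prove the two defining clauses of $\succ$ separately: first the extension case, then the lexicographic case. The second case reduces to the first together with Lemma \ref{inf1} $(ii)$.

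\emph{Extension case.} We show that $H_n^\star(k_1,\cdots,k_r,k_{r+1})>H_n^\star(k_1,\cdots,k_r)$. Separating the innermost summation gives
\[
H_n^\star(k_1,\cdots,k_{r+1})=\sum_{n\geq m_1\geq\cdots\geq m_r\geq 1}\frac{1}{m_1^{k_1}\cdots m_r^{k_r}}\sum_{m_r\geq m_{r+1}\geq 1}\frac{1}{m_{r+1}^{k_{r+1}}}.
\]
The inner sum is always $\geq 1$, because it contains the term $m_{r+1}=1$. It is strictly greater than $1$ whenever $m_r\geq 2$. Since $n\geq 2$, the tuple $m_1=\cdots=m_r=2$ occurs in the outer sum, so the inequality is strict. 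Iterating, $H_n^\star$ is strictly increasing along extensions, and the sequence $r\mapsto H_n^\star(k_1,\cdots,k_l,\{1\}^r)$ is strictly increasing in $r$.

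\emph{Lexicographic case.} Suppose $(k_1,\cdots,k_{i-1})=(l_1,\cdots,l_{i-1})$ and $k_i<l_i$ for some $i\leq\min\{p,q\}$, so that $l_i\geq k_i+1$. We need two auxiliary facts.
\begin{itemize}
\item[(a)] \emph{Monotonicity in exponents.} Since $m\geq 1$, we have $1/m^{l}\leq 1/m^{k}$ whenever $l\geq k$. Comparing the two sums term by term, lowering any exponent does not decrease $H_n^\star$.
\item[(b)] \emph{Upper bound.} By (a), replacing $l_i$ by $k_i+1$ and each of $l_{i+1},\cdots,l_q$ by $1$ gives
\[
H_n^\star(l_1,\cdots,l_q)\leq H_n^\star(k_1,\cdots,k_{i-1},k_i+1,\{1\}^{q-i}).
\]
\end{itemize}
The right-hand side of (b) is a term of the strictly increasing sequence $r\mapsto H_n^\star(k_1,\cdots,k_{i-1},k_i+1,\{1\}^r)$. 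By Lemma \ref{inf1} $(ii)$, this sequence converges to $H_n^\star(k_1,\cdots,k_i)$. A strictly increasing sequence lies strictly below its limit, so
\[
H_n^\star(l_1,\cdots,l_q)<H_n^\star(k_1,\cdots,k_i).
\]
Finally, the extension case gives $H_n^\star(k_1,\cdots,k_i)\leq H_n^\star(k_1,\cdots,k_p)$, with equality when $p=i$. Combining the two inequalities gives the claim.

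The main obstacle is obtaining a \emph{strict} inequality in the lexicographic case, since (b) is only a weak bound. This is handled by placing the strictness in the limit step: the sequence in Lemma \ref{inf1} $(ii)$ is strictly increasing by the extension case, which uses $n\geq 2$ to produce a term with $m_r\geq 2$.
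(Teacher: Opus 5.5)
Your proof is correct. The extension case is the same as the paper's: the paper splits off the terms with $m_{r+1}\geq 2$, and you observe that the inner sum exceeds $1$ once $m_r\geq 2$.

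The lexicographic case obtains strictness by a different route.

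\emph{The paper's route.} It bounds $H_n^\star(l_1,\cdots,l_q)\leq H_n^\star(l_1,\cdots,l_i,\{1\}^{q-i})$. It then applies the telescoping estimate behind Lemma \ref{many1}: $\sum_{m_i\geq m_{i+1}\geq\cdots\geq m_q\geq 1}\frac{1}{m_{i+1}\cdots m_q}\leq m_i$, with strict inequality for $m_i\geq 2$. This gives the finite strict bound $H_n^\star(l_1,\cdots,l_q)<H_n^\star(l_1,\cdots,l_{i-1},l_i-1)\leq H_n^\star(k_1,\cdots,k_i)$.

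\emph{Your route.} You lower $l_i$ to $k_i+1$ and the tail to $1$'s. Strictness then comes from the fact that $r\mapsto H_n^\star(k_1,\cdots,k_{i-1},k_i+1,\{1\}^r)$ increases strictly to the limit $H_n^\star(k_1,\cdots,k_i)$ supplied by Lemma \ref{inf1} $(ii)$. There is no circularity, since Lemma \ref{inf1} rests only on the Euler-product evaluation $\lim_r H_n^\star(\{1\}^r)=n$.

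\emph{What each buys.} The paper's argument is purely finitary and needs only the crude inequality $H_m^\star(\{1\}^r)\leq m$. Yours needs the exact limit. In exchange, yours exhibits $H_n^\star(k_1,\cdots,k_i)$ as the supremum of all sums indexed lexicographically below it. That is exactly the structure the surjectivity argument later exploits through the intervals $Z_{k_1,\cdots,k_r}(n)$.

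Your reduction also has the same shape as the paper's proof of Theorem \ref{rord} for real $s>1$. The difference is that there the strict step, Lemma \ref{rea1} $(ii)$, is proved directly from the integral representation rather than by passing to a limit.
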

           \noindent{\bf Proof:} 
For $n\geq 2$, by definition, 
\[
\begin{split}
&\;\;\;\;    H_n^\star(k_1,\cdots,k_r,k_{r+1})        \\
&=  \sum_{n\geq m_1\geq \cdots\geq m_r\geq m_{r+1}\geq 1}    \frac{1}{m_1^{k_1}\cdots m_r^{k_r}m_{r+1}^{k_{r+1}} }           \\
&= \left(\sum_{\substack{n\geq m_1\geq \cdots\geq m_r\geq m_{r+1}\geq 1\\ m_{r+1}=1}} +\sum_{\substack{n\geq m_1\geq \cdots\geq m_r\geq m_{r+1}\geq 1\\ m_{r+1}\geq 2}}     \right)\frac{1}{m_1^{k_1}\cdots m_r^{k_r}m_{r+1}^{k_{r+1}} }           \\
&=H_n^{\star}(k_1,\cdots,k_r)+\sum_{n\geq m_1\geq \cdots\geq m_r\geq m_{r+1}\geq 2}    \frac{1}{m_1^{k_1}\cdots m_r^{k_r}m_{r+1}^{k_{r+1}} }  .   \end{split}
\]
So we have 
\[
 H_n^\star(k_1,\cdots,k_r,k_{r+1})  >H_n^{\star}(k_1,\cdots,k_r).
  \]
  For $$(k_1,\cdots,k_p)\succ (l_1,\cdots,l_q), $$
  we assume that $(k_1,\cdots,k_{i-1})=(l_1,\cdots,l_{i-1})$ and $k_i<l_i$ for some $i\leq \mathrm{min}\{p,q\}$.
  Then by Lemma \ref{many1},
  \[
  \begin{split}
  &\;\;\;\;  H_n^\star(l_1,\cdots,l_q)     \\
  &\leq  H_n^\star(l_1,\cdots,l_{i-1},l_i,\{1\}^{q-i})        \\
  &\leq \sum_{n\geq m_1\geq \cdots\geq m_i\geq 1} \frac{1}{m_1^{l_1}\cdots m_i^{l_i}} \cdot \sum_{m_i\geq m_{i+1}\geq \cdots \geq m_q\geq 1} \frac{1}{m_{i+1}\cdots m_q}\\
  &<\sum_{n\geq m_1\geq \cdots\geq m_i\geq 1} \frac{m_i}{m_1^{l_1}\cdots m_i^{l_i}} =H_n^\star(l_1,\cdots,l_{i-1},l_i-1).    \\
    \end{split}
  \]
  Since $H_n^\star(l_1,\cdots,l_{i-1},l_i-1)\leq H_n^\star(k_1,\cdots,k_i)\leq H_n^\star(k_1,\cdots,k_p)$,
  we have 
  \[
   H_n^\star(k_1,\cdots,k_p)>H_n^\star(l_1,\cdots,l_q).  \]
 $\hfill\Box$\\    
 
 Now we are ready to prove Theorem \ref{int}.
 For $(k_1,\cdots,k_r,\cdots)\in \widehat{\mathcal{T}}=(\mathbb{Z}^+)^\infty$ and $n\geq 2$, define 
 \[
 a_r=H_n^\star(k_1,\cdots,k_r).
 \]
 By Proposition \ref{or}, we have $a_r<a_{r+1}$. By Lemma \ref{many1}, 
 \[
 a_r\leq H_n^\star(\{1\}^r)<n.
 \]
 As a result, the sequence $\{a_r\,|\,r\geq 1\}$ is an increasing bounded sequence, thus the limit 
 \[
 \mathop{\mathrm{lim}}_{r\rightarrow +\infty} H_n^{\star}(k_1,\cdots,k_r)  \]
 exists and the map 
  \[\mathfrak{h}_n: \widehat{\mathcal{T}}\rightarrow (1, n],
 \]
 \[
  {\bf k}=(k_1,\cdots,k_r,\cdots)\mapsto \mathfrak{h}_n({\bf{k}})=\mathop{\mathrm{lim}}_{r\rightarrow +\infty} H_n^{\star}(k_1,\cdots,k_r).
                \]
                is well-defined. It suffices to show that $\mathfrak{h}_n$ is bijective.
                
                For ${\bf k},{\bf l}\in \widehat{\mathcal{T}}$ and $ {\bf k}\neq {\bf l} $, without loss of generality, we assume that 
                \[
                (k_1,\cdots,k_i)=(l_1,\cdots,l_i), \quad k_{i+1}>l_{i+1}.
                \]
                By Proposition \ref{or}, for $r>i+1$, we have 
              \[ H_n^\star(k_1,\cdots, k_i, k_{i+1},\cdots k_r) <H_n^\star(k_1,\cdots, k_i, k_{i+1}-1) \leq H_n^\star(l_1,\cdots,l_i,l_{i+1}).  \]    
              Therefore
              \[
              \mathfrak{h}_n({\bf{k}})=\mathop{\mathrm{lim}}_{r\rightarrow +\infty} H_n^{\star}(k_1,\cdots,k_r)\leq    H_n^\star(l_1,\cdots,l_i,l_{i+1}).            \]  
              On the other hand 
              \[
              H_n^\star(l_1,\cdots,l_i,l_{i+1})<      \mathfrak{h}_n({\bf{l}})=\mathop{\mathrm{lim}}_{r\rightarrow +\infty} H_n^{\star}(l_1,\cdots,l_r)  .        \]   
            In conclusion, we have     
            \[
              \mathfrak{h}_n({\bf{k}})     <  \mathfrak{h}_n({\bf{l}}).      \]    
            So the map $\mathfrak{h}_n$ is injective.
            
            To show that $\mathfrak{h}_n$ is surjective, for $r\geq 1$, define 
            \[
            Z_{\{1\}^r}(n)=\Big{(}H_n^\star(\{1\}^r), n\Big{]},
            \]
            \[
            Z_{k_1,\cdots,k_r}(n)=\Big{(}H_n^\star(k_1,\cdots,k_r), H_n^\star(k_1,\cdots ,k_{i-1},k_i-1)  \Big{]}
            \]
  for $(k_1,\cdots,k_r)\neq (\{1\}^r)$ and $$k_i\geq 2,k_{i+1}=\cdots=k_r=1.$$
  From the total order structure of multiple star harmonic sums, we have
    \[
  Z_{k_1,\cdots,k_r}(n)\bigcap Z_{l_1,\cdots,l_r}(n)=\varnothing,\quad (k_1,\cdots,k_r)\neq (l_1,\cdots,l_r),
  \]
  \[
  (1,n]=\bigcup_{(k_1,\cdots,k_r)\in (\mathbb{Z}^+)^r}  Z_{k_1,\cdots,k_r}(n), \quad r\geq 1, \]
  \[
  Z_{k_1,\cdots,k_r}(n)=\bigcup_{(k_{r+1},\cdots,k_{r+p})\in (\mathbb{Z}^+)^p} Z_{k_1,\cdots,k_r,k_{r+1},\cdots,k_{r+p}}(n)   . \]
  
  \begin{lem}
  For $S\subseteq\mathbb{R}$, denote by $m(S)$ the Lebesgue measure of $S$.
  If $r\geq 1$ and $(k_1,\cdots,k_r)\in (\mathbb{Z}^+)^r$, then
  \[
  m\left(     Z_{k_1,\cdots,k_r}(n)  \right)=\sum_{n\geq m_1\geq \cdots \geq m_r\geq 1}\frac{m_r-1}{m_1^{k_1}\cdots m_r^{k_r}}.
  \]
  \end{lem}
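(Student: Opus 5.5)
We split into two cases according to the shape of $(k_1,\cdots,k_r)$, and in each case compute the length of the half-open interval $Z_{k_1,\cdots,k_r}(n)$ directly from the definitions.

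\emph{Case $(k_1,\cdots,k_r)=(\{1\}^r)$.} Here $m(Z_{\{1\}^r}(n))=n-H_n^\star(\{1\}^r)$. The computation in the proof of Lemma \ref{many1} gives the telescoping identity
\[
\sum_{n\geq m_1\geq\cdots\geq m_{r}\geq 1}\frac{m_r}{m_1\cdots m_r}=H_n^\star(\{1\}^{r-1}),
\]
with the convention $H_n^\star(\varnothing)\cdot$(sum over $m_1$ of $m_1/m_1$)$=n$ when $r=1$. Iterating this identity down to $r=1$ yields
\[
\sum_{n\geq m_1\geq\cdots\geq m_r\geq 1}\frac{m_r}{m_1\cdots m_r}=n .
\]
Subtracting $H_n^\star(\{1\}^r)$ from both sides then gives
\[
\sum_{n\geq m_1\geq\cdots\geq m_r\geq 1}\frac{m_r-1}{m_1\cdots m_r}=n-H_n^\star(\{1\}^r),
\]
which is the claim in this case.

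\emph{General case.} Let $k_i\geq 2$ and $k_{i+1}=\cdots=k_r=1$. The length is
\[
m\left(Z_{k_1,\cdots,k_r}(n)\right)=H_n^\star(k_1,\cdots,k_{i-1},k_i-1)-H_n^\star(k_1,\cdots,k_r).
\]
The plan is to write both terms as sums over $n\geq m_1\geq\cdots\geq m_r\geq 1$ with the same denominator $m_1^{k_1}\cdots m_r^{k_r}$.

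1. Expand $H_n^\star(k_1,\cdots,k_i-1)$ as
\[
\sum_{n\geq m_1\geq\cdots\geq m_i\geq1}\frac{m_i}{m_1^{k_1}\cdots m_i^{k_i}} .
\]

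2. Apply the identity from the first case with $n$ replaced by $m_i$ and depth $r-i$:
\[
m_i=\sum_{m_i\geq m_{i+1}\geq\cdots\geq m_r\geq1}\frac{m_r}{m_{i+1}\cdots m_r}.
\]
If $i=r$ this step is vacuous, since the factor $m_r=m_i$ is already present. Substituting it into step 1 expresses $H_n^\star(k_1,\cdots,k_i-1)$ as
\[
\sum_{n\geq m_1\geq\cdots\geq m_r\geq1}\frac{m_r}{m_1^{k_1}\cdots m_r^{k_r}} .
\]

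3. Subtracting $H_n^\star(k_1,\cdots,k_r)$ termwise then gives exactly
\[
\sum_{n\geq m_1\geq\cdots\geq m_r\geq1}\frac{m_r-1}{m_1^{k_1}\cdots m_r^{k_r}} .
\]

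The only step needing care is the iterated telescoping identity $\sum_{m\geq m_{1}\geq\cdots\geq m_p\geq1}\frac{m_p}{m_1\cdots m_p}=m$ for $p\geq 0$. We prove it by induction on $p$: summing over the last variable $m_p$ cancels the numerator $m_p$ against $m_p$ in the denominator, leaving a factor $m_{p-1}$, which is precisely the inductive structure already used in Lemma \ref{many1}. Everything after that identity is bookkeeping of indices.
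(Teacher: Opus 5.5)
Your argument is correct and is essentially the paper's proof: both show that $H_n^\star(k_1,\cdots,k_r)$ plus the length of $Z_{k_1,\cdots,k_r}(n)$ equals $\sum_{n\geq m_1\geq\cdots\geq m_r\geq 1} m_r/(m_1^{k_1}\cdots m_r^{k_r})$. They do this by telescoping away the trailing $1$'s, down to $n$ or to $H_n^\star(k_1,\cdots,k_{i-1},k_i-1)$. One slip: the one-step identity in your first case should read $\sum_{n\geq m_1\geq\cdots\geq m_r\geq 1}\frac{m_r}{m_1\cdots m_r}=\sum_{n\geq m_1\geq\cdots\geq m_{r-1}\geq 1}\frac{m_{r-1}}{m_1\cdots m_{r-1}}$, not $H_n^\star(\{1\}^{r-1})$ (for $n=r=2$ the left side is $2$ while $H_2^\star(1)=3/2$), and the odd convention for $r=1$ is then unnecessary; your final paragraph already states the induction correctly.
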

    \noindent{\bf Proof:}   
    By definition, it suffices to show that
    \[
    H_n^\star(k_1,\cdots,k_r)+m\left(     Z_{k_1,\cdots,k_r}(n)  \right)=  \sum_{n\geq m_1\geq \cdots \geq m_r\geq 1}\frac{m_r}{m_1^{k_1}\cdots m_r^{k_r}}.
  \]
  For $(k_1,\cdots,k_r)=(\{1\}^r)$, 
  \[
  \begin{split}
  &\;\;\;\;    \sum_{n\geq m_1\geq \cdots \geq m_r\geq 1}\frac{m_r}{m_1\cdots m_r}     \\
  &=   \sum_{n\geq m_1\geq \cdots \geq m_r\geq 1}\frac{1}{m_1\cdots m_{r-1}}            \\
  &=\sum_{n\geq m_1\geq \cdots \geq m_{r-1}\geq 1}\frac{m_{r-1}}{m_1\cdots m_{r-1}}            \\
  &\qquad\cdots\\
  &=\sum_{n\geq m_1\geq 1}\frac{m_1}{m_1}\\
  &=n\\
  &=    H_n^\star(\{1\}^r)+m\left(     Z_{\{1\}^r}(n)  \right).    
    \end{split}
  \]
   For $(k_1,\cdots,k_r)\neq (\{1\}^r)$ and $$k_i\geq 2,k_{i+1}=\cdots=k_r=1,$$
   similarly, we have 
   \[
   \begin{split}
   &\;\;\;\;    \sum_{n\geq m_1\geq \cdots \geq m_r\geq 1}\frac{m_r}{m_1^{k_1}\cdots  m_i^{k_i} m_{i+1}\cdots m_r}  \\
   &=   \sum_{n\geq m_1\geq \cdots \geq m_{r-1}\geq 1}\frac{m_{r-1}}{m_1^{k_1}\cdots  m_i^{k_i} m_{i+1}\cdots m_{r-1}}            \\
   &\qquad\cdots  \\
   &=  \sum_{n\geq m_1\geq \cdots \geq m_{r-1}\geq 1}\frac{m_i}{m_1^{k_1}\cdots  m_i^{k_i} }              \\
   &= H_n^\star(k_1,\cdots,k_r)+m\left(     Z_{k_1,\cdots,k_r}(n)  \right).       \end{split}
   \]
     $\hfill\Box$\\     
    
    For $x\in (1,n]$, if $x=n$, by Lemma \ref{inf1}, one has 
    \[
    x=n=\mathfrak{h}_n(({\{1\}^{\infty}}))   .
     \]
     If $x=H_n^\star(k_1,\cdots,k_r)$, by Lemma \ref{inf1}, one has 
     \[
     x=H_n^\star(k_1,\cdots,k_r)  =\mathfrak{h}_n\left( (k_1,\cdots, k_{r-1}, k_r+1,\{1\}^{\infty}) \right) .  \]
     If $$x\neq H_n^\star(k_1,\cdots,k_r)$$ for all $(k_1,\cdots,k_r)\in \left( \mathbb{Z}^+  \right)^r $, $r\geq 1$, from the basic properties of $$Z_{k_1,\cdots,k_r}(n),$$ there is a 
     \[
     {\bf k}=(k_1,\cdots,k_r,\cdots)\in \widehat{\mathcal{T}}
     \]
     such that
     \[
     x\in Z_{k_1,\cdots,k_r}(n)
     \]
     for all $r\geq 1$.
     As a result,
     \[
              H_n^\star (k_1,\cdots,k_r)  <x\leq  H_n^\star (k_1,\cdots,k_r) +   m\left(     Z_{k_1,\cdots,k_r}(n)  \right).   \]
              Since 
              \[
              \begin{split}
              &\;\;\;\; m\left(     Z_{k_1,\cdots,k_r}(n)  \right)\\
              &= \sum_{n\geq m_1\geq \cdots \geq m_r\geq 1}\frac{m_r-1}{m_1^{k_1}\cdots m_r^{k_r}}
      \\
      &\leq      \sum_{n\geq m_1\geq \cdots \geq m_r\geq 1}\frac{m_r-1}{m_1\cdots m_r}=n-H_n^\star\left(\{1\}^r\right)   ,        \\
              \end{split}              
              \]
              we have 
              \[
              \mathop{\mathrm{lim}}_{r\rightarrow +\infty} m\left(     Z_{k_1,\cdots,k_r}(n)  \right) =   \mathop{\mathrm{lim}}_{r\rightarrow +\infty} \left(  n-H_n^\star\left(\{1\}^r\right)   \right) =0.           \]
              Therefore,
              \[
              x= \mathop{\mathrm{lim}}_{r\rightarrow +\infty} H_n^\star (k_1,\cdots,k_r) =\mathfrak{h}_n({\bf k}).                            \]
  In a word,  the map $\mathfrak{h}_n: \widehat{\mathcal{T}}\rightarrow  (1,n]   $ is bijective.

    \section{Analytic continuation of finite multiple star harmonic sums}\label{ac}
    In this section, we will review the theory of divided differences. For more references, see \cite{boo},  \cite{boo1}.  By using the divided differences, we will give the integral representation of $$H_s^\star(k_1,\cdots,k_r).$$
  As an application,   we will show that $H_s^\star(k_1,\cdots,k_r)$ is well-defined and analytic on $\mathrm{Re}(s)>0$. In this paper, for $t>0$ and $s\in\mathbb{C}$, the function $t^s$ is defined by  
  \[t^s=e^{s\,\mathrm{log}\,t}.\]
  
  \begin{Def}\label{div}
  The divided differences of $f(t)$ relative to a sequence of distinct points $x_0,x_1,x_2,\cdots$ are defined by 
  \[
  [x_0]f=f(x_0), \]
  \[
  [x_0,x_1]f=\frac{[x_1]f-[x_0]f        }{x_1-x_0},
  \]
  \[
  [x_0,x_1,x_2]f=\frac{[x_1,x_2]f-[x_0,x_1]f      }{x_2-x_0}.
  \]
  \[
  \cdots
  \]
  Inductively, the divided difference of order $n$ is given by 
  \[
  [x_0,x_1,\cdots,x_n]f=\sum_{i=0}^n\frac{f(x_i)}{\prod_{\substack{   0\leq j \leq n \\ j\neq i  }    }     (x_i-x_j)     }.
  \]
  By the above explicit formula, 
  \[
  [x_0,x_1,\cdots,x_n]f  \]
  is symmetric with respect to $x_0,x_1,\cdots,x_n$. More precisely, 
  \[
   [x_0,x_1,\cdots,x_n]f  =[x_{\sigma(0)},x_{\sigma(1)},\cdots, x_{\sigma(n)}]f
     \]
     for any permutation $\sigma$ of the set $\{0,1,\cdots,n\}$.
  If some points coincide, the above expression is understood as the continuous extension of divided differences.
  \end{Def}
  
  For the function with  sufficient differentiability, the divided difference has the following geometric explanation, which is known as Hermite-Genocchi formula. For the history of Hermite-Genocchi formula, one can consult the survey \cite{boo1}.
  \begin{Thm}\label{hg} (Hermite-Genocchi) Let $I\subseteq  \mathbb{R}$ be an open interval. For $f\in C^n(I)$, denote by 
  \[
  S_n=\big{\{}(u_1,\cdots,u_n)\in \mathbb{R}^n\,\big{|}\, u_j\geq 0, \sum_{j=1}^nu_j\leq 1\big{\}}.
  \]
  Then 
  \[
  [x_0,\cdots, x_n]f=\int_{S_n} f^{(n)}(u_0x_0+u_1x_1+\cdots+u_nx_n) du_1\cdots du_n.
  \]
  Here
  \[
  u_0=1-\sum_{j=1}^nu_j.
  \]
  \end{Thm}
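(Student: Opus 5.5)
We prove the Hermite–Genocchi formula (Theorem \ref{hg}) by induction on $n$, taking as the base case the formula for distinct points. By the continuity convention of Definition \ref{div}, the coincident-point case then follows by continuity in the points. This works because the right-hand side is manifestly continuous in $(x_0,\ldots,x_n)\in I^{n+1}$: the point $\sum u_jx_j$ lies in the convex hull of the $x_j$, hence in $I$, and $f^{(n)}$ is continuous on compact subsets of $I$.

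For $n=1$ we have $S_1=[0,1]$ and
\[
\int_0^1 f'\bigl(x_0+u_1(x_1-x_0)\bigr)\,du_1=\frac{f(x_1)-f(x_0)}{x_1-x_0}=[x_0,x_1]f
\]
by the fundamental theorem of calculus.

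For the inductive step, assume the formula holds for $n-1$ and for all $C^{n-1}$ functions. Take distinct $x_0,\ldots,x_n$. Write the integrand as $f^{(n)}\bigl(x_0+\sum_{j=1}^n u_j(x_j-x_0)\bigr)$ and integrate first in $u_n$ over $0\le u_n\le 1-\sum_{j<n}u_j$. The fundamental theorem gives
\[
\frac{1}{x_n-x_0}\Bigl[f^{(n-1)}\Bigl(x_n+\sum_{j=1}^{n-1}u_j(x_j-x_n)\Bigr)-f^{(n-1)}\Bigl(x_0+\sum_{j=1}^{n-1}u_j(x_j-x_0)\Bigr)\Bigr].
\]
Here the upper limit places the point at $(1-\sum_{j<n}u_j)x_n+\sum_{j<n}u_jx_j$. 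Integrating both terms over $S_{n-1}$ and applying the induction hypothesis to $f\in C^{n-1}(I)$ turns them into $[x_n,x_1,\ldots,x_{n-1}]f$ and $[x_0,x_1,\ldots,x_{n-1}]f$.

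By the symmetry noted in Definition \ref{div}, $[x_n,x_1,\ldots,x_{n-1}]f=[x_1,\ldots,x_n]f$. Hence the integral equals
\[
\frac{[x_1,\ldots,x_n]f-[x_0,\ldots,x_{n-1}]f}{x_n-x_0},
\]
which is the recursive definition of $[x_0,\ldots,x_n]f$ extended to order $n$. It agrees with the explicit sum formula quoted in Definition \ref{div}.

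The main obstacle is bookkeeping rather than substance, and it has two parts. First, the recursion in Definition \ref{div} is written only for low orders, so we need its general form $[x_0,\ldots,x_n]f=\frac{[x_1,\ldots,x_n]f-[x_0,\ldots,x_{n-1}]f}{x_n-x_0}$. This follows from the explicit sum formula by comparing the coefficient of each $f(x_i)$ on the two sides, a short partial-fraction identity. Second, after the $u_n$-integration the upper-limit term must be correctly identified as a simplex integral based at $x_n$.
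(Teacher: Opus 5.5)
Your proof is correct and is essentially the paper's argument: induction on $n$, using the recursion for divided differences together with the fundamental theorem of calculus. The paper runs it in the opposite direction (recursion in $x_{n-1},x_n$, then Newton--Leibniz, then the substitution $y_{n-1}=u_{n-1}(1-s)$, $y_n=u_{n-1}s$ to rebuild $S_n$), whereas you integrate out $u_n$ directly, which avoids that change of variables, and you also make explicit the passage to coincident points by continuity, which the paper leaves implicit.
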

   \noindent{\bf Proof:} 
   For $n=1$, it is clear that 
   \[
   [x_0,x_1]f=\frac{f(x_1)-f(x_0)}{x_1-x_0}=\int^1_0f^\prime \left( (1-u)x_0+ux_1    \right)du.
   \]
  For $n\geq 2$, by  induction, 
   \[
   [x_0,\cdots, x_{n-1}]f=\int_{S_{n-1}} f^{(n-1)}(u_0x_0+\cdots +u_{n-1}x_{n-1})du_1\cdots du_{n-1}.
   \]
   Here $$u_0=1-u_1-\cdots-u_{n-1}.$$
   By the definition and  the symmetry of divided difference,
   \[
   \begin{split}
   &\;\;\;\;[x_0,\cdots,x_n]f  \\
  & =\frac{ [x_0,\cdots,x_{n-2},x_n]f-  [x_0,\cdots,x_{n-2},x_{n-1}]f  }{x_n-x_{n-1}} \\
  &= \int_{S_{n-1}} \frac{  F(u_1,\cdots, u_{n-2},x_n)-  F(u_1,\cdots, u_{n-2},x_{n-1})  }{x_n-x_{n-1}} du_1\cdots du_{n-1}  ,         \\
  \end{split}
   \]
   which 
   \[
   F(u_1,\cdots, u_{n-2}, y)=f^{(n-1)}\left(  u_0x_0+\cdots +u_{n-2}x_{n-2}+u_{n-1}y    \right).
   \]
   For fixed $(u_0,u_1,\cdots, u_{n-1})$, denote by
   \[
   A=u_0x_0+\cdots +u_{n-2}x_{n-2}+u_{n-1}x_{n-1},
   \]
   \[
   B=u_0x_0+\cdots+u_{n-2}x_{n-2}+u_{n-1} x_n,
   \]
   then
   \[
   B-A=u_{n-1}(x_n-x_{n-1}).
   \]
   By the Newton-Leibniz formula, one has 
   \[
   f^{(n-1)}(B)-f^{(n-1)}(A)=(B-A)\int^1_0 f^{(n)}\left( (1-s)A+sB    \right)ds.
   \]
   Thus 
   \[
   \frac{f^{(n-1)}(B)-f^{(n-1)}(A)}{x_n-x_{n-1}}=u_{n-1} \int^1_0 f^{(n)}\left( (1-s)A+sB    \right)ds.   \]
   As a result, 
   \[
   \begin{split}
  &\;\;\;\; [x_0,\cdots, x_n]f\\
   &=  \int_{S_{n-1}}  \left(  \int^1_0 u_{n-1} f^{(n)}\left( \sum_{i=0}^{n-2} u_ix_i+u_{n-1}(1-s)s_{n-1}+u_{n-1}sx_n             \right)  ds \right)       du_1\cdots du_{n-1} . \\
   \end{split}
    \]
    By changing of variables:
    \[
    y_i=u_i, \quad 0\leq i\leq n-2,
    \]
   \[
   y_{n-1}=u_{n-1}(1-s),\quad y_n=u_{n-1}s,
   \]
   then 
   \[
   y_0+\cdots+y_n=1,\quad  y_i\geq 0.
   \]
   Therefore
   \[
    [x_0,\cdots, x_n]f=\int_{S_n} f^{(n)}(y_0x_0+y_1x_1+\cdots+y_nx_n) dy_1\cdots dy_n.
   \]
   $\hfill\Box$\\    
        
  \begin{lem}\label{plus1}
  For a function $g$ and $x_i\neq x_j, i\neq j$, we have 
  \[
  [1,x_1,x_2,\cdots,x_r]\left(tg(t)\right)=\frac{ [1,x_2,\cdots,x_r]g-x_1[x_1,x_2,\cdots,x_r]g     }{1-x_1}.
  \]
  \end{lem}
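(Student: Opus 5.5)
The plan is to reduce the statement to two elementary facts about divided differences: a Leibniz-type rule for multiplying by the linear function $t$, and the recursive definition in Definition \ref{div} combined with symmetry. Throughout, the points $1,x_1,\cdots,x_r$ are distinct, so the explicit formula of Definition \ref{div} may be used freely.

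\emph{Step 1 (product with $t$).} For distinct points $y_0,y_1,\cdots,y_n$ with $n\geq 1$, I will show
\[
[y_0,y_1,\cdots,y_n]\big(tg(t)\big)=y_0\,[y_0,y_1,\cdots,y_n]g+[y_1,\cdots,y_n]g .
\]
Start from the explicit formula
\[
[y_0,\cdots,y_n](tg)=\sum_{i=0}^n \frac{y_i\,g(y_i)}{\prod_{j\neq i}(y_i-y_j)}
\]
and split each numerator as $y_i=y_0+(y_i-y_0)$. The $y_0$ part gives $y_0[y_0,\cdots,y_n]g$. In the $(y_i-y_0)$ part, the $i=0$ term vanishes. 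For $i\geq 1$, the factor $y_i-y_0$ cancels the $j=0$ factor of the denominator, leaving
\[
\sum_{i=1}^n \frac{g(y_i)}{\prod_{1\leq j\leq n,\, j\neq i}(y_i-y_j)}=[y_1,\cdots,y_n]g .
\]
This is the only computation in the proof. It is routine, but it is where the bookkeeping must be done carefully.

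\emph{Step 2 (apply with $y_0=1$).} Taking $(y_0,\cdots,y_r)=(1,x_1,\cdots,x_r)$ in Step 1 gives
\[
[1,x_1,\cdots,x_r](tg)=[1,x_1,\cdots,x_r]g+[x_1,\cdots,x_r]g .
\]

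\emph{Step 3 (recursion).} By the symmetry in Definition \ref{div}, reorder the points as $(x_2,\cdots,x_r,1,x_1)$. The defining recursion, whose two endpoints are $1$ and $x_1$, then yields
\[
[1,x_1,\cdots,x_r]g=\frac{[1,x_2,\cdots,x_r]g-[x_1,x_2,\cdots,x_r]g}{1-x_1}.
\]
If one prefers not to rely on the recursion for non-adjacent points, this can also be checked directly from the explicit formula by the same factor-cancellation as in Step 1.

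\emph{Step 4 (combine).} Substitute Step 3 into Step 2 and put everything over the common denominator $1-x_1$. The numerator becomes
\[
[1,x_2,\cdots,x_r]g-[x_1,\cdots,x_r]g+(1-x_1)[x_1,\cdots,x_r]g=[1,x_2,\cdots,x_r]g-x_1[x_1,\cdots,x_r]g,
\]
which is exactly the claimed identity.

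The only potential obstacle is justifying the recursion in Step 3 for points that are not adjacent in the original order, and this is handled by the symmetry already noted in Definition \ref{div}.
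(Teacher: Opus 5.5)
Your proof is correct and is essentially the paper's argument: both establish $[y_0,\dots,y_m](tg)=y_0[y_0,\dots,y_m]g+[y_1,\dots,y_m]g$, use symmetry to write $[1,x_1,\dots,x_r]g=\bigl([1,x_2,\dots,x_r]g-[x_1,\dots,x_r]g\bigr)/(1-x_1)$, and combine over the denominator $1-x_1$. The only difference is that you verify the first identity directly from the explicit formula, while the paper specializes the Leibniz rule to $f(t)=t$. One cosmetic point: the recursion in Definition \ref{div} differences the first and last points, so the ordering that fits it is $(x_1,x_2,\dots,x_r,1)$, not $(x_2,\dots,x_r,1,x_1)$.
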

   \noindent{\bf Proof:} 
   One can prove the above statement by the explicit formula  of divided difference in Definition \ref{div}. Here we will show that the above statement is a direct  corollary  of  the Leibniz formula for divided differences.   
   
   By \cite{boo}, we have the following Leibniz formula
   \[
   [y_0,\cdots, y_m]\left(fg\right)=\sum_{k=0}^m [y_0,\cdots, y_k]f\cdot [y_k,\cdots,y_m]g.  \tag{1}
   \]
   By the Leibniz formula $(1)$, it follows that
   \[
   [y_0, \cdots, y_m]\left(tg(t) \right)=y_0[y_0,\cdots, y_m]g+[y_1,\cdots,y_m]g.   \tag{2}
   \]
   By the symmetry of divided difference with respect to $$y_0,y_1,\cdots, y_m,$$ it is easy to check that
   \[
   [y_0,\cdots, y_m]g= \frac{[y_0,y_2,\cdots, y_m]g-[y_1,y_2,\cdots,y_m]g       }{y_0-y_1} .    \tag{3}  \]
   As a result, 
   \[
   \begin{split}
   &   \;\;\;\;    [1,x_1,x_2,\cdots,x_r]\left(tg(t)\right)   \\
   &=      [1,x_1,x_2,\cdots,x_r]g +     [x_1,x_2,\cdots,x_r]g    \\
   &=  \frac{    [1,x_2,\cdots,x_r]g -   [x_1,x_2,\cdots,x_r]g     }{1-x_1}    + [x_1,x_2,\cdots,x_r]g              \\
   &= \frac{ [1,x_2,\cdots,x_r]g-x_1[x_1,x_2,\cdots,x_r]g     }{1-x_1}  . \end{split}
   \]   
  $\hfill\Box$\\

  \begin{lem}\label{sint}
   Let $M,r\geq 0$.
  For arbitrary points $u_0,u_1,\cdots,u_r$, 
  \[[u_0,u_1,\cdots, u_r]t^{M+r}=h_M(u_0,\cdots,u_r).\]
  Here \[
  h_M(u_0,\cdots, u_r)=\sum_{\substack{  a_0+\cdots +a_r=M    \\ a_0,\,\cdots,a_r\geq 0       }} u_0^{a_0}\cdots u_r^{a_r}
  \]
  \end{lem}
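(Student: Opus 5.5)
I would prove Lemma \ref{sint} by induction on $r$, using the recursive definition of divided differences in Definition \ref{div} together with a telescoping identity for complete homogeneous symmetric polynomials. Since both sides are polynomials in $u_0,\cdots,u_r$ (the left side is the continuous extension when points coincide), it suffices to prove the identity for pairwise distinct points. The general case then follows by continuity, because $h_M$ is continuous and the coincident-point divided difference is defined as the continuous limit.

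For $r=0$ the statement reads $[u_0]t^M=u_0^M=h_M(u_0)$, which is immediate. For the inductive step, assume the identity holds for $r-1$ points and all $M\geq 0$. By the recursive formula and the symmetry of divided differences,
\[
[u_0,\cdots,u_r]t^{M+r}=\frac{[u_1,\cdots,u_r]t^{M+r}-[u_0,\cdots,u_{r-1}]t^{M+r}}{u_r-u_0}.
\]
Writing $M+r=(M+1)+(r-1)$, the induction hypothesis gives the numerator as $h_{M+1}(u_1,\cdots,u_r)-h_{M+1}(u_0,\cdots,u_{r-1})$.

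It then remains to verify the polynomial identity
\[
h_{M+1}(u_1,\cdots,u_r)-h_{M+1}(u_0,\cdots,u_{r-1})=(u_r-u_0)\,h_M(u_0,\cdots,u_r).
\]
I would derive it from the generating function $\sum_{M\geq 0}h_M(v_1,\cdots,v_k)z^M=\prod_{j}(1-v_jz)^{-1}$. Put $P=\prod_{j=1}^{r-1}(1-u_jz)^{-1}$. The generating function of the left side is
\[
P\Bigl(\frac{1}{1-u_rz}-\frac{1}{1-u_0z}\Bigr)=\frac{(u_r-u_0)z\,P}{(1-u_0z)(1-u_rz)}.
\]
The right-hand expression is $(u_r-u_0)z\sum_M h_M(u_0,\cdots,u_r)z^M$. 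Comparing the coefficients of $z^{M+1}$ gives the identity. Alternatively, one can split the monomials of $h_{M+1}$ according to whether they contain $u_r$, respectively $u_0$.

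The only delicate point is the passage to coincident points, since the lemma is stated for arbitrary points. Here I would rely on the convention in Definition \ref{div} that divided differences at repeated nodes are continuous extensions. One could also invoke the Hermite--Genocchi formula (Theorem \ref{hg}), which shows directly that $[u_0,\cdots,u_r]t^{M+r}$ depends continuously (indeed polynomially) on the nodes. Apart from this point, the argument is a routine algebraic verification.
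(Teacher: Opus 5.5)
Your argument is correct, but it takes a different route from the paper's.

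\textbf{The paper's route.} The paper does not induct on $r$. For distinct nodes it starts from the explicit formula
$[u_0,\cdots,u_r]t^{M+r}=\sum_{i}u_i^{M+r}/\prod_{j\neq i}(u_i-u_j)$.
It sums over $M$ to obtain
$\sum_{i}\frac{u_i^{r}}{\prod_{j\neq i}(u_i-u_j)}\cdot\frac{1}{1-u_iX}$,
identifies this with $\prod_{i=0}^r(1-u_iX)^{-1}$ by an $(r+1)$-term partial-fraction decomposition, and reads off the coefficient of $X^M$. Coincident nodes are then handled by continuous extension, exactly as you do.

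\textbf{Your route.} You use the Newton recursion and induction on $r$. This reduces everything to the identity
$h_{M+1}(u_1,\cdots,u_r)-h_{M+1}(u_0,\cdots,u_{r-1})=(u_r-u_0)\,h_M(u_0,\cdots,u_r)$.
For that identity you only need the trivial two-term partial fraction $\frac{1}{1-u_rz}-\frac{1}{1-u_0z}$.

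\textbf{What each buys.} The paper's approach gets the full generating-function identity
$\sum_{M\geq 0}[u_0,\cdots,u_r]t^{M+r}X^M=\prod_i(1-u_iX)^{-1}$
in one stroke, with no induction. However, it rests on a multi-term partial-fraction identity that the paper only asserts is ``easy to check.'' Your approach is fully self-contained and shows why the lemma holds: $h_M$ satisfies the same recursion as divided differences of monomials. The cost is an induction.

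\textbf{A small point of phrasing.} You cannot assert in advance that the left side is a polynomial at coincident nodes. The correct order is to prove the identity on the dense set of pairwise distinct nodes first. The continuous extension that defines coincident-node divided differences must then equal $h_M$. Your final paragraph effectively does this, so this is a matter of ordering the sentences, not a gap.
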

    \noindent{\bf Proof:}
    For $u_i\neq u_j, i\neq j$, we have the following explicit formula
    \[
    [u_0,u_1,\cdots, u_r]t^{M+r}=\sum_{i=0}^r \frac{u_i^{M+r}}{\prod_{j\neq i} (u_i-u_j)}
        \]
        Thus 
        \[
        \sum_{M=0}^{+\infty}[u_0,u_1,\cdots, u_r]t^{M+r}\cdot X^M=\sum_{i=0}^r \frac{u_i^{r}}{\prod_{j\neq i} (u_i-u_j)}\cdot \frac{1}{1-u_iX}.                \]
    By the partial-fraction decomposition, it is easy to check that
    \[
    \sum_{i=0}^r \frac{u_i^{r}}{\prod_{j\neq i} (u_i-u_j)}\cdot \frac{1}{1-u_iX}=\prod_{i=0}^r\frac{1}{1-u_iX}.     \]
    As a result, 
    \[
    \begin{split}
    &   \sum_{M=0}^{+\infty}[u_0,u_1,\cdots, u_r]t^{M+r}\cdot X^M        = \prod_{i=0}^r\frac{1}{1-u_iX}               =\sum_{N=0}^{+\infty} h_M(u_0,\cdots,u_r) X^M.
    \end{split}
    \]
    Thus the Lemma is proved for $u_i\neq u_j,i\neq j$. In general cases, the Lemma follows from continuous extension.
  $\hfill\Box$\\    
  
Let
 \[
 X_0=1,X_j=x_1x_2\cdots x_{k_1+\cdots +k_j}, \;\;\; 1\leq j\leq r.
 \]
    By Lemma \ref{sint}, for $M\in \mathbb{Z}^+$, one has  
    \[
    \begin{split}
    &\;\;\;\;  \int_{(0,1)^{k_1+\cdots+k_r}}[X_0,X_1,\cdots,X_r]t^{M+r-1}dx_1dx_2\cdots dx_{k_1+\cdots+k_r}  \\
    &=    \int_{(0,1)^{k_1+\cdots+k_r}}   h_{M-1}(X_0,\cdots,X_r)  dx_1dx_2\cdots dx_{k_1+\cdots+k_r}      \\
    &=    \int_{(0,1)^{k_1+\cdots+k_r}}   \sum_{\substack{  a_0+\cdots +a_r=M-1    \\ a_0,\,\cdots,a_r\geq 0       }} X_0^{a_0}\cdots X_r^{a_r}     dx_1dx_2\cdots dx_{k_1+\cdots+k_r}                                \\
       &=    \int_{(0,1)^{k_1+\cdots+k_r}}   \sum_{\substack{  a_0+\cdots +a_r=M-1    \\ a_0,\,\cdots,a_r\geq 0       }}  \\
       &\;\;\;\; (x_1\cdots x_{k_1})^{a_1+\cdots a_r}(x_{k_1+1}\cdots x_{k_1+k_2}    )^{a_2+\cdots +a_r} \cdots (x_{k_1+\cdots +k_{r-1}+1}\cdots x_{k_1+\cdots +k_r})^{a_r}\\
       &\;\;\;\;dx_1dx_2\cdots dx_{k_1+\cdots+k_r}                                \\
       &=\int_{(0,1)^{k_1+\cdots+k_r}}    \sum_{M\geq m_1\geq m_2\geq\cdots\geq m_r\geq 1} \\
       &\;\;\;\;  (x_1\cdots x_{k_1})^{m_1-1}(x_{k_1+1}\cdots x_{k_1+k_2}    )^{m_2-1} \cdots (x_{k_1+\cdots +k_{r-1}+1}\cdots x_{k_1+\cdots +k_r})^{m_r-1}    \\
       &\;\;\;\;   dx_1dx_2\cdots dx_{k_1+\cdots+k_r}    \\
       &=  \sum_{M\geq m_1\geq\cdots\geq m_r\geq 1} \frac{1}{m_1^{k_1}\cdots m_r^{k_r}}\\
       &=H_M^\star(k_1,\cdots,k_r).    \end{split}     \tag{4}     \]
    By  the same analysis, one has 
    \[
    \begin{split}
    &\;\;\;\;  \int_{(0,1)^{k_1+\cdots+k_r}}[X_0,X_1,\cdots,X_r]t^{M+r-1}\cdot (x_1x_2\cdots x_{k_1+\cdots +k_r})^sdx_1dx_2\cdots dx_{k_1+\cdots+k_r}  \\
      &=    \int_{(0,1)^{k_1+\cdots+k_r}}   h_{M-1}(X_0,\cdots,X_r) \cdot (x_1x_2\cdots x_{k_1+\cdots +k_r})^s    dx_1dx_2\cdots dx_{k_1+\cdots+k_r}      \\
      &=   G_M^\star(s;k_1,\cdots,k_r).  \\    \end{split}   \tag{5}
    \]
    In conclusion, we have 
    \[
    H_M^\star(k_1,\cdots,k_r)= \int_{(0,1)^{k_1+\cdots+k_r}}[X_0,X_1,\cdots,X_r]t^{M+r-1}dx_1\cdots dx_{k_1+\cdots+k_r},     \]
    \[
   G_M^\star(s;k_1,\cdots,k_r)=\int_{(0,1)^{k_1+\cdots+k_r}}[X_0,X_1,\cdots,X_r]t^{M+r-1}\cdot (x_1\cdots x_{k_1+\cdots +k_r})^sdx_1\cdots dx_{k_1+\cdots+k_r} .   \]
  
  \begin{lem}\label{ker}
  Let  $0<X_r<\cdots <X_1<1$ and $P_j=\prod_{i=1}^j(1-X_i)$, $P_0=1$. Define 
  \[
  A_r(s;X_1,\cdots ,X_r)=[1,X_1,\cdots, X_r]t^{s+r-1}, \;\;\;A_0(s)=1
  \]
  and 
  \[
  P_{M,r}(s;X_1,\cdots,X_r)= A_r(M;X_1,\cdots ,X_r) -\sum_{j=1}^rX_j^sA_j(M;X_1,\cdots,X_j) A_{r-j}\left( s; \frac{X_{j+1}}{X_j},\cdots, \frac{X_r}{X_j}     \right). \]
Then \\
$(i)$
For $r\geq 1$, 
\[
A_r(s;X_1,\cdots, X_r)=\frac{1}{1-X_1}A_{r-1}(s; X_2,\cdots,X_r)-\frac{X_1^s}{1-X_1}A_{r-1}\left( s; \frac{X_2}{X_1},\cdots, \frac{X_r}{X_1}    \right);
\]
$(ii)$ In general cases, 
 \[
 A_r(s;X_1,\cdots,X_r)=\frac{1}{P_r}-\sum_{j=1}^r \frac{X_j^s}{P_j}A_{r-j}\left(s; \frac{X_{j+1}}{X_j},\cdots, \frac{X_r}{X_j}      \right);
 \]        
 Equivalently, 
 \[
 A_r(s;X_1,\cdots,X_r)=\frac{1-X_r^s}{P_r}-\sum_{j=1}^{r-1} \frac{X_j^s}{P_j}A_{r-j}\left(s; \frac{X_{j+1}}{X_j},\cdots, \frac{X_r}{X_j}      \right);
 \]
 $(iii)$ \[\shoveleft{
 P_{M,r}(s; X_1,\cdots,X_r)=A_r(s; X_1,\cdots, X_r)+A_r(M; X_1,\cdots, X_r) - A_r(M+s; X_1,\cdots, X_r);} \]
 $(iv)$ \[
 \mathop{\mathrm{lim}}_{M\rightarrow +\infty} P_{M,r}(s; X_1,\cdots,X_r)=A_r(s; X_1,\cdots, X_r).
 \]
  \end{lem}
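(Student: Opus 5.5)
The plan is to prove the four parts in order, each feeding into the next. Part (iii) is the only genuinely computational step.

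For $(i)$, apply Lemma \ref{plus1} with $g(t)=t^{s+r-2}$ and points $1,X_1,\dots,X_r$, which are distinct since $0<X_r<\cdots<X_1<1$. This gives
\[
A_r(s;X_1,\dots,X_r)=\frac{[1,X_2,\dots,X_r]t^{s+r-2}-X_1[X_1,X_2,\dots,X_r]t^{s+r-2}}{1-X_1}.
\]
The first divided difference is exactly $A_{r-1}(s;X_2,\dots,X_r)$. For the second we use the homogeneity of divided differences, read off from the explicit formula in Definition \ref{div}:
\[
[\lambda y_0,\dots,\lambda y_{m}]t^{a}=\lambda^{a-m}[y_0,\dots,y_m]t^a,\qquad \lambda>0.
\]
Take $\lambda=X_1$, $m=r-1$, $a=s+r-2$ and $y=(1,X_2/X_1,\dots,X_r/X_1)$. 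Then $X_1[X_1,\dots,X_r]t^{s+r-2}=X_1^{s}A_{r-1}(s;X_2/X_1,\dots,X_r/X_1)$, which is $(i)$.

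For $(ii)$, induct on $r$, applying $(i)$ once. Apply the induction hypothesis to $A_{r-1}(s;X_2,\dots,X_r)$. Its partial products are $\prod_{i=2}^j(1-X_i)$, and multiplying by $1/(1-X_1)$ turns these into $1/P_j$. The ratios appearing there, $X_k/X_j$, are unchanged. The term $-\frac{X_1^s}{P_1}A_{r-1}(s;X_2/X_1,\dots)$ supplies the $j=1$ summand. The equivalent form follows by merging the $j=r$ summand, where $A_0=1$, with $1/P_r$.

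For $(iv)$, granting $(iii)$, it suffices to show that $A_r(M;X)-A_r(M+s;X)\to0$. I will prove the stronger statement that, for $\mathrm{Re}(s)>0$ and all points in $(0,1)$, $A_r(M+s;Y)\to 1/\prod_i(1-Y_i)$ as $M\to\infty$, and the same with $s=0$. The proof is by induction on $r$ using $(ii)$ with $s$ replaced by $M+s$. Each correction term is $X_j^{M+s}/P_j$ times $A_{r-j}(M+s;\text{ratios})$. By induction the second factor converges, hence is bounded. The first factor satisfies $|X_j^{M+s}|=X_j^{M+\mathrm{Re}\,s}\to0$. For $s=0$, Lemma \ref{sint} also gives the limit directly, since $A_r(M)=h_{M-1}(1,X_1,\dots,X_r)\to\prod_i(1-X_i)^{-1}$.

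The main obstacle is $(iii)$. I would prove it by induction on $r$ rather than by a closed-form identity. Expand every $A_r(\cdot;X_1,\dots,X_r)$ occurring in $P_{M,r}$ and in the claimed right-hand side by $(i)$; this covers $A_r(M)$, $A_r(s)$, $A_r(M+s)$, and the factors $A_j(M;X_1,\dots,X_j)$ with $j\ge1$. Both sides then become
\[
\frac{1}{1-X_1}\bigl(\text{expression in }X_2,\dots,X_r\bigr)-\frac{X_1^{(\cdot)}}{1-X_1}\bigl(\text{expression in }X_2/X_1,\dots,X_r/X_1\bigr).
\]
The first bracket should be $P_{M,r-1}(s;X_2,\dots,X_r)$, up to regrouping the $j=1$ term. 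To it we apply the induction hypothesis. In the second bracket, the powers $X_1^{M}$ and $X_1^{M+s}$ must be collected, using $X_j^s=X_1^s(X_j/X_1)^s$, to again form $P_{M,r-1}$ at the rescaled points. The $j=1$ term of the defining sum, $X_1^sA_1(M;X_1)A_{r-1}(s;\dots)$, must be expanded via $A_1(M;X_1)=(1-X_1^M)/(1-X_1)$. I expect it to produce exactly the mismatch between $X_1^s$ and $X_1^{M+s}$ that makes the bookkeeping close. If this expansion turns out too messy, the fallback is to verify $(iii)$ first for $s\in\mathbb{Z}^+$. In that case $A_r(n)=h_{n-1}(1,X)$ by Lemma \ref{sint}, and the identity becomes the combinatorial splitting of index ranges described in the Remark. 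We then extend to all $s$ by analyticity in $s$ together with the growth bound from $(ii)$, using Carlson's theorem.
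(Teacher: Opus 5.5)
Your proposal is correct. Parts $(i)$ and $(ii)$ are the paper's argument: Lemma \ref{plus1} with $g=t^{s+r-2}$, then homogeneity, then induction.

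For $(iii)$ and $(iv)$ you take a different route. For $(iii)$ the paper only cites the Leibniz rule $(1)$ together with homogeneity. It proves $(iv)$ without using $(iii)$ at all: it inserts $A_j(M;X_1,\dots,X_j)=h_{M-1}(1,X_1,\dots,X_j)\to 1/P_j$ (Lemma \ref{sint}) into the definition of $P_{M,r}$ and recognizes the limit as the right side of $(ii)$. Your $(iv)$ rests on $(iii)$ plus an extra induction showing $A_r(M+s;\cdot)\to 1/P_r$. That induction is correct, but the paper's route is shorter and keeps $(iv)$ independent of $(iii)$.

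Your induction for $(iii)$ via $(i)$ does close, and it gives a derivation that can be checked line by line. Write $Y=(X_2,\dots,X_r)$ and $Z=(X_2/X_1,\dots,X_r/X_1)$. The $Y$-bracket of $P_{M,r}(s;X)$ is exactly $P_{M,r-1}(s;Y)$. The rescaled bracket is $A_{r-1}(M;Z)-X_1^s\bigl(A_{r-1}(M;Z)-P_{M,r-1}(s;Z)\bigr)$, which is not a multiple of $P_{M,r-1}(s;Z)$. Together with the $j=1$ term this yields
\[ (1-X_1)P_{M,r}(s;X)=P_{M,r-1}(s;Y)-X_1^s(1-X_1^M)A_{r-1}(s;Z)-X_1^M(1-X_1^s)A_{r-1}(M;Z)-X_1^{M+s}P_{M,r-1}(s;Z). \]
The quantity $(1-X_1)\bigl(A_r(s;X)+A_r(M;X)-A_r(M+s;X)\bigr)$ reduces to the same four terms once you apply the induction hypothesis at $Y$, and at $Z$ in the form $A_{r-1}(M+s;Z)=A_{r-1}(s;Z)+A_{r-1}(M;Z)-P_{M,r-1}(s;Z)$. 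The induction starts at $r=1$, or at $r=0$ with $P_{M,0}=1$. So your prediction about the $j=1$ term is right, but you should carry out this bookkeeping rather than leave it as an expectation.

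Your fallback also works, and it does not need Carlson's theorem. For fixed $X$, both sides of $(iii)$ have the form $c_0+\sum_{k=1}^r c_kX_k^s$ with coefficients independent of $s$. Agreement at $s=1,\dots,r+1$ therefore forces equality by a Vandermonde argument.
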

    \noindent{\bf Proof:} 
    $(i)$ By Lemma \ref{plus1},     for $r\geq 1$,
    \[
    [1,X_1,\cdots,X_r]t^{s+r-1}=\frac{  [1,X_2,\cdots ,X_r]t^{s+r-2}-X_1[X_1,X_2,\cdots, X_r]t^{s+r-2}         }{1-X_1}
    \]
    By the definition of divided difference, we have 
    \[
    [X_1,X_2,\cdots, X_r]t^{s+r-2}=X_1^{s-1} \left[1, \frac{X_2}{X_1},\cdots, \frac{X_r}{X_1}      \right] t^{s+r-2}.
    \]
    Thus 
    \[
    \begin{split}
    &\;\;\;\;A_r(s; X_1, \cdots, X_r)            \\
    &=    \frac{1}{1-X_1} [1,X_2,\cdots,X_r]t^{s+r-2}-\frac{X_1^s}{1-X_1}      \left[1, \frac{X_2}{X_1},\cdots, \frac{X_r}{X_1}      \right] t^{s+r-2}          \\
    &=   \frac{1}{1-X_1}A_{r-1}(s; X_2,\cdots,X_r)-\frac{X_1^s}{1-X_1}A_{r-1}\left( s; \frac{X_2}{X_1},\cdots, \frac{X_r}{X_1}    \right).            \\
    \end{split}
    \]
    $(ii)$  For $r=1$, the statement $(i)$ is proved in $(i)$.    By induction, one has 
        \[
    \begin{split}
    &\;\;\;\;A_r(s; X_1, \cdots, X_r)            \\
       &=   \frac{1}{1-X_1}A_{r-1}(s; X_2,\cdots,X_r)-\frac{X_1^s}{1-X_1}A_{r-1}\left( s; \frac{X_2}{X_1},\cdots, \frac{X_r}{X_1}    \right)           \\
    &=   \frac{1}{1-X_1}\left[\frac{1}{(1-X_2)\cdots (1-X_r)}- \sum_{j=2}^r \frac{X_j^s}{(1-X_2)\cdots (1-X_j)}  A_{r-j}\left(s; \frac{X_{j+1}}{X_j},\cdots, \frac{X_r}{X_j}      \right)   \right] \\
    &\;\;\;\;-\frac{X_1^s}{1-X_1}A_{r-1}\left( s; \frac{X_2}{X_1},\cdots, \frac{X_r}{X_1}    \right)                \\
    &= \frac{1}{P_r}-\sum_{j=1}^r \frac{X_j^s}{P_j}A_{r-j}\left(s; \frac{X_{j+1}}{X_j},\cdots, \frac{X_r}{X_j}      \right) .
       \end{split}
    \]
    $(iii)$ The formula follows immediately from the Leibniz formula $(1)$ of divided differences and the formula
    \[
      [y_1,y_2,\cdots, y_r]t^{s+r-2}=y_1^{s-1} \left[1, \frac{y_2}{y_1},\cdots, \frac{y_r}{y_1}      \right] t^{s+r-2}.    \]
    $(iv)$ By Lemma \ref{sint}, for $1\leq j\leq r$ and $M\geq 1$, 
    \[
    \begin{split}
    &\;\;\;\;  A_j(M; X_1,\cdots, X_j)        \\
    &=  [1,X_1, \cdots, X_j]   t^{M+j-1}         \\
    &=h_{M-1}(1,X_1,\cdots,X_j)                                                \\
    &=     \sum_{q=0}^{M-1} h_q(X_1,\cdots, X_j).        \\
    \end{split}
    \]
    As \[
    0<X_r<\cdots<X_1<1,
    \]
    one has 
    \[
    \mathop{\mathrm{lim}}_{M\rightarrow +\infty} A_j(M; X_1,\cdots, X_j) =  \sum_{q=0}^{+\infty} h_q(X_1,\cdots, X_j)=\frac{1}{\prod_{i=1}^j (1-X_i)   }=\frac{1}{P_j}.
    \]
    So we have 
    \[
    \begin{split}
    &\;\;\;\;  \mathop{\mathrm{lim}}_{M\rightarrow +\infty}  P_{M,r}(s; X_1,\cdots,X_r)   \\
    &= \frac{1}{P_r}-\sum_{j=1}^r X_j^s  \cdot \frac{1}{P_j}    A_{r-j}\left( s; \frac{X_{j+1}}{X_j},\cdots, \frac{X_r}{X_j}     \right)        \\
    &=   A_r(s; X_1,\cdots, X_r).  
    \end{split}
    \]
     $\hfill\Box$\\
     
     \begin{lem}\label{limk}
     For $r\geq 1$, $(k_1,\cdots, k_r)\in \left(\mathbb{Z}^+\right)^r$, and a compact subset 
     \[
     \Omega\subseteq \{s\in \mathbb{C}\,|\, \mathrm{Re}\,s>0\}. 
     \]
      Define 
     \[
     K_r=k_1+\cdots+k_r,
     \]
         \[
X_j=x_1x_2\cdots x_{k_1+\cdots +k_j}, \;\;\; 1\leq j\leq r.
 \]
    There exists a constant  $C_{\Omega,r}$  such that
     \[
     \Big{|}    [1,X_1,\cdots, X_r]t^{s+r-1}  \Big{|}\leq C_{\Omega,r}
     \]
     for all $s\in \Omega$ and  $(x_1,\cdots, x_{K_r})\in (0,1)^{K_r}$. Therefore 
     \[
     I_s(k_1,\cdots,k_r):=\int_{(0,1)^{K_r}} [1,X_1,\cdots ,X_r] t^{s+r-1} dx_1dx_2\cdots dx_{K_r}
     \]
     is holomorphic on $\mathrm{Re}\,s>0$.
        \end{lem}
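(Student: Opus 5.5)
The plan is to bound the divided difference directly through the Hermite--Genocchi formula (Theorem \ref{hg}) applied to $f(t)=t^{s+r-1}$ on the open interval $I=(0,+\infty)$. For $(x_1,\cdots,x_{K_r})\in(0,1)^{K_r}$ all the nodes $1,X_1,\cdots,X_r$ lie in $(0,1]\subset I$, and $f$ is smooth on $I$. Hence
\[
[1,X_1,\cdots,X_r]t^{s+r-1}=s(s+1)\cdots(s+r-1)\int_{S_r}\big(u_0+u_1X_1+\cdots+u_rX_r\big)^{s-1}du_1\cdots du_r ,
\]
with $u_0=1-\sum_{j\ge1}u_j$. This holds also when some $X_j$ coincide, since both sides are continuous in the nodes and the divided difference is defined by continuous extension in Definition \ref{div}. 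The polynomial prefactor $|s(s+1)\cdots(s+r-1)|$ is bounded on the compact set $\Omega$, so everything reduces to bounding the simplex integral uniformly in the $X_j$.

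Write $\sigma=\mathrm{Re}\,s$ and $y=u_0+\sum_j u_jX_j$. Then $y\in(0,1]$ and $y\ge u_0$, and $|y^{s-1}|=y^{\sigma-1}$. I split into two cases according to $\sigma$.
\begin{itemize}
\item If $\sigma\ge1$, then $|y^{s-1}|\le 1$, and the integral is at most $\mathrm{vol}(S_r)=1/r!$.
\item If $0<\sigma<1$, then $y^{\sigma-1}\le u_0^{\sigma-1}$. Integrating over the simplex, with the roles of $u_0$ and $u_1$ exchanged by symmetry of $S_r$, gives
\[
\int_{S_r}u_0^{\sigma-1}\,du=\frac{1}{(r-1)!}\int_0^1u^{\sigma-1}(1-u)^{r-1}du=\frac{B(\sigma,r)}{(r-1)!}\le\frac{1}{\sigma\,(r-1)!}.
\]
\end{itemize}
Since $\Omega$ is compact in $\{\mathrm{Re}\,s>0\}$, we have $\sigma\ge\sigma_\Omega>0$ on $\Omega$. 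This gives a constant $C_{\Omega,r}$ independent of $x$, as claimed. The only real obstacle is the singularity of $t^{s-1}$ at $t=0$ when $\mathrm{Re}\,s<1$. The bound $y\ge u_0$, which uses that one node equals $1$, is exactly what tames it.

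For holomorphy of $I_s(k_1,\cdots,k_r)$, note that for each fixed $x$ the integrand $s\mapsto[1,X_1,\cdots,X_r]t^{s+r-1}$ is entire in $s$, being the explicit finite combination of $X_j^{s+r-1}$ (or its continuous limit, which is again holomorphic by uniform convergence). It is also jointly measurable and, by the previous step, uniformly bounded on $\Omega\times(0,1)^{K_r}$, a set of finite measure. So $I_s$ is continuous on $\Omega$ by dominated convergence. For any triangle $\Delta\subset\Omega$, Fubini lets us exchange integrals, giving $\oint_{\partial\Delta}I_s\,ds=\int\oint_{\partial\Delta}(\cdots)\,ds\,dx=0$. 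Morera's theorem then yields holomorphy on every compact subset, hence on $\mathrm{Re}\,s>0$.
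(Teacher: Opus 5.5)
Your proof is correct and follows essentially the same route as the paper. Both apply Hermite--Genocchi to $t^{s+r-1}$, bound the simplex integral uniformly by exploiting the node equal to $1$, and then use dominated convergence plus Morera for holomorphy. The difference is minor: you use $y\ge u_0$ directly with a Beta integral and a case split on $\mathrm{Re}\,s$, whereas the paper works with $\sigma_0=\inf_{\Omega}\mathrm{Re}\,s$ and the lower bound $y\ge (u_0+X_r)/2$, so your version is slightly cleaner.
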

 \noindent{\bf Proof:}
 By the  Hermite-Genocchi formula in Theorem \ref{hg} for $f(t)= t^{s+r-1}$. 
 \[
 \begin{split}
 &\;\;\;\; I_s(k_1,\cdots,k_r)        \\
 &=    \int_{(0,1)^{K_r}} \left( \int_{S_r}  f^{(r)} \left( u_0+u_1X_1+\cdots+u_rX_r     \right)du_1\cdots du_r \right)dx_1dx_2\cdots dx_{K_r}.\\
 \end{split}
 \]
Here $f^{(r)}(t)=(s+r-1)\cdots (s+1)s \cdot t^{s-1}$ and 
\[
 S_r=\big{\{}(u_1,\cdots,u_r)\in \mathbb{R}^r\,\big{|}\, u_j\geq 0, \sum_{j=1}^r u_j\leq 1\big{\}},\qquad   u_0=1-\sum_{j=1}^r u_j.
 \]
  Let 
 \[
 \sigma_0=\mathop{\mathrm{inf}}_{s\in\Omega}\mathrm{Re}\,s>0.
 \]
 As 
 \[
  0\leq  u_0+u_1X_1+\cdots+u_rX_r    \leq 1,
   \]
 there is a constant $C_{\Omega,r}^\prime$ that
 \[
 \Big{|}[1,X_1,\cdots ,X_r] t^{s+r-1}\Big{|}\leq C_{\Omega,r}^\prime \int_{S_r} \left( u_0+u_1X_1+\cdots+u_rX_r     \right)^{\sigma_0-1} du_1\cdots du_r.
    \]
    Since $0\leq u_0\leq 1$ and $X_r\leq X_j\leq 1$ for all $j$, 
    \[
    u_0+u_1X_1+\cdots +u_rX_r\geq u_0+(1-u_0)X_r\geq \frac{u_0+X_r  }{2}.
    \]
    Thus 
    \[
     \Big{|}[1,X_1,\cdots ,X_r] t^{s+r-1}\Big{|}\leq D_{\Omega,r} \int_{S_r} \left( u_0+X_r     \right)^{\sigma_0-1} du_1\cdots du_r.
         \]
         By direct calculation, 
         \[
         \begin{split}
         &\;\;\;\;       \int_{S_r} \left( u_0+X_r     \right)^{\sigma_0-1} du_1\cdots du_r      \\
         &=   \int_{S_r} \left( 1+X_r -u_1-\cdots-u_r    \right)^{\sigma_0-1} du_1\cdots du_r                             \\
         &=\int_{1\geq v_1\geq \cdots \geq v_r\geq 0} \left(1+X_r-v_1       \right)^{\sigma_0-1}dv_1\cdots dv_r    \\
         &\leq \int^1_0(1+X_r-v_1)^{\sigma_0-1}dv_1\\
         &\leq     \frac{  (X_r+1)^{\sigma_0}-X_r^{\sigma_0}   }{\sigma_0}   \\
         &\leq \frac{2^{\sigma_0}}{\sigma_0}.
         \end{split}
         \]
         Therefore
          \[
     \Big{|}    [1,X_1,\cdots, X_r]t^{s+r-1}  \Big{|}\leq C_{\Omega,r}
     \]
     for some $C_{\Omega,r}$.
         For each fixed $(x_1,\cdots,x_{K_r})$ in the open cube $(0,1)^{K_r}$,  the function $s\mapsto [1,X_1,\cdots, X_r]t^{s+r-1}$ is entire. By the Morera's theorem and dominated convergence, $I_s(k_1,\cdots,k_r)$ is holomorphic on $\mathrm{Re}\,s>0$.    $\hfill\Box$\\

         \begin{lem}\label{conv} For ${\bf k}=(k_1,\cdots,k_r)\in \left( \mathbb{Z}^+    \right)^r$ and $\Omega$ a compact subset of $$  \{s\in \mathbb{C}\,|\, \mathrm{Re}\,s>0\}.$$
          Define 
     \[
     K_r=k_1+\cdots+k_r,
     \]
         \[
X_j=x_1x_2\cdots x_{k_1+\cdots +k_j}, \;\;\; 1\leq j\leq r.
 \]
         There is a constant $C_{{\bf k},\Omega}$ such that, for all $M\geq 1$ and all $s\in \Omega$, 
         \[
         \int_{(0,1)^{K_r}} \Big{|}  P_{M,r}(s; X_1,\cdots,X_r   ) -A_r(s;X_1,\cdots, X_r)    \Big{|}dx_1\cdots dx_{K_r}\leq  C_{{\bf k},\Omega}\frac{\left( \mathrm{log}(M+2)   \right)^{r-1}   }{M+1}.
         \]
         In particular,
         \[
          \int_{(0,1)^{K_r}}P_{M,r}(s; X_1,\cdots,X_r   )   dx_1\cdots dx_{K_r} \rightarrow       \int_{(0,1)^{K_r}}A_r(s;X_1,\cdots, X_r)    dx_1\cdots dx_{K_r}    \]
          locally uniformly for $\mathrm{Re}\,s>0$.
          \end{lem}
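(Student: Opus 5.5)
The plan is to reduce the estimate to a pointwise bound on the kernel and then integrate it over the cube, in the spirit of computation $(4)$. By Lemma \ref{ker}$(iii)$,
\[
P_{M,r}(s;X_1,\cdots,X_r)-A_r(s;X_1,\cdots,X_r)=A_r(M;X_1,\cdots,X_r)-A_r(M+s;X_1,\cdots,X_r),
\]
and the right-hand side is $[1,X_1,\cdots,X_r]\bigl(t^{M+r-1}(1-t^s)\bigr)$. Since divided differences are symmetric, we may list the points in the order $X_r,X_{r-1},\cdots,X_1,1$ and apply the Leibniz formula $(1)$ with $f=t^{M+r-1}$ and $g=1-t^s$. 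This gives
\[
\sum_{k} [X_r,\cdots,X_k]t^{M+r-1}\cdot[X_k,\cdots,X_1,1](1-t^s),
\]
plus a final term $[X_r,\cdots,X_1,1]t^{M+r-1}\cdot[1](1-t^s)$. That final term vanishes because $1-1^s=0$.

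This vanishing is the key point. The only potentially non-decaying contribution, in which $1$ sits in the $t^{M+r-1}$ block and produces $h_{M-1}(1,X_1,\cdots)$, disappears. Every surviving term has its $t^{M+r-1}$ block evaluated only at points $\le X_1<1$. By Lemma \ref{sint}, the first factor equals $h_{M+k-1}(X_r,\cdots,X_k)$ (complete homogeneous of degree $M+k-1$ in $r-k+1$ variables), which is nonnegative and bounded by $\binom{M+r}{r}X_k^{M+k-1}$. The second factor equals $-[X_k,\cdots,X_1,1]t^s$. I would bound it by the Hermite--Genocchi formula (Theorem \ref{hg}), as in the proof of Lemma \ref{limk}, now with $f^{(k)}(t)=s(s-1)\cdots(s-k+1)t^{s-k}$. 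Because the simplex integral contains the point $1$, the argument $u_0+\sum u_jX_j$ is at least $\max(u_0,X_k)$. The same computation as in Lemma \ref{limk} then gives the bound $C_\Omega X_k^{\sigma_0-k+1}$, with at worst a $\log(1/X_k)$ factor when the exponent is critical.

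Combining the two factors, each term is bounded by $C\,M^{r-k}X_k^{M}\,(\log(1/X_k))^{\epsilon}$ up to polynomial factors. This is too crude, since the factor $M^{r-k}$ would destroy the $1/(M+1)$ rate. So instead of the crude bound I would keep $h_{M+k-1}(X_r,\cdots,X_k)$ exact and integrate it over the cube exactly as in $(4)$. The variables $x_{K_{k-1}+1},\cdots$ enter only through monomials, so the integral of the $h$-factor times $X_k^{\sigma_0-k+1}$ (after absorbing the bounded extra powers) becomes a sum of the shape
\[
\sum_{m_k=M+c\ge m_{k+1}\ge\cdots\ge m_r\ge1}\frac{1}{m_k^{k_1+\cdots+k_k}\,m_{k+1}^{k_{k+1}}\cdots m_r^{k_r}}
\;\le\;\frac{(\log(M+2))^{r-k}}{M+1},
\]
using $k_1+\cdots+k_k\ge1$ and $H^\star_{n}(\{1\}^{j})\le (1+\log n)^{j}$. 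The variables of the first $k-1$ blocks integrate to a bounded constant. Summing over $k$ gives the claimed bound $C_{{\bf k},\Omega}(\log(M+2))^{r-1}/(M+1)$.

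The main obstacle is making the Hermite--Genocchi bound on $[X_k,\cdots,X_1,1]t^s$ compatible with this exact integration. The singular factor $X_k^{\sigma_0-k+1}$ must be absorbed into the monomial $X_k^{M+k-1}$ coming from $h$, and this must be done without losing more than a constant or a log factor uniformly in $M$. I would first try to write the Hermite--Genocchi integral explicitly as in Lemma \ref{limk}: integrate out $v_2,\cdots,v_k$ and isolate the one-dimensional integral $\int_0^1(1+X_k-v)^{\sigma_0-k}\,dv$, which is bounded by $C X_k^{\sigma_0-k+1}$ (or by $C\log(2/X_k)$ when the exponent is critical). Once this is done, the uniform convergence statement in the lemma follows immediately from the displayed $L^1$ bound.
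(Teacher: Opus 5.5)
Your argument is correct, but it follows a different route from the paper's. The paper removes the node $1$ in one step. It writes $t^{M+r-1}-t^{M+s+r-1}=(1-t)G_{M,r}(t)$ with $G_{M,r}=t^{M+r-1}\phi_s$ and $\phi_s=(1-t^s)/(1-t)$, and uses identity $(7)$ to reduce to $-[X_1,\cdots,X_r]G_{M,r}$. It then applies Hermite--Genocchi to this single order-$(r-1)$ divided difference. That needs the derivative estimates $(8)$, $|\phi_s^{(j)}(t)|\le Ct^{-j}$, and yields the pointwise bound $C(M+r)^{r-1}t^M$ for $G_{M,r}^{(r-1)}$. The polynomial loss $(M+r)^{r-1}$ is recovered by reading Hermite--Genocchi backwards, $\int_{S_{r-1}}(\cdots)^M=\frac{M!}{(M+r-1)!}h_M(X_1,\cdots,X_r)$, before the exact cube integration as in $(4)$.

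You instead apply the discrete Leibniz rule with $g=1-t^s$, so that $g(1)=0$ kills the only non-decaying term. This leaves $r$ products $h_{M+k-1}(X_r,\cdots,X_k)\cdot[X_k,\cdots,X_1,1](1-t^s)$. Since $h_{M+k-1}\ge 0$, you can keep the $M$-dependent factor exact and integrate it directly. The only analytic input is then an $M$-independent bound on $[X_k,\cdots,X_1,1]t^s$. So you need no derivative bounds on $\phi_s$ and no loss-then-recovery of powers of $M$. You also get the finer term-by-term estimate $O\big((\log M)^{r-k}/(M+1)^{K_k}\big)$. The paper's version has the advantage of working with one divided difference throughout.

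One correction. The bound $C X_k^{\sigma_0-k+1}$ for $[X_k,\cdots,X_1,1]t^s$ is right only when $\sigma_0<k-1$. If $\sigma_0>k-1$ the correct bound is a constant; for instance when $k=1$, $[X_1,1]t^s\to 1$ as $X_1\to 0$. The clean uniform statement, and the exact analogue of $(8)$, is $|[X_k,\cdots,X_1,1]t^s|\le C_{\Omega,k}X_k^{-(k-1)}$. It follows from $u_0+\sum u_jX_j\ge X_k$ together with the $(\cdot)^{\sigma_0-1}$ computation in Lemma \ref{limk}.

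With this bound, what you call the main obstacle is immediate. Each $X_j/X_k$ involves only the variables of blocks $k+1,\cdots,j$. So every monomial of $h_{M+k-1}(X_r,\cdots,X_k)$ is $X_k^{M+k-1}$ times a monomial in those later variables. The $k$-th term therefore integrates to at most $C(M+1)^{-K_k}H^\star_{M+k}(k_{k+1},\cdots,k_r)\le C(1+\log(M+k))^{r-k}/(M+1)$, and summing over $k$ gives the lemma.
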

           \noindent{\bf Proof:}         
             Define 
         \[\sigma_0=\mathop{\mathrm{inf}}_{s\in \Omega} \mathrm{Re}\,s>0,\quad B=\mathop{\mathrm{sup}}_{s\in \Omega}\,|s|.
         \]
         For $r=1$, 
         \[
         P_{M,1}(s;X_1)=A_1(M; X_1)-X_1^sA_1(M;X_1).
         \]
         Thus 
         \[
         \begin{split}
         &\;\;\;\; P_{M,1}(s;X_1)-A_1(s;X_1)       \\
         &= [1,X_1]t^M-X_1^s [1,X_1]t^M-[1,X_1]t^s            \\
         &= \frac{1-X_1^{M}}{1-X_1}-X_1^s  \frac{1-X_1^{M}}{1-X_1} -\frac{1-X_1^s}{1-X_1}             \\
         &=-\frac{ X_1^M(1-X_1^s )  }{1-X_1}    .        \\
         \end{split}
         \]
         Now we have 
         \[
         \begin{split}
         &\;\;\;\;  \int_{(0,1)^{K_1}} \Big{|}  P_{M,1}(s; X_1  ) -A_1(s;X_1)    \Big{|}dx_1\cdots dx_{K_1}       \\
         &=    \int_{(0,1)^{K_1}} \Bigg{|}  \frac{ X_1^M(1-X_1^s )  }{1-X_1}   \Bigg{|}dx_1\cdots dx_{K_1}             \\
         &\leq \int_{(0,1)^{K_1}} \sum_{m\geq 0} \Big{|}X_1^{M+m}(1-X_1^s)       \Big{|} dx_1\cdots dx_{K_1}.   \\
                    \end{split}
              \]
              Since 
              \[
             1-x^s=s \int^1_x t^{s-1}dt,\quad x\in(0,1),
              \]
              one has
              \[
                 \big{|}1-x^s\big{|}\leq \big{|} s \big{|} \int^1_x t^{\sigma_0-1}dt              \]
For $s\in \Omega$, if $\sigma_0\geq 1$, then 
\[
\big{|}1-x^s\big{|}\leq B(1-x), \quad x\in(0,1).
\]                
If $0<\sigma_0<1$,
\[
                 \big{|}1-x^s\big{|}\leq \big{|} s \big{|} \int^1_x t^{\sigma_0-1}dt \leq B\cdot  \frac{1-x^{\sigma_0}      }{\sigma_0} \leq B\cdot \frac{1-x}{\sigma_0},\quad x\in(0,1).           \]
                 In both cases, for $s\in\Omega$,  we have 
                 \[
                  \big{|}1-x^s\big{|}\leq C(1-x), \quad x\in (0,1), \tag{6}
                                   \]
                                   which $C=B\,\mathrm{max}\Big{\{}  1,\frac{1}{\sigma_0}  \Big{\}}$.
                                   Thus 
                                     \[
         \begin{split}
         &\;\;\;\;  \int_{(0,1)^{K_1}} \Big{|}  P_{M,1}(s; X_1  ) -A_r(s;X_1)    \Big{|}dx_1\cdots dx_{K_1}       \\
             &\leq \int_{(0,1)^{K_1}} \sum_{m\geq 0} \Big{|}X_1^{M+m}(1-X_1^s)       \Big{|} dx_1\cdots dx_{K_1}   \\
             &\leq  C \int_{(0,1)^{K_1}} \sum_{m\geq 0} \Big{|}X_1^{M+m}(1-X_1)       \Big{|} dx_1\cdots dx_{K_1}   \\
             &\leq    C \int_{(0,1)^{K_1}} X_1^{M} dx_1\cdots dx_{K_1}        \\
             &\leq \frac{C}{(M+1)^{K_1}}.
                                 \end{split}
              \]
              By induction, we assume that the statement is proved for $r<n$. For $r=n$, by Lemma \ref{ker} and Lemma \ref{limk}, we have 
              \[
              \begin{split}
              &\;\;\;\;  \int_{(0,1)^{K_r}} \Big{|}  P_{M,r}(s; X_1,\cdots,X_r   ) -A_r(s;X_1,\cdots, X_r)    \Big{|}dx_1\cdots dx_{K_r}\\
              &=   \int_{(0,1)^{K_r}} \Big{|}  A_r(M;X_1,\cdots, X_r) -  A_r(M+s;X_1,\cdots, X_r)  \Big{|}dx_1\cdots dx_{K_r}\\      
              &=      \int_{(0,1)^{K_r}} \Big{|}  [1,X_1,\cdots, X_r](t^{M+r-1}-t^{M+s+r-1})  \Big{|}dx_1\cdots dx_{K_r}.\\      
             \end{split}
         \]
         By the explicit formula of divided differences in Definition \ref{div}, one has 
         \[
         [1,y_1,\cdots,y_r]\left( (t-1)g(t)    \right)=  [y_1,\cdots,y_r]\left( g(t)    \right).      \tag{7}   \]
         Define 
         \[
         \phi_s(t)=\frac{1-t^s}{1-t}, G_{M,r}(t)=t^{M+r-1}\phi_s(t),
         \]
         then 
         \[
         t^{M+r-1}-t^{M+s+r-1} =(1-t) G_{M,r}(t). 
                \]
                As 
                \[
                \phi_s(t)=s\int^1_0 \left( t+u(1-t)   \right)^{s-1}du,
                \]
                for $0\leq j\leq r-1$ we have 
                \[
                \phi_s^{(j)}(t)=  s(s-1)\cdots (s-j)\int^1_0(1-u)^j   \left(  t+u(1-t) \right)^{s-1-j}du.
                \]
                Thus for $s\in \Omega$ and $t\in (0,1)$,
                \[
                \big{|}  \phi_s^{(j)}(t)   \big{|}\leq L_{\Omega,r} \int^1_0 (t+u(1-t))^{\mathrm{Re}\,s-1-j}du,\quad  0\leq j\leq r-1.
                \]
                For $u\in [0,1]$, one has $0<t+u(1-t)\leq 1$. Therefore
                \[
                                \big{|}  \phi_s^{(j)}(t)   \big{|}\leq L_{\Omega,r} \int^1_0 (t+u(1-t))^{\sigma_0-1-j}du.                \]
           For $0<t<1$, $t+u(1-t)\geq t$.     It follows that
           \[
           \left(t+u(1-t)\right)^{-r}\leq t^{-r}.
           \]
           Thus 
            \[
                                \big{|}  \phi_s^{(j)}(t)   \big{|}\leq L_{\Omega,r}t^{-j} \int^1_0 (t+u(1-t))^{\sigma_0-1}du= L_{\Omega,r}t^{-j} \frac{ 1-t^{\sigma_0}}{\sigma_0(1-t)} .        \]
If $0<\sigma_0\leq 1$, it is clear that 
\[
0\leq \frac{1-t^{\sigma_0}}{1-t}\leq 1.
\]
If $\sigma_0\geq 1$, then 
\[
\frac{1-t^{\sigma_0}}{1-t}=\sigma_0\xi^{\sigma_0-1}\leq \sigma_0,
\]
which $\xi\in (0,1)$.
In both cases, we have 
\[
   \big{|}  \phi_s^{(j)}(t)   \big{|}\leq C_{\Omega,r} t^{-j}, \quad t\in(0,1). \tag{8}
   \]
   Here $C_{\Omega,r}=L_{\Omega,r} \mathop{\mathrm{max}}\big{\{}1,\frac{1}{\sigma_0} \big{\}}$.
  By the  Hermite-Genocchi   formula in Theorem \ref{hg}, we have 
  \[
  [X_1,\cdots, X_r]G_{M,r}(t)= \int_{S_{r-1}}   G_{M,r}^{(r-1)}(u_0X_1+\cdots+u_{r-1}X_{r}    )  du_1\cdots du_{r-1} .   \tag{9} \]
  Here
  \[
  S_{r-1}=\big{\{}(u_1,\cdots,u_{r-1})\in \mathbb{R}^{r-1}\,\big{|}\, u_j\geq 0, \sum_{j=1}^{r-1}u_j\leq 1\big{\}},
  \]
  \[
  u_0=1-u_1-\cdots -u_{r-1}.
  \]
  Since 
  \[
  \begin{split}
  &\;\;\;\;G_{M,r}^{(r-1)}(t)\\
  &=\sum_{i=0}^{r-1} \binom{r-1}{i} \left( t^{M+r-1}   \right)^{(r-1-i)} \phi^{(i)}_s(t)\\
  &=  \sum_{i=0}^{r-1} \binom{r-1}{i} \frac{ (M+r-1)!  }{  (M+i)! }t^{M+i}\phi^{(i)}_s(t),\\      \\
  \end{split}
  \]
 by the formula $(8)$ we have
  \[
  \begin{split}
  &\;\;\;\;\big{|}G_{M,r}^{(r-1)}(t)\big{|}\\
  &\leq C_{\Omega,r}   \sum_{i=0}^{r-1} \binom{r-1}{i} (M+r-1)^{r-1-i} t^M    \\
  &\leq C_{\Omega,r}  (M+r)^{r-1}t^M.
    \end{split}       \tag{10}
  \]
  From the formula $(9)$ and the inequality $(10)$, one has
  \[
  \begin{split}
    &\;\;\;\;\Big{|}[X_1,\cdots, X_r]G_{M,r}(t)\Big{|}\\
   & \leq C_{\Omega,r} (M+r)^{r-1} \int_{S_{r-1}}  \left(  u_0X_1+\cdots+u_{r-1}X_r     \right)^M  du_1\cdots du_{r-1}. \\
   \end{split}  \tag{11}
      \]

    By the formulas $(7)$ and $(11)$, we have     
          \[
              \begin{split}
              &\;\;\;\;  \int_{(0,1)^{K_r}} \Big{|}  P_{M,r}(s; X_1,\cdots,X_r   ) -A_r(s;X_1,\cdots, X_r)    \Big{|}dx_1\cdots dx_{K_r}\\
             &=      \int_{(0,1)^{K_r}} \Big{|}  [1,X_1,\cdots, X_r]((1-t)G_{M,r}(t))  \Big{|}dx_1\cdots dx_{K_r}\\       
             &=      \int_{(0,1)^{K_r}} \Big{|}  [X_1,\cdots, X_r]G_{M,r}(t) \Big{|}dx_1\cdots dx_{K_r}\\     
             &\leq C_{\Omega,r}(M+r)^{r-1}  \int_{(0,1)^{K_r}} \left(\int_{S_{r-1}} \left( u_0X_1+\cdots+u_{r-1}X_r   \right)^Mdu_1\cdots du_{r-1}\right) dx_1\cdots dx_{K_r}      .       \end{split}
         \]
         Define 
         \[
         A_{M,r}= \int_{(0,1)^{K_r}} \left(\int_{S_{r-1}} \left( u_0X_1+\cdots+u_{r-1}X_r   \right)^Mdu_1\cdots du_{r-1}\right) dx_1\cdots dx_{K_r}  .         \]
  By the  Hermite-Genocchi   formula in Theorem \ref{hg}, 
              \[
              \int_{S_{r-1}} \left( u_0X_1+\cdots+u_{r-1}X_r   \right)^Mdu_1\cdots du_{r-1} =\frac{M!}{(M+r-1)!}    [X_1,\cdots, X_r]   t^{M+r-1}.
              \] 
             From the integral representation $(4)$ of the multiple harmonic sums
             and 
             \[
            [X_1,\cdots,X_r]t^{M+r-1}=X_1^M   \left[1,\frac{X_2}{X_1},\cdots,\frac{X_r}{X_1}\right]t^{M+r-1}      ,
             \] one has  
             \[
             \begin{split}
             &\;\;\;\;  A_{M,r}     \\
             &= \frac{M!}{(M+r-1)!}    \int_{S_{r-1}}      [X_1,\cdots, X_r]   t^{M+r-1} dx_1\cdots dx_{K_r}           \\
             &=\frac{M!}{(M+r-1)!}    \int_{S_{r-1}}  X_1^M   \left[1,\frac{X_2}{X_1},\cdots,\frac{X_r}{X_1}\right]t^{M+r-1}       dx_1\cdots dx_{K_r}           \\
             &=\frac{M!}{(M+r-1)!} \frac{1}{(M+1)^{k_1}} H_{M+1}^\star(k_2,\cdots,k_r).\\
             \end{split}
             \]
             As a result, 
             \[
             A_{M,r}\leq \frac{1}{(M+1)^r}\left(1+\frac{1}{2}+\cdots+\frac{1}{M+1} \right)^{r-1} \leq \frac{(2\mathrm{log}\;(M+2))^{r-1}}{(M+1)^r}.
             \]
             So we have
               \[
              \begin{split}
              &\;\;\;\;  \int_{(0,1)^{K_r}} \Big{|}  P_{M,r}(s; X_1,\cdots,X_r   ) -A_r(s;X_1,\cdots, X_r)    \Big{|}dx_1\cdots dx_{K_r}\\
              &\leq C_{\Omega,r}(M+r)^{r-1} \frac{(2\mathrm{log}\;(M+2))^{r-1}}{(M+1)^r}\\
              &\leq C_{{\bf k},\Omega} \frac{ \left(\mathrm{log}\,(M+2)\right)^{r-1}   }{M+1}.
              \end{split}
              \]             
              In conclusion, the Lemma is proved.
 $\hfill\Box$\\

  \begin{Thm}
  For  $(k_1,\cdots,k_r)\in (\mathbb{Z}^+)^r$. The function $$H_s^\star(k_1,\cdots,k_r)$$ is well-defined and analytic on $\mathrm{Re}\,(s)>0$.
 Furthermore, let
 \[
 X_0=1,X_j=x_1x_2\cdots x_{k_1+\cdots +k_j}, \;\;\; 1\leq j\leq r.
 \]
 Then one has the following integral representation
 \[
 H_s^\star(k_1,\cdots,k_r)=\int_{(0,1)^{k_1+\cdots+k_r}}[X_0,X_1,\cdots,X_r]t^{s+r-1}dx_1dx_2\cdots dx_{k_1+\cdots+k_r}
  \]
  on $\mathrm{Re}(s)>0$ .  \end{Thm}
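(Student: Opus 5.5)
We argue by induction on the depth $r$, proving simultaneously that the limit defining $H_s^\star(k_1,\cdots,k_r)$ exists and equals $I_s(k_1,\cdots,k_r)$ from Lemma \ref{limk}. Analyticity on $\mathrm{Re}(s)>0$ then comes for free from Lemma \ref{limk}. The base case $r=0$ is trivial, since $A_0(s)=1=H_s^\star(\varnothing)$. Assume the integral representation holds for all depths $<r$, in particular for every tail $(k_{j+1},\cdots,k_r)$ with $1\le j\le r$.

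The key step is to show that the expression inside the limit in the definition,
\[
H_M^\star(k_1,\cdots,k_r)-\sum_{j=1}^r G_M^\star(s;k_1,\cdots,k_j)H_s^\star(k_{j+1},\cdots,k_r),
\]
equals $\int_{(0,1)^{K_r}}P_{M,r}(s;X_1,\cdots,X_r)\,dx_1\cdots dx_{K_r}$, with $P_{M,r}$ as in Lemma \ref{ker}. The first term is handled by formula $(4)$. For the $j$-th term we split the variables: write $X_i=X_j\cdot Y_i$ for $i>j$, where $Y_i$ is the product of the variables $x_{K_j+1},\cdots,x_{K_i}$ only. The integrand
\[
X_j^sA_j(M;X_1,\cdots,X_j)A_{r-j}\bigl(s;X_{j+1}/X_j,\cdots,X_r/X_j\bigr)
\]
then factorizes into a function of $(x_1,\cdots,x_{K_j})$ times a function of $(x_{K_j+1},\cdots,x_{K_r})$, and Fubini applies. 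The first factor integrates to $G_M^\star(s;k_1,\cdots,k_j)$ by formula $(5)$, because $X_j^s=(x_1\cdots x_{K_j})^s$. The second factor integrates to $I_s(k_{j+1},\cdots,k_r)$, which equals $H_s^\star(k_{j+1},\cdots,k_r)$ by the induction hypothesis. Absolute integrability, needed for Fubini, follows from the boundedness in Lemma \ref{limk} and the polynomial form of $A_j(M;\cdot)$ given by Lemma \ref{sint}.

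Having identified the pre-limit expression with $\int P_{M,r}$, we let $M\to+\infty$. Lemma \ref{conv} gives $\int P_{M,r}\to\int A_r(s;X_1,\cdots,X_r)=I_s(k_1,\cdots,k_r)$, locally uniformly in $s$. Hence the limit in the definition exists, so $H_s^\star(k_1,\cdots,k_r)$ is well-defined and equals $I_s(k_1,\cdots,k_r)$. By Lemma \ref{limk} it is holomorphic. Alternatively, holomorphy follows from local uniform convergence of holomorphic functions in $s$: each pre-limit expression is holomorphic, since $G_M^\star$ is a finite sum and the $H_s^\star$ of lower depth are holomorphic by induction.

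The main obstacle is the bookkeeping in the factorization step. One must check that the ratios $X_i/X_j$ are exactly the $X$-variables attached to the tail index $(k_{j+1},\cdots,k_r)$ in the fresh variables $x_{K_j+1},\cdots,x_{K_r}$. One must also check that the power $X_j^s$ combines with $A_j(M;\cdot)=h_{M-1}(1,X_1,\cdots,X_j)$ to reproduce $(5)$ with the index $(k_1,\cdots,k_j)$. Finally, the ordering hypothesis $0<X_r<\cdots<X_1<1$ in Lemma \ref{ker} holds almost everywhere on the cube, which suffices for the integrals.
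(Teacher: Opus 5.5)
Your proposal is correct and follows the paper's proof. Both argue by induction on depth, identify the pre-limit expression with $\int_{(0,1)^{K_r}} P_{M,r}(s;X_1,\cdots,X_r)\,dx_1\cdots dx_{K_r}$, pass to the limit via Lemma \ref{conv}, and take holomorphy from Lemma \ref{limk}. Your version spells out the Fubini factorization through $(4)$, $(5)$ and the induction hypothesis, which the paper leaves implicit, and your depth-$0$ base case absorbs the paper's separate explicit $r=1$ computation.
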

  \noindent{\bf Proof:}
  For $r=1$ and $k_1\in \mathbb{Z}^+$, it is clear that
  \[
  \begin{split}
  &H_s^\star(k_1)=\mathop{\mathrm{lim}}_{M\rightarrow +\infty} \bigg{(}H_M^\star(k_1)-G_M^\star(s;k_1)     \bigg{)}=\sum_{m=1}^{+\infty} \left(  \frac{1}{m^{k_1}}-\frac{1}{(m+s)^{k_1}}    \right)
   \end{split}
  \]
  is absolutely convergent on $\mathrm{Re}(s)>0$. Thus $H_s^\star(k_1)$ is well-defined and analytic on $\mathrm{Re}(s)>0$.
Since
 \[
\frac{1}{m^{k_1}}-\frac{1}{(m+s)^{k_1}}=\int_{(0,1)^{k_1}} (x_1\cdots x_{k_1})^{m-1}\left[ 1-   (x_1\cdots x_{k_1})^{s}    \right]dx_1\cdots dx_{k_1},
\] one has
\[
\begin{split}
&\;\;\;\; H_s^\star(k_1)\\
&= \int_{(0,1)^{k_1}}\sum_{m=1}^{+\infty} (x_1\cdots x_{k_1})^{m-1}\left[ 1-   (x_1\cdots x_{k_1})^{s}    \right]dx_1\cdots dx_{k_1}\\
&=\int_{(0,1)^{k_1}}\frac{ 1-   (x_1\cdots x_{k_1})^{s} }{1-x_1\cdots x_{k_1}     }   dx_1\cdots dx_{k_1}\\
&= \int_{(0,1)^{k_1}} [1,X_1]t^s    dx_1\cdots dx_{k_1} .  \\
   \end{split}
\]
By induction, assuming that  $H_s^\star(k_1,\cdots,k_r)$ is well-defined and analytic on $\mathrm{Re}(s)>0$ for $r<n$
 and \[
  H_s^\star(k_1,\cdots,k_r)=\int_{(0,1)^{k_1+\cdots+k_r}} [1,X_1,\cdots, X_r]t^{s+r-1}dx_1\cdots dx_{k_1+\cdots+k_r}
  \] on $\mathrm{Re}(s)>0$ .
  For $r=n$ ,  by Lemma \ref{limk}, 
    \[
     I_s(k_1,\cdots,k_r):=\int_{(0,1)^{K_r}} [1,X_1,\cdots ,X_r] t^{s+r-1} dx_1dx_2\cdots dx_{K_r}
     \]
     is holomorphic on $\mathrm{Re}\,s>0$.
     For $r=n, M\geq 1$, by Lemma \ref{ker}, $(iv)$, Lemma \ref{limk} and  Lemma \ref{conv},
     the function \[
     \begin{split}
     &\;\;\;\;H_s^\star(k_1,\cdots,k_r)\\
     &=\mathop{\mathrm{lim}}_{M\rightarrow +\infty}     \Big{(}   H_M^\star(k_1,\cdots,k_r)-   \sum_{j=1}^r  G_M^\star(s;k_1,\cdots, k_j)H_s^\star (k_{j+1},\cdots, k_r) \Big{)}    \\
     &=     \mathop{\mathrm{lim}}_{M\rightarrow +\infty}   \int_{(0,1)^{K_r}}   P_{M,r}(s;X_1,\cdots,X_r)dx_1\cdots dx_{K_r}                   \\
     &=   \int_{(0,1)^{K_r}}A_r(s;X_1,\cdots, X_r)    dx_1\cdots dx_{K_r}        \end{split}
     \]
     is well-defined and holomorphic on $\mathrm{Re}\,s>0$.     
  $\hfill\Box$\\
  
  \section{Finite zeta-star correspondence in general cases}
  
  In this section,  we will give a complete proof of Theorem \ref{com}. Many basic properties of finite zeta-star correspondence are also discussed in this section.
  \begin{lem}\label{dom}
  For a fixed $s$ with $\mathrm{Re}(s)>0$ and $r\geq 1$, there are  constants $C(s)>0$ and $D_s>1$ such that
  \[
 \Big{|} H_s^\star(k_1,\cdots,k_r,k_{r+1})-H_s^\star(k_1,\cdots,k_r)\Big{|}\leq \frac{C(s)}{D_s^r}  \]
 for all $(k_1,\cdots,k_r,k_{r+1})\in \left( \mathbb{Z}^+ \right)^{r+1}$. In fact, $D_s$ can be defined by 
 \[
 D_s=\mathrm{min}\big{\{}2-\epsilon, 1+\mathrm{Re}\,(s)-\epsilon\big{\}}
 \]
 for any $ 0<\epsilon < \mathrm{min}\big{\{} 1, \mathrm{Re}\,(s)\big{\}}$.
  \end{lem}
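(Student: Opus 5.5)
The plan is to work entirely with the integral representation of Theorem~\ref{noint}$(ii)$. First I reduce the difference $H_s^\star(k_1,\cdots,k_r,k_{r+1})-H_s^\star(k_1,\cdots,k_r)$ to a single divided difference, using formula $(2)$ from the proof of Lemma~\ref{plus1}. By the symmetry of divided differences I may place $X_{r+1}$ first. Applying $(2)$ to $g(t)=t^{s+r-1}$ at the points $(X_{r+1},1,X_1,\cdots,X_r)$ gives
\[
[1,X_1,\cdots,X_{r+1}]t^{s+r}=X_{r+1}[1,X_1,\cdots,X_{r+1}]t^{s+r-1}+[1,X_1,\cdots,X_r]t^{s+r-1}.
\]
The last term does not depend on $x_{K_r+1},\cdots,x_{K_{r+1}}$, so integrating these variables out over $(0,1)$ shows
\[
H_s^\star(k_1,\cdots,k_{r+1})-H_s^\star(k_1,\cdots,k_r)=\int_{(0,1)^{K_{r+1}}} X_{r+1}\,[1,X_1,\cdots,X_{r+1}]t^{s+r-1}\,dx_1\cdots dx_{K_{r+1}}.
\]
It therefore suffices to bound this integral by $C(s)D_s^{-r}$, uniformly in $(k_1,\cdots,k_{r+1})$. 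For $s=n$ a positive integer this is the identity from the proof of Proposition~\ref{or}: the difference equals the sum over $n\ge m_1\ge\cdots\ge m_{r+1}\ge 2$. That sum is at most the analogous sum with all $k_i=1$, which is of size $2^{-r}$ times a polynomial in $r$. This explains the constant $2-\epsilon$.

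Next I will expand the kernel in a convergent series, to make the heuristic work for complex $s$. Iterating Lemma~\ref{ker}$(ii)$, or expanding $t^{s+r-1}=t^{r-1}\cdot t^{s}$ and using Lemma~\ref{sint} on the polynomial part, should write $X_{r+1}[1,X_1,\cdots,X_{r+1}]t^{s+r-1}$ as a sum of monomials $X_1^{a_1}\cdots X_{r+1}^{a_{r+1}}$ with coefficients $c_s(a)$. After integration each monomial becomes $\prod_i (m_i+\theta_i)^{-k_i}$, where each shift $\theta_i$ is either $0$ or $s$, and $m_1\ge\cdots\ge m_{r+1}\ge 1$. This mirrors the disjoint decomposition in the Remark after Theorem~\ref{noint}. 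Taking absolute values, $|(m+\theta)^{-k}|\le |m+\theta|^{-1}$ whenever $|m+\theta|\ge 1$. This reduces the problem to the worst case $k_1=\cdots=k_{r+1}=1$, which is the only place uniformity in ${\bf k}$ enters. In the reduced sum, the factors with no shift and $m\ge 2$ contribute geometric factors at most $\frac12$ each. The factors shifted by $s$ contribute at most $|1+s|^{-1}\le (1+\mathrm{Re}\,s)^{-1}$ each. The number of ways to choose which factors are shifted, and the near-diagonal multiplicities, grow only polynomially in $r$. Absorbing them with the $\epsilon$ gives the decay $\min\{2-\epsilon,1+\mathrm{Re}\,s-\epsilon\}^{-r}$.

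The main obstacle is controlling the coefficients $c_s(a)$, that is, the cancellation between the $t^s$ and $t^M$ pieces when $s$ is not real. A naive Hermite--Genocchi bound loses: $f^{(r+1)}$ for $f(t)=t^{s+r-1}$ carries the factor $(s+r-1)\cdots(s-1)$, which grows factorially. I will therefore avoid differentiating and use the recursive formula Lemma~\ref{ker}$(i)$ directly. The kernel for $r+1$ points is $(1-X_1)^{-1}$ times the kernel for $r$ points, minus $X_1^s(1-X_1)^{-1}$ times the rescaled kernel. After integrating over the $k_1$ variables of $X_1$, each of these two terms produces one step of the sum over $m_1$, weighted by $m_1^{-k_1}$ or $(m_1+s)^{-k_1}$ respectively. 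An induction on $r$ along this recursion, with hypothesis ``the integrated absolute kernel for depth $r$ is at most $C(s)D_s^{-r}$,'' should close, provided the one-step factor is bounded by $D_s^{-1}$. Verifying that one-step estimate, including the tail near $m_1=1$ where $|1-X_1^s|\le C(1-X_1)$ from $(6)$ is needed, is where I expect the real work to lie.
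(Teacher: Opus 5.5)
Your first step is correct and matches the paper's: by formula $(3)$, $X_{r+1}[1,X_1,\cdots,X_{r+1}]t^{s+r-1}$ is exactly the paper's integrand $\frac{X_{r+1}}{1-X_{r+1}}\bigl([1,X_1,\cdots,X_r]f-[X_1,\cdots,X_{r+1}]f\bigr)$ with $f=t^{s+r-1}$.

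The gap is the estimate of this integral, and the mechanism you propose for it fails. Suppose you expand the kernel into monomials, via Lemma~\ref{ker}(ii) with $s-1$ in place of $s$ or via Lemma~\ref{bgx}, and take absolute values termwise. The resulting pieces are not individually summable. Already the unshifted piece $X_{r+1}/P_{r+1}$ integrates to $\sum_{m_1\ge\cdots\ge m_{r+1}\ge 2}\prod_i m_i^{-k_i}$, which diverges as soon as $k_1=1$. Convergence comes only from cancellation between the $t^{M}$ and $t^{s}$ pieces; this is why $H_s^\star$ had to be defined through a limit in $M$. So after your reduction to the worst case $k_1=\cdots=k_{r+1}=1$, the bound you obtain is $+\infty$. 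Your integer-$s$ heuristic hides this only because the sum there is cut off at $m_1\le n$; the all-ones sum tends to $+\infty$ once the cutoff is removed.

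The fallback induction has the same defect. You sum the two terms of Lemma~\ref{ker}(i) separately over $m_1$, with weights $m_1^{-k_1}$ and $(m_1+s)^{-k_1}$. When $k_1=1$ both series diverge. The two terms carry different kernels ($A_r$ at $X_2,\cdots$ versus $A_r$ at $X_2/X_1,\cdots$), so the cancellation cannot be pulled out as a factor $1-X_1^s$ and handled by $(6)$. In any case, the one-step estimate you defer is the whole content of the lemma.

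The obstacle you cite to justify this detour is not real, and removing it gives the paper's proof. In Hermite--Genocchi the $n$ falling factors of $f^{(n)}$ are always paired with the simplex volume $1/n!$. For $f=t^{s+r-1}$ the combination therefore grows only polynomially in $r$ (like $r^{\mathrm{Re}(s)-1}$), not factorially.

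The paper proceeds as follows, with $\sigma=\mathrm{Re}(s)$:
\begin{enumerate}
\item It bounds $\bigl|[1,X_1,\cdots,X_r]f-[X_1,\cdots,X_{r+1}]f\bigr|\le C_1r^{\sigma}(1+X_{r+1}^{\sigma-1})$, using $\int_{S_r}(\cdots)^{\sigma-1}\le\Gamma(\sigma)/\Gamma(r+\sigma)$ when $\sigma<1$.
\item All the exponential decay then comes from the prefactor: $\int_{(0,1)^{K_{r+1}}}\frac{X_{r+1}}{1-X_{r+1}}(1+X_{r+1}^{\sigma-1})\,dx_1\cdots dx_{K_{r+1}}=\sum_{m\ge1}\bigl((m+1)^{-K_{r+1}}+(m+\sigma)^{-K_{r+1}}\bigr)$.
\item Since $K_{r+1}\ge r+1$, this sum is $O(2^{-r}+(1+\sigma)^{-r})$, which together with the factor $r^{\sigma}$ gives $C(s)D_s^{-r}$.
\end{enumerate}
Uniformity in $\mathbf{k}$ also comes for free from $K_{r+1}\ge r+1$, so no reduction to $k_i=1$ is needed. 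The same bookkeeping works directly on your order-$(r+1)$ form, where $f^{(r+1)}\propto t^{s-2}$. There the simplex integral is controlled using $L\ge u_0+(1-u_0)X_{r+1}$ for the Hermite--Genocchi argument $L=u_0+u_1X_1+\cdots+u_{r+1}X_{r+1}$.
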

   \noindent{\bf Proof:}
   Define $\sigma=\mathrm{Re}(s)>0$.
   By Theorem \ref{noint}, 
   \[
   H_s^\star(k_1,\cdots,k_r,k_{r+1})=\int_{(0,1)^{K_{r+1}}} [1,X_1,\cdots, X_r,X_{r+1}]t^{s+r}dx_1\cdots dx_{K_{r+1}},   \]
    \[
  H_s^\star(k_1,\cdots,k_r)=\int_{(0,1)^{K_r}} [1,X_1,\cdots, X_r]t^{s+r-1}dx_1\cdots dx_{K_r}.
  \]   
  Here 
  \[K_i=k_1+\cdots+k_i,\qquad X_i=x_1x_2\cdots x_{K_i}, \qquad i=1,\cdots,r+1.
  \]
  By the symmetry of divided differences and Lemma \ref{plus1}, 
  \[
  \left[1,X_1,\cdots, X_r,X_{r+1}\right] t^{s+r} =\frac{[1,X_1,\cdots,X_r] t^{s+r-1}-X_{r+1}[X_1,\cdots, X_r,X_{r+1}] t^{s+r-1}    }{1-X_{r+1}}.
     \]
     It follows that
     \[
     \begin{split}
     &\;\;\;\;  H_s^\star(k_1,\cdots,k_r,k_{r+1})- H_s^\star(k_1,\cdots,k_r)        \\
     &= \int_{(0,1)^{K_{r+1}}}   \left(  \left[1,X_1,\cdots, X_r,X_{r+1}\right] t^{s+r} - [1,X_1,\cdots, X_r]t^{s+r-1}   \right)      dx_1\cdots dx_{K_{r+1}}        \\
     &=  \int_{(0,1)^{K_{r+1}}}   \left( \frac{[1,X_1,\cdots,X_r] t^{s+r-1}-X_{r+1}[X_1,\cdots, X_r,X_{r+1}] t^{s+r-1}    }{1-X_{r+1}}- [1,X_1,\cdots, X_r]t^{s+r-1}  \right)  \\
     &\qquad    dx_1\cdots dx_{K_{r+1}}                 \\
    &=  \int_{(0,1)^{K_{r+1}}}   \left( \frac{X_{r+1}\left([1,X_1,\cdots,X_r] t^{s+r-1}-[X_1,\cdots, X_r,X_{r+1}] t^{s+r-1}  \right)  }{1-X_{r+1}} \right)    dx_1\cdots dx_{K_{r+1}}                 \\
    &=\int_{(0,1)^{K_{r+1}}}   \frac{X_{r+1}}{1-X_{r+1}}    \left([1,X_1,\cdots,X_r] t^{s+r-1}-[X_1,\cdots, X_r,X_{r+1}] t^{s+r-1}   \right)    dx_1\cdots dx_{K_{r+1}}               \end{split}
     \]
     
     By the Hermite-Genocchi formula,  we have 
  \[
  [1,X_1,\cdots, X_r]f=\int_{S_r} f^{(r)}(u_0+u_1X_1+\cdots+u_rX_r) du_1\cdots du_r,
  \]
  \[
  [X_1,\cdots, X_r,X_{r+1}]f=\int_{S_r} f^{(r)}(u_0X_{r+1}+u_1X_1+\cdots+u_rX_r) du_1\cdots du_r
  \] 
   for $f=t^{s+r-1}$.  Here
     \[
  S_r=\big{\{}(u_1,\cdots,u_r)\in \mathbb{R}^r\,\big{|}\, u_j\geq 0, \sum_{j=1}^r u_j\leq 1\big{\}},\quad  u_0=1-\sum_{j=1}^r u_j.
  \]
  By direct calculation, we have
  \[
  \begin{split}
  &\;\;\;\;  \Big{|} [1,X_1,\cdots, X_r]f -  [X_1,\cdots, X_r,X_{r+1}]f  \Big{|} \\
  &\leq \int_{S_r} \Big{|} f^{(r)}(u_0+u_1X_1+\cdots+u_rX_r) -f^{(r)}(u_0X_{r+1}+u_1X_1+\cdots+u_rX_r)    \Big{|} du_1\cdots du_r\\
  &\leq \Bigg{|}  \frac{ \Gamma(s+r) }{\Gamma(s)}  \Bigg{|} \int_{S_r} \Bigg{|} \Big{[}(u_0+u_1X_1+\cdots+u_rX_r)^{s-1}\\
  &\;\;\;\; -(u_0X_{r+1}+u_1X_1+\cdots+u_rX_r)^{s-1} \Big{]}   \Bigg{|} du_1\cdots du_r\\
  &\leq \Bigg{|}  \frac{ \Gamma(s+r) }{\Gamma(s)}  \Bigg{|} \Bigg{(} \int_{S_r}  (u_0+u_1X_1+\cdots+u_rX_r)^{\sigma-1} du_1\cdots du_r\\
  &\;\;\;\; +\int_{S_r} (u_0X_{r+1}+u_1X_1+\cdots+u_rX_r)^{\sigma-1}  du_1\cdots du_r \Bigg{)}   \\
   \end{split}
  \]
  If $\sigma\geq 1$, then 
  \[
  \int_{S_r}  (u_0+u_1X_1+\cdots+u_rX_r)^{\sigma-1} du_1\cdots du_r\leq \frac{1}{r!},  \]
  \[
  \int_{S_r} (u_0X_{r+1}+u_1X_1+\cdots+u_rX_r)^{\sigma-1}  du_1\cdots du_r \leq \frac{1}{r!}. \]
  If $0<\sigma<1$, then
  \[
  \begin{split}
  &\;\;\;\;   \int_{S_r}  (u_0+u_1X_1+\cdots+u_rX_r)^{\sigma-1} du_1\cdots du_r           \\
  &\leq      \int_{S_r}  (u_0+u_1X_r+\cdots+u_rX_r)^{\sigma-1} du_1\cdots du_r               \\
  &\leq         \int_{S_r}  [1-(u_1+\cdots+u_r)(1-X_r)] ^{\sigma-1} du_1\cdots du_r             \\
  &\leq    \int_{S_r}  [1-(u_1+\cdots+u_r)] ^{\sigma-1} du_1\cdots du_r     \\
  &\leq       \int_{0<v_1<\cdots<v_r<1}  (1-v_r)^{\sigma-1} dv_1\cdots dv_r                          \\
  &\leq \int_0^1 \frac{  v_r^{r-1}(1-v_r)^{\sigma-1}  }{(r-1)!} dv_r     \\
  &\leq \frac{1}{(r-1)!}\frac{ \Gamma(r)\Gamma(\sigma)    }{\Gamma(r+\sigma)}\\
  &\leq \frac{\Gamma(\sigma)}{\Gamma(r+\sigma)}
    \end{split}
  \]
  and similarly
   \[
  \begin{split}
  &\;\;\;\;   \int_{S_r}  (u_0X_{r+1}+u_1X_1+\cdots+u_rX_r)^{\sigma-1} du_1\cdots du_r           \\
  &=      \int_{S_r}  (u_0X_1+u_1X_2+\cdots+u_rX_{r+1})^{\sigma-1} du_1\cdots du_r               \\
  &\leq   X_1^{\sigma-1}   \int_{S_r}  \left(u_0+u_1\frac{X_2}{X_1}+\cdots+u_r\frac{X_{r+1}}{X_1}\right)^{\sigma-1} du_1\cdots du_r   \\
  &\leq   X_1^{\sigma-1}   \int_{S_r}  \left(u_0+u_1\frac{X_{r+1}}{X_1}+\cdots+u_r\frac{X_{r+1}}{X_1}\right)^{\sigma-1} du_1\cdots du_r   \\  &\leq   X_1^{\sigma-1}   \int_{S_r}  \left(1-(u_1+\cdots+u_r)\left(1-\frac{X_{r+1}}{X_1}\right)\right)^{\sigma-1} du_1\cdots du_r\\
  &\leq \frac{\Gamma(\sigma)}{\Gamma(r+\sigma)} X_1^{\sigma-1} \\
  &\leq  \frac{\Gamma(\sigma)}{\Gamma(r+\sigma)} X_{r+1}^{\sigma-1}     \end{split}
  \]
  
  In conclusion, for $\sigma\geq 1$, 
  \[
  \Big{|} [1,X_1,\cdots, X_r]f -  [X_1,\cdots, X_r,X_{r+1}]f  \Big{|}\leq \Bigg{|}  \frac{ \Gamma(s+r) }{\Gamma(s)} \Bigg{|}  \frac{2}{r!};\]
  For $0<\sigma<1$, 
  \[
  \Big{|} [1,X_1,\cdots, X_r]f -  [X_1,\cdots, X_r,X_{r+1}]f  \Big{|}\leq \Bigg{|}  \frac{ \Gamma(s+r) }{\Gamma(s)} \Bigg{|}  \frac{\Gamma(\sigma)}{\Gamma(r+\sigma)} \left( 1+X_{r+1}^{\sigma-1}    \right).\]
  By the asymptotic formula of Gamma function,
  it follows that 
  \[
  \frac{\Gamma(r+s)    }{r!}=\frac{\Gamma(r+s)}{\Gamma(r+1)}\sim r^{s-1},\qquad \frac{\Gamma(r+s)}{\Gamma(r+\sigma)}\sim r^{s-\sigma}  \]
  for $r\rightarrow +\infty$.
  In both cases, there is a constant $C_1$ (depends only on $s$) such that 
   \[
  \Big{|} [1,X_1,\cdots, X_r]f -  [X_1,\cdots, X_r,X_{r+1}]f  \Big{|}\leq C_1 r^{\sigma} (1+X_{r+1}^{\sigma-1}      )
  \]  
  Thus 
  \[
     \begin{split}
     &\;\;\;\; \Big{|} H_s^\star(k_1,\cdots,k_r,k_{r+1})- H_s^\star(k_1,\cdots,k_r)  \Big{|}      \\
     &\leq  \int_{(0,1)^{K_{r+1}}}   \frac{X_{r+1}}{1-X_{r+1}}\Big{|}[1,X_1,\cdots,X_r] f-[X_1,\cdots, X_r,X_{r+1}] f \Big{|}   dx_1\cdots dx_{K_{r+1}}            \\
     &\leq C_1r^{\sigma}  \int_{(0,1)^{K_{r+1}}}   \frac{X_{r+1}}{1-X_{r+1}} \left(1+X_{r+1} ^{\sigma-1}\right) dx_1\cdots dx_{K_{r+1}}   \\
     &\leq   C_1r^{\sigma}  \int_{(0,1)^{K_{r+1}}}  \sum_{m\geq 1} \left(X_{r+1}^m+X_{r+1} ^{m+\sigma-1}\right) dx_1\cdots dx_{K_{r+1}}   \\
     &\leq C_1r^{\sigma} \sum_{m\geq 1} \left( \frac{1}{(m+1)^{K_{r+1}}} +  \frac{1}{(m+\sigma)^{K_{r+1}}}      \right)\\
     &\leq C_1r^{\sigma} \sum_{m\geq 1} \left( \frac{1}{(m+1)^{{r+1}}} +  \frac{1}{(m+\sigma)^{{r+1}}}      \right)\\
     &\leq C_1 r^{\sigma}  \left( \frac{1}{2^{r+1}}+\frac{1}{(1+\sigma)^{r+1}}+  \int_{2}^{+\infty} \left( \frac{1}{x^{r+1}} +\frac{1}{(x+\sigma-1)^{r+1}} \right)         dx        \right)    \\
     &\leq C_1 r^{\sigma}  \left( \frac{1}{2^{r+1}}+\frac{1}{(1+\sigma)^{r+1}}\right)  \left(1+\frac{1}{r}\right)   \\
      &\leq 2\, C_1 r^{\sigma}  \left( \frac{1}{2^{r+1}}+\frac{1}{(1+\sigma)^{r+1}}\right)        \end{split}
     \]
     As a result, the Lemma is proved.        $\hfill\Box$\\
     
     By Lemma \ref{dom}, for $\mathrm{Re}\,(s)>0$ and ${\bf k}=(k_1,\cdots,k_r,\cdots)$, the 
     sequence 
     \[
     H_s^\star(k_1,\cdots,k_r), r\geq 1
     \]
     is a Cauchy sequence in $\mathbb{C}$. So the limit
     \[
     \mathop{\mathrm{lim}}_{r\rightarrow +\infty} H_s^\star(k_1,\cdots,k_r)     \]
     exists.  Thus Theorem \ref{com} $(i)$ is proved.

For the proof of Theorem \ref{com}, $(ii)$, we use the total order structure on 
$$\widehat{\mathcal{S}}=\Big{\{}(k_1,\cdots,k_r)\,\Big{|}\,k_1,\cdots,k_r\in\left(\mathbb{Z}^{+}\right)^r, r\geq 1\Big{\}}$$
which is introduced  in Section \ref{fzs}.

   To deal with the order structure of $H_s^\star(k_1,\cdots,k_r)$ for $s>1$, define 
 \[
 \left(\mathcal{F}_kh\right)(z):=\int_{(0,z)\times (0,1)^{k-1}} \frac{h(1)-h( u_1\cdots u_k )     }{1-u_1\cdots u_k} du_1\cdots du_k, \quad 0\leq z\leq 1,\quad k\in \mathbb{Z}^+
 \]
 for any function $h$.
 For $$(k_1,\cdots,k_r)\in \left(\mathbb{Z}^+\right)^r$$ and  a function $h$, let
 \[
 \mathcal{L}_{\varnothing}(h)=h(1),
 \]
 \[
 \mathcal{L}_{k_1,\cdots,k_r}(h):=   \int_{(0,1)^{K_r}} [1,X_1,\cdots,X_r] \left( t^{r-1}h(t)\right)dx_1\cdots dx_{K_r},
 \]
 which 
 \[
 K_j=k_1+\cdots +k_j,\qquad X_j=x_1x_2\cdots x_{K_j}, \qquad 1\leq j\leq r.
 \]
 It is clear that
 \[
 H_s^\star(k_1,\cdots,k_r)=\mathcal{L}_{k_1,\cdots,k_r}(t^s).
 \]
 
 \begin{lem}\label{op}
 For $r\geq 1$, one has 
 \[
 \mathcal{L}_{k_1,\cdots,k_r}(h)=\mathcal{L}_{k_2,\cdots,k_r}\left( \mathcal{F}_{k_1}(h)   \right).
 \]
 Thus 
 \[
 H_s^\star(k_1,\cdots,k_r)=\left(\mathcal{F}_{k_r}\cdots      \mathcal{F}_{k_1} t^s\right)(1)
 \]
 \end{lem}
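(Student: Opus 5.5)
The identity will come from the recursion of Lemma \ref{plus1} together with Fubini in the variables $x_1,\dots,x_{k_1}$. Write $Y=X_1=x_1\cdots x_{k_1}$ and $X_j=Y\,X_j'$ for $j\ge 2$, where $X_j'=x_{k_1+1}\cdots x_{K_j}$ are exactly the variables attached to the tuple $(k_2,\dots,k_r)$.

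\textbf{Step 1: peel off the first node.} Apply Lemma \ref{plus1} with $g=t^{r-2}h(t)$:
\[
[1,X_1,\dots,X_r]\bigl(t^{r-1}h\bigr)=\frac{[1,X_2,\dots,X_r](t^{r-2}h)-Y[Y,X_2,\dots,X_r](t^{r-2}h)}{1-Y}.
\]
In the second term, rescale the nodes by $Y$ and use homogeneity, $[Y,Y X_2',\dots]F=Y^{-(r-1)}[1,X_2',\dots]F(Yt)$. Since $F(Yt)=Y^{r-2}t^{r-2}h(Yt)$, this turns $Y[Y,X_2,\dots,X_r](t^{r-2}h)$ into $[1,X_2',\dots,X_r'](t^{r-2}h(Yt))$.

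\textbf{Step 2: integrate out $x_1,\dots,x_{k_1}$.} The first term also needs to be written in the primed variables. I will do this by an induction on $r$, establishing the more symmetric identity
\[
\int_{(0,1)^{k_1}}\frac{[1,Y Z_2,\dots,Y Z_r](t^{r-2}h)-[1,Z_2,\dots,Z_r](t^{r-2}h(Yt))}{1-Y}\,dx_1\cdots dx_{k_1}=[1,Z_2,\dots,Z_r]\bigl(t^{r-2}\mathcal F_{k_1}h\bigr),
\]
to be read after the remaining integrations over the $Z$'s. The integrand $(h(1)-h(Yt))/(1-Y)$ then matches $\mathcal F_{k_1}h$ in the form $(\mathcal F_{k_1}h)(t)=t\int_{(0,1)^{k_1}}\frac{h(1)-h(tY)}{1-tY}$, which follows from the substitution $u_1=t x_1$. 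This substitution is where the upper limit $z$ in the definition of $\mathcal F_k$ enters.

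A cleaner route, which I would try first, is to check the identity on monomials $h=t^s$ for integer $s=M$ and then extend. For $h(t)=t^M$ one has $\mathcal F_k t^M$ explicitly: it equals $\sum_{m=1}^{M} t^m/m^{k}$ up to the $1-Y^M$ expansion. Then $\mathcal L_{k_2,\dots,k_r}$ applied to it gives, via (4), the nested sum $H_M^\star(k_1,\dots,k_r)$ with the outer index shifted correctly. Both sides are linear in $h$ and continuous under the bounds of Lemma \ref{limk}, so the identity passes from monomials to $t^s$. For general $h$ it follows by linearity on polynomials and density, or by the analytic interpolation in $s$ used in Theorem \ref{noint}, since both sides are holomorphic in $s$ and agree at the integers with polynomial growth.

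\textbf{Step 3: iterate.} Applying the operator identity $r$ times gives
\[
H_s^\star(k_1,\dots,k_r)=\mathcal L_{k_1,\dots,k_r}(t^s)=\mathcal L_\varnothing\bigl(\mathcal F_{k_r}\cdots\mathcal F_{k_1}t^s\bigr)=\bigl(\mathcal F_{k_r}\cdots\mathcal F_{k_1}t^s\bigr)(1).
\]

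\textbf{Main obstacle.} The hard part is the bookkeeping in Step 2. One must show that the term $[1,X_2,\dots,X_r]$, whose nodes still carry the factor $Y$, combines with the rescaled term to produce exactly $\mathcal F_{k_1}h$ evaluated at the inner nodes. One must also justify Fubini and the interchange of limits near $Y\to1$, using Hermite--Genocchi bounds as in Lemma \ref{limk}.
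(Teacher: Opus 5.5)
\textbf{There is a genuine gap.} Your Step 1 is correct. But it splits off the node $X_1$ and leaves $[1,YX_2',\dots,YX_r'](t^{r-2}h)$ with every node still scaled by $Y$, so it makes no progress toward the identity. By Step 1, the identity you state in Step 2 is literally the lemma before the final integrations, and the promised induction on $r$ is never supplied.

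The matching you sketch also fails. Your integrand carries the denominator $1-Y$, which is only the node difference between $1$ and $X_1$. But $\mathcal F_{k_1}h$ needs the kernel $\frac{h(1)-h(tY)}{1-tY}$, with $t$ inside the denominator, and nothing in your argument produces it.

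Your monomial route does prove the identity for polynomial $h$: indeed $\mathcal F_k t^M=\sum_{m=1}^M t^m/m^k$ exactly, and (4) gives $\sum_{m=1}^M m^{-k_1}H_m^\star(k_2,\dots,k_r)=H_M^\star(k_1,\dots,k_r)$. The passage beyond polynomials, however, is unjustified, for three reasons.
\begin{itemize}
\item Density fails as stated. $\mathcal L_{k_1,\dots,k_r}$ is not continuous under uniform convergence: for $r=k_1=1$ it is $\int_0^1\frac{h(1)-h(x)}{1-x}\,dx$, whose kernel is not integrable. So density needs $C^r$-type control, which $t^s$ lacks at $0$ when $0<\mathrm{Re}\,s<r$.
\item Analytic interpolation in $s$ only reaches the family $h=t^s$. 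It would also need holomorphy and growth bounds for $\mathcal L_{k_2,\dots,k_r}(\mathcal F_{k_1}t^s)$, which Lemma \ref{limk} does not give, since it only controls the left-hand side.
\item Step 3 applies the identity to $h=\mathcal F_{k_j}\cdots\mathcal F_{k_1}t^s$, which are not of the form $t^s$.
\end{itemize}

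\textbf{The missing idea is to eliminate the node $1$ rather than $X_1$.} Since $t^{r-1}h(1)$ is a polynomial of degree $r-1$, its order-$r$ divided difference vanishes, so you may replace $h$ by $h-h(1)$. In the explicit formula of Definition \ref{div}, the node $1$ then contributes nothing. Write $X_{j+1}=X_1z_j$, with $z_0=1$ and $z_j=X_{j+1}'$ for $j\geq 1$. Factoring $X_1^{r-1}$ out of the remaining node differences gives, pointwise,
\[
[1,X_1,\dots,X_r]\bigl(t^{r-1}h(t)\bigr)=[z_0,\dots,z_{r-1}]\Bigl(t^{r-1}\,\frac{h(1)-h(X_1t)}{1-X_1t}\Bigr).
\]
The $z_j$ do not involve $x_1,\dots,x_{k_1}$, so integration over these variables passes inside the divided difference. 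The substitution $u_1=tx_1$ you already noted then turns the integrand into $t^{r-2}(\mathcal F_{k_1}h)(t)$, which is exactly the integrand of $\mathcal L_{k_2,\dots,k_r}(\mathcal F_{k_1}h)$.

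This is the paper's proof. It is an exact identity for every $h$ for which the integrals make sense, with no density or interpolation needed. It also iterates directly to give $H_s^\star(k_1,\dots,k_r)=(\mathcal F_{k_r}\cdots\mathcal F_{k_1}t^s)(1)$.
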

   \noindent{\bf Proof:} For $r=1$,
   \[
   \mathcal{L}_{k_1}(h)=\int_{(0,1)^{K_1}} [1,X_1](h(t))dx_1\cdots dx_{K_1}=(\mathcal{F}_{k_1}(h))(1).
   \]
   For $r\geq 2$ and fixed $x_{K_1+1},\cdots,x_{K_r}$, denote by
   \[
   z_0=1,\qquad z_i=x_{K_1+1}\cdots x_{K_{i+1}},\qquad 1\leq i\leq r-1.
   \]
   Then 
   \[
   X_1=X_1 z_0,\quad X_2=X_1z_1,\quad,\cdots, \quad X_r=X_1z_{r-1}.
   \]
   Since $t^{r-1}h(1)$ is a polynomial of degree $\leq r-1$, its $r$-th order divided difference is $0$.
   Therefore
   \[
   \begin{split}
   &\;\;\;\;  [1,X_1z_0,\cdots, X_1z_{r-1}] t^{r-1}h(t)       \\
   &=     [1,X_1z_0,\cdots, X_1z_{r-1}] t^{r-1}\left(h(t)-h(1) \right)          \\
   &=  \sum_{i=0}^{r-1}   \frac{ z_i^{r-1}\left( h(X_1z_i)-h(1)   \right)   }{(X_1z_i-1) \prod_{\substack{0\leq j\leq r-1   \\  j\neq i       }} (z_i-z_j) }.\\
   \end{split}
   \]
   As 
   \[
   \begin{split}
  &\;\;\;\; \int_{(0,1)^{k_1}} \frac{ h(1)-h(X_1z_i)  }{1-X_1z_i}dx_1\cdots dx_{k_1}\\
  &=\frac{1}{z_i} \int_{(0,z_i)\times (0,1)^{k-1}} \frac{h(1)-h( x_1\cdots x_{k_1})     }{1-x_1\cdots x_{k_1}} dx_1\cdots dx_{k_1}\\
  &=\frac{(\mathcal{F}_{k_1}h)(z_i) }{z_i}, \\
  \end{split}
   \]
   we have 
   \[
   \int_{(0,1)^{k_1}} [1,X_1z_0,\cdots, X_1z_{r-1}] t^{r-1}h(t) dx_1\cdots dx_{k_1}=[z_0,\cdots, z_{r-1}]\left( t^{r-2}(\mathcal{F}_{k_1}h)(t)    \right).
      \]
      By integrating with respect to $x_{K_1+1},\cdots, x_{K_r}$, we have 
      \[
      \mathcal{L}_{k_1,\cdots,k_r}(h)=\mathcal{L}_{k_2,\cdots,k_r}\left( \mathcal{F}_{k_1}h    \right).
      \]
      By induction, one has
      \[
 H_s^\star(k_1,\cdots,k_r)=   \mathcal{L}_{k_1,\cdots,k_r}(t^s)=\left(\mathcal{F}_{k_r}\cdots      \mathcal{F}_{k_1} t^s\right)(1)
 \]   
    $\hfill\Box$\\
   
\begin{lem}\label{e1}
For $\mathrm{Re}\,(s)>0$, one has
\[
H_s^\star(\{1\}^m)=\frac{s}{m!}\int^1_0\left( -\mathrm{log}\;t \right)^m(1-t)^{s-1}dt.
\]
\end{lem}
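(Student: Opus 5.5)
Since $k_1=\cdots=k_m=1$, every operator in Lemma \ref{op} is $\mathcal{F}_1$, so $H_s^\star(\{1\}^m)=(\mathcal{F}_1^m t^s)(1)$, where
\[
(\mathcal{F}_1h)(z)=\int_0^z\frac{h(1)-h(u)}{1-u}\,du .
\]
My plan is to find a closed form for the iterates $h_m:=\mathcal{F}_1^m t^s$ and then set $z=1$.

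First I change variables $u=1-v$, so that $(\mathcal{F}_1h)(z)=\int_{1-z}^1\frac{h(1)-h(1-v)}{v}\,dv$. Writing $g(v)=h(1)-h(1-v)$, the operator is $g\mapsto\int_{1-z}^1 g(v)\,dv/v$. I conjecture that
\[
h_m(z)=\frac{s}{m!}\int_0^z(-\log(1-u))^m(1-u)^{s-1}\,du,
\]
and will verify it by induction. For $m=0$ this gives $h_0(z)=s\int_0^z(1-u)^{s-1}du=1-(1-z)^s$. This is not $t^s$, but it agrees with $t^s$ in the quantity that matters: for $h=t^s$ one has $h(1)-h(u)=1-u^s$. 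The integrand of $\mathcal{F}_1$ only sees $h(1)-h(u)$, so I should instead track $g_m(u):=h_m(1)-h_m(u)$.

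The cleaner formulation is as follows. Put $h_{m+1}(z)=\int_0^z\frac{h_m(1)-h_m(u)}{1-u}\,du$, so $h_{m+1}(1)-h_{m+1}(u)=\int_u^1\frac{h_m(1)-h_m(w)}{1-w}\,dw$. Define $\psi_m(u)=h_m(1)-h_m(u)$ with $\psi_0(u)=1-u^s$. The recursion is then $\psi_{m+1}(u)=\int_u^1\frac{\psi_m(w)}{1-w}\,dw$, and the target value is $H_s^\star(\{1\}^m)=h_m(1)=\int_0^1\frac{\psi_{m-1}(w)}{1-w}\,dw=\psi_m(0)$.

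Next I write $\psi_0(u)=\int_u^1 s\,t^{s-1}\,dt$ and iterate, using Fubini at each step. By induction,
\[
\psi_m(u)=\int_u^1 s\,t^{s-1}\,\frac{1}{m!}\Big(\int_u^t\frac{dw}{1-w}\Big)^m dt .
\]
The inductive step is the usual iterated-integral identity: swapping the $w$ and $t$ integrals in $\int_u^1\frac{dw}{1-w}\int_w^1 s\,t^{s-1}\frac{1}{m!}\big(\int_w^t\frac{dy}{1-y}\big)^m dt$ produces $\int_u^1 s\,t^{s-1}\frac{1}{(m+1)!}\big(\int_u^t\frac{dy}{1-y}\big)^{m+1}dt$. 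Setting $u=0$ gives $\int_0^t\frac{dy}{1-y}=-\log(1-t)$, so
\[
\psi_m(0)=\frac{s}{m!}\int_0^1 t^{s-1}(-\log(1-t))^m\,dt .
\]
The substitution $t\mapsto 1-t$ then yields exactly $\frac{s}{m!}\int_0^1(-\log t)^m(1-t)^{s-1}\,dt$.

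The main obstacle is justifying Fubini and convergence for complex $s$ with $0<\mathrm{Re}\,s<1$, where $t^{s-1}$ is singular at $0$ and $1/(1-w)$ is singular at $1$. To handle this I take absolute values: $|s\,t^{s-1}|=|s|\,t^{\sigma-1}$ is integrable, and $(-\log(1-t))^m$ is integrable against it near $t=1$. The whole integrand is therefore dominated by a positive version with $\sigma=\mathrm{Re}\,s$, and Tonelli applies to that positive version. An alternative I would keep in reserve is to prove the identity for real $s>1$ and extend it by analyticity, using Theorem \ref{noint} for the left side and holomorphy of the right side.
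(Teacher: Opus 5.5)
Your argument is correct and is essentially the paper's: both reduce to $(\mathcal{F}_1^m t^s)(1)$ via Lemma \ref{op} and then collapse the $m$-fold iterated integral against $du/(1-u)$. The paper does this by repeated integration by parts with $d\,\mathrm{log}^j\frac{1}{1-u}$, and you do it by Fubini on $\psi_m$; your domination remark has the small advantage of actually justifying the interchange, whereas the paper leaves the vanishing of the boundary terms implicit.

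Do delete the exploratory second paragraph, though. The conjectured closed form for $h_m(z)$ is false: at $z=1$ it equals $s^{-m}$, and for $m=0$ it gives $h_0(1)-h_0(u)=(1-u)^s\neq 1-u^s$. So the claim that it ``agrees with $t^s$ in the quantity that matters'' is wrong, although nothing later in your proof depends on it.
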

 \noindent{\bf Proof:} By Lemma \ref{op}, we have 
    \[
 H_s^\star(\{1\}^m)=\mathcal{L}_{ \{1\}^ m}(t^s)=(\mathcal{F}_1^mt^s)(1).
 \]
 By definition and repeated using of integration by parts,
 \[
 \begin{split}
 &\;\;\;\;   (\mathcal{F}_1^mt^s)(1)\\
 & = \int^1_0 \frac{(\mathcal{F}_1^{m-1}t^s)(1)- (\mathcal{F}_1^{m-1}t^s)(u)  }{1-u}du   \\
 &=  \int^1_0   [  (\mathcal{F}_1^{m-1}t^s)(1)- (\mathcal{F}_1^{m-1}t^s)(u)     ]   d\,\mathrm{log}\frac{1}{1-u}        \\
 &=\int^1_0  \mathrm{log}\frac{1}{1-u}  d[ (\mathcal{F}_1^{m-1}t^s)(u)]       \\
 &= \int^1_0   \mathrm{log}\frac{1}{1-u} \cdot  \frac{(\mathcal{F}_1^{m-2}t^s)(1)- (\mathcal{F}_1^{m-2}t^s)(u)  }{1-u}du\\
 &= \frac{1}{2!} \int^1_0   [  (\mathcal{F}_1^{m-2}t^s)(1)- (\mathcal{F}_1^{m-2}t^s)(u)     ]   d\,\mathrm{log}^2\frac{1}{1-u}                   \\
 &=\qquad \cdots \\
 &=\frac{1}{(m-1)!} \int^1_0 \left[(\mathcal{F}_1t^s)(1)-(\mathcal{F}_1t^s)(u)\right]   d\,\mathrm{log}^{m-1}\frac{1}{1-u}                   \\
 &= \frac{1}{(m-1)!} \int^1_0   \mathrm{log}^{m-1}\frac{1}{1-u}       d\,[(\mathcal{F}_1t^s)(u)]             \\
 &=\frac{1}{(m-1)!} \int^1_0 \mathrm{log}^{m-1}\frac{1}{1-u}   \cdot \frac{ 1-u^s }{1-u}du \\
 &=\frac{1}{m!} \int^1_0    (1-u^s)d\, \mathrm{log}^{m}\frac{1}{1-u}   \\
 &= \frac{s}{m!}\int^1_0 \left(  -\mathrm{log}\,(1-u) \right)^m u^{s-1}du.\\
         \\ \end{split}
 \]
 By changing of variables $u=1-t$, we have 
 \[
H_s^\star(\{1\}^m)=\frac{s}{m!}\int^1_0\left( -\mathrm{log}\;t \right)^m(1-t)^{s-1}dt.
\]
 $\hfill\Box$\\
 
    \begin{lem}\label{mont}
    For a function $h$, define 
    \[
    Q_h(z)=\frac{h(1)-h(z)}{1-z},\qquad 0<z<1.
    \]
    If $Q_h$ is positive and  strictly increasing on $(0,1)$, then for $k\in \mathbb{Z}^+$, one has
    \[
    \frac{(\mathcal{F}_kh)(z)}{z}<(\mathcal{F}_kh)(1), \qquad 0<z<1.
    \]
    Moreover, $Q_{\mathcal{F}_kh}$ is positive and strictly increasing on $(0,1)$.
     \end{lem}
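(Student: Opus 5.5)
The plan is to write both quantities in the statement as averages of $Q_h$ and then use only that $Q_h$ is positive and strictly increasing. By the definition of $\mathcal{F}_k$,
\[
(\mathcal{F}_kh)(z)=\int_{(0,z)\times(0,1)^{k-1}} Q_h(u_1u_2\cdots u_k)\,du_1\cdots du_k .
\]
I introduce the auxiliary function
\[
g(u_1)=\int_{(0,1)^{k-1}}Q_h(u_1u_2\cdots u_k)\,du_2\cdots du_k ,\qquad 0<u_1<1.
\]
Then $(\mathcal{F}_kh)(z)=\int_0^z g(u_1)\,du_1$. Since $Q_h>0$ and $Q_h$ is strictly increasing, $g$ is positive and strictly increasing on $(0,1)$. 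Indeed, for $u_1<u_1'$ and all $u_2,\dots,u_k\in(0,1)$ we have $u_1u_2\cdots u_k<u_1'u_2\cdots u_k$, and the strict pointwise inequality persists after integration. I tacitly assume $(\mathcal{F}_kh)(1)<\infty$, that is, $g$ is integrable on $(0,1)$. In the case of interest, $h=t^s$ with $s>1$, this holds because $Q_h$ is bounded by $s$.

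\emph{First inequality.} Substitute $u_1=zv$ with $v\in(0,1)$. This gives
\[
\frac{(\mathcal{F}_kh)(z)}{z}=\int_0^1 g(zv)\,dv .
\]
For $0<z<1$ we have $zv<v$, so $g(zv)<g(v)$ for every $v\in(0,1)$. Integrating over $v$ yields $\frac{(\mathcal{F}_kh)(z)}{z}<\int_0^1 g(v)\,dv=(\mathcal{F}_kh)(1)$, with strict inequality because the integrands differ strictly on a set of positive measure.

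\emph{Second claim.} Put $F=\mathcal{F}_kh$. Then $F(1)-F(z)=\int_z^1 g(u)\,du$. Substituting $u=z+\tau(1-z)$ with $\tau\in(0,1)$ gives
\[
Q_F(z)=\frac{F(1)-F(z)}{1-z}=\int_0^1 g\bigl(z+\tau(1-z)\bigr)\,d\tau .
\]
This is positive because $g>0$. For fixed $\tau\in(0,1)$, the map $z\mapsto z+\tau(1-z)=\tau+(1-\tau)z$ is strictly increasing in $z$. Composing with the strictly increasing $g$ and integrating over $\tau$ shows that $Q_F$ is strictly increasing on $(0,1)$.

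The only delicate point is the integrability and finiteness assumption, together with justifying strictness after integration. I expect no real obstacle here, since strict inequalities between integrands holding everywhere on a set of positive measure give strict inequalities of the integrals.
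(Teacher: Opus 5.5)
Your proof is correct and follows essentially the paper's route. The first inequality is argued exactly as in the paper, by comparing $Q_h(zu_1\cdots u_k)$ with $Q_h(u_1\cdots u_k)$ pointwise. For the second claim, the paper computes $(\mathcal{F}_kh)'=g$, deduces strict convexity, and applies the chord-slope inequality; your identity $Q_F(z)=\int_0^1 g(z+\tau(1-z))\,d\tau$ simply packages that argument directly.

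The finiteness of $(\mathcal{F}_kh)(1)$ that you flag is also assumed tacitly in the paper. Your averaging formula gives $\sup Q_{\mathcal{F}_kh}\le\sup Q_h$, so boundedness propagates through the iterated operators $\mathcal{F}_{k_{r-1}}\cdots\mathcal{F}_{k_1}t^s$ where the lemma is later applied.
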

     \noindent{\bf Proof:}
     By changing of variables, we have 
     \[
     \frac{  ( \mathcal{F}_kh   )(z)  }{z}=\int_{(0,1)^k} Q_h(zu_1\cdots u_k) du_1\cdots du_k
     \]
     and 
     \[
  ( \mathcal{F}_kh   )(1)  =\int_{(0,1)^k} Q_h(u_1\cdots u_k) du_1\cdots du_k. 
     \]
     For $z,u_1,\cdots,u_k\in (0,1)$, one has $$zu_1\cdots u_k<u_1\cdots u_k.$$
     From the fact that $Q_h$ is strictly increasing, we have 
     \[
     Q_h(zu_1\cdots u_k)<Q_h(u_1\cdots u_k).
     \]
     Thus 
      \[
    \frac{(\mathcal{F}_kh)(z)}{z}<(\mathcal{F}_kh)(1), \qquad 0<z<1.
    \]
    
    Denote by $\xi=\mathcal{F}_kh$. If $k=1$, then 
    \[
    \xi^\prime(z)=Q_h(z).
    \]
    Thus $\xi^\prime$ is strictly increasing. If $k\geq 2$, by definition 
    \[
      \xi^\prime(z)=\int^1_0Q_h(zv_1\cdots v_{k-1}   )dv_1\cdots dv_{k-1} .   \]
      As $Q_h$ is strictly increasing, $\xi^\prime$ is also strictly increasing. In both cases, the function $\xi$ is a strictly convex function and $Q_\xi(z)>0$ for $z\in (0,1)$. Then for $1>z>z^\prime>0$, 
      \[\begin{split}
      &\;\;\;\; Q_{\xi}(z)-Q_{\xi}(z^\prime)\\
      &=\frac{ \xi(1)-\xi(z)   }{1-z}-\frac{\xi(1)-\xi(z^\prime) }{1-z^\prime}                        \\
      &=    \frac{1}{(1-z)(1-z^\prime)}  \left[ (z-z^\prime) \xi(1) +(1-z)\xi(z^\prime)-(1-z^\prime) \xi(z)    \right]       \\
      &=  \frac{1}{1-z}  \left[ \frac{z-z^\prime}{1-z^\prime} \xi(1) +\frac{1-z}{1-z^\prime}\xi(z^\prime)- \xi(z)    \right]   >0  .  \\      \end{split}
      \]   
     In conclusion,  $Q_{\mathcal{F}_kh}$ is positive and strictly increasing on $(0,1)$.      $\hfill\Box$\\

 \begin{lem}\label{rea1}
     For $s\in \mathbb{R}$, $s>1$ and $m\geq 1$, one has \\
     $(i)$
     \[
     \begin{split}
     & H_s^\star(\{1\}^m)        < s.         \\
     \end{split}
     \]
     $(ii)$ 
     \[
     H_s^\star(k_1,\cdots,k_{r-1}, k_r+1,\{1\}^m)<H_s^\star(k_1,\cdots,k_{r-1}, k_r).     \]
     $(iii)$ For $i_1,\cdots,i_r\geq 0$ and $(i_1,\cdots,i_r)\neq (0,\cdots,0)$, 
     \[
     H_s^\star(k_1+i_1,\cdots,k_{r-1}+i_{r-1}, k_r+i_r)<H_s^\star(k_1,\cdots,k_{r-1}, k_r).  
          \]
     \end{lem}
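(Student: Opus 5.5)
The plan is to work throughout with the operator representation of Lemma \ref{op}, $H_s^\star(k_1,\cdots,k_r)=(\mathcal{F}_{k_r}\cdots\mathcal{F}_{k_1}t^s)(1)$, together with the function $Q_h$ of Lemma \ref{mont}. For $s>1$ the function $Q_{t^s}(z)=\frac{1-z^s}{1-z}$ is positive and strictly increasing on $(0,1)$, because $t^s$ is strictly convex. By Lemma \ref{mont} and induction, every intermediate function $g=\mathcal{F}_{k_j}\cdots\mathcal{F}_{k_1}t^s$ then has $Q_g$ positive and strictly increasing.

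Two elementary facts about $\mathcal{F}_k$ will be used repeatedly.

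\emph{Derivative formulas.} Substituting the product of the variables gives $(\mathcal{F}_kh)'(z)=\int_{(0,1)^{k-1}}Q_h(zv_1\cdots v_{k-1})\,dv$. In particular $(\mathcal{F}_{k+1}h)'(z)=(\mathcal{F}_kh)(z)/z$. Also $Q_{\mathcal{F}_kh}(z)$ is the average of $(\mathcal{F}_kh)'$ over $(z,1)$, since $(\mathcal{F}_kh)(1)-(\mathcal{F}_kh)(z)=\int_z^1(\mathcal{F}_kh)'$.

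\emph{Order preservation.} Both quantities are linear in $h$. Hence if $Q_h\le Q_{h'}$ pointwise, with strict inequality on a set of positive measure, then $(\mathcal{F}_kh)'<(\mathcal{F}_kh')'$. Consequently $Q_{\mathcal{F}_kh}<Q_{\mathcal{F}_kh'}$ on $(0,1)$ and $(\mathcal{F}_kh)(1)<(\mathcal{F}_kh')(1)$. A comparison function is the linear map $\ell_c(z)=cz$: it has $Q_{\ell_c}\equiv c$, and $\mathcal{F}_1\ell_c=\ell_c$.

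For $(i)$, one could simply use Lemma \ref{e1}: since $(1-t)^{s-1}<1$ on $(0,1)$ for $s>1$, we get $H_s^\star(\{1\}^m)<\frac{s}{m!}\int_0^1(-\log t)^m\,dt=s$. Alternatively, $Q_{t^s}(z)<s$ (mean value theorem, $s>1$), and order preservation iterated $m$ times against $\ell_s$ gives $(\mathcal{F}_1^mt^s)(1)<(\mathcal{F}_1^m\ell_s)(1)=s$.

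For $(ii)$, let $g=\mathcal{F}_{k_{r-1}}\cdots\mathcal{F}_{k_1}t^s$, put $c=(\mathcal{F}_{k_r}g)(1)=H_s^\star(k_1,\cdots,k_r)$, and let $\psi=\mathcal{F}_{k_r+1}g$.
\begin{itemize}
\item By the derivative formula, $\psi'(z)=(\mathcal{F}_{k_r}g)(z)/z$.
\item By Lemma \ref{mont}, $(\mathcal{F}_{k_r}g)(z)/z<c$, so $\psi'(z)<c$ on $(0,1)$.
\item Averaging over $(z,1)$ gives $Q_\psi<c=Q_{\ell_c}$.
\item Applying order preservation $m$ times yields $H_s^\star(k_1,\cdots,k_r+1,\{1\}^m)=(\mathcal{F}_1^m\psi)(1)<(\mathcal{F}_1^m\ell_c)(1)=c$.
\end{itemize}

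For $(iii)$, it suffices to increase a single index $k_j$ by one and then chain such steps. With $g$ the function before the $j$-th step, compare $(\mathcal{F}_{k_j+1}g)'(z)=\int_{(0,1)^{k_j}}Q_g(zv_1\cdots v_{k_j})\,dv$ with $(\mathcal{F}_{k_j}g)'(z)=\int_{(0,1)^{k_j-1}}Q_g(zv_1\cdots v_{k_j-1})\,dv$. The extra factor $v_{k_j}<1$ shrinks the argument, and $Q_g$ is strictly increasing, so the first is strictly smaller pointwise. Averaging gives $Q_{\mathcal{F}_{k_j+1}g}<Q_{\mathcal{F}_{k_j}g}$, and the remaining operators $\mathcal{F}_{k_{j+1}},\cdots,\mathcal{F}_{k_r}$ preserve this strict inequality down to the value at $1$.

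The main obstacle I expect is justifying these manipulations rigorously: differentiation under the integral, the boundary behaviour of $Q_h$ near $z=0$ and $z=1$ (finiteness and integrability), and the strictness of inequalities after averaging. I would handle these with the monotonicity and positivity already established in Lemma \ref{mont}, and by checking that all functions involved are continuous on $[0,1]$ and $C^1$ on $(0,1)$.
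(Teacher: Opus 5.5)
Your proof is correct. Part $(i)$, in your first version, is exactly the paper's argument via Lemma \ref{e1}. The differences are in $(ii)$ and $(iii)$.

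\textbf{Part (ii).} You share the paper's key step: $\psi'(z)=(\mathcal{F}_{k_r}g)(z)/z<(\mathcal{F}_{k_r}g)(1)$, from Lemma \ref{mont}. The paper then passes this through $\mathcal{F}_1^m$ with an explicit kernel. Repeated integration by parts gives $(\mathcal{F}_1^m\varphi)(1)=\frac{1}{m!}\int_0^1\varphi'(u)\log^m\frac{1}{1-u}\,du$, and $\frac{1}{m!}\log^m\frac{1}{1-u}$ is a probability density on $(0,1)$. Your comparison with the fixed point $\ell_c$ of $\mathcal{F}_1$ reaches the same inequality without integration by parts or boundary terms. However, the paper reuses its explicit formula in Lemma \ref{rlim} to show that $H_s^\star(k_1,\cdots,k_r+1,\{1\}^m)\to H_s^\star(k_1,\cdots,k_r)$ as $m\to\infty$, and a pure comparison argument does not give that.

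\textbf{Part (iii).} The paper does not use the $\mathcal{F}_k$ calculus here. It writes the sum with the raised index as a cube integral containing an extra variable $y\in(0,1)$ that multiplies the later $X$'s; it writes out the case $j=1$, where $y$ multiplies all of $X_1,\cdots,X_r$. By the Hermite--Genocchi formula the integrand equals $\frac{\Gamma(s+r)}{\Gamma(s)}\int_{S_r}(u_0+u_1X_1+\cdots+u_rX_r)^{s-1}du_1\cdots du_r$. For $s>1$ this is strictly increasing in every $X_j$. So one pointwise comparison of integrands suffices, and nothing has to be propagated through the later operators. Your route keeps everything inside one principle: strict $Q$-monotonicity is preserved by each $\mathcal{F}_k$. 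That unifies $(ii)$ and $(iii)$, at the cost of the propagation step.

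\textbf{A correction.} Your order-preservation principle is overstated. If $Q_h<Q_{h'}$ holds only on a set of positive measure, the pointwise conclusion $(\mathcal{F}_kh)'<(\mathcal{F}_kh')'$ fails; for $k=1$ one has $(\mathcal{F}_1h)'=Q_h$, so no strictness is gained off that set. In every application you actually have $Q_h<Q_{h'}$ at every point of $(0,1)$. Under that hypothesis, $(\mathcal{F}_kh)'<(\mathcal{F}_kh')'$, $Q_{\mathcal{F}_kh}<Q_{\mathcal{F}_kh'}$ and $(\mathcal{F}_kh)(1)<(\mathcal{F}_kh')(1)$ all follow directly, so state the principle that way.
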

     \noindent{\bf Proof:} $(i)$ By Lemma \ref{e1}, for $s>1$, one has
     \[
     0<(1-t)^{s-1}<1,\qquad t\in (0,1).
     \]
     So we have 
     \[
     H_s^\star(\{1\}^m)<\frac{s}{m!}\int^1_0\left( -\mathrm{log}\;t \right)^mdt.
          \]
          As 
          \[
          \int^1_0\left( -\mathrm{log}\;t \right)^mdt=\int^{+\infty}_0 u^m e^{-u}du=\Gamma(m+1)=m!,
          \]     
          one has $$H_s^\star(\{1\}^m)< s.$$
          $(ii)$ For fixed $s>1$, define $h(s)=t^s, t\in (0,1)$. 
          Since 
          \[
          Q_h(z)=\frac{ 1-z^s }{1-z}=s\int^1_0 \left(z+(1-z)u     \right)^{s-1} du,
          \]
          $Q_h(z)$ is positive and  strictly increasing on $0<z<1$.
          Define 
          \[
          g=\mathcal{F}_{k_{r-1}}\cdots \mathcal{F}_{k_1} t^s.
          \]
          For $r=1$, $g$ is interpreted as $g=t^s$. By Lemma \ref{mont}, 
          \[
          Q_g(z)=\frac{g(1)-g(z)   }{1-z}
          \]
          is positive and  strictly increasing on $(0,1)$.
          
          Define $F=\mathcal{F}_{k_r} g$. By Lemma \ref{op}, we have 
          \[
          F(1)=\left(\mathcal{F}_{k_r} g \right)(1)=H_s^\star(k_1,\cdots,k_{r-1},k_r) \tag{12}
          \]
          and 
          \[
          H_s^\star(k_1,\cdots,k_{r-1}, k_r+1,\{1\}^m)= \left(\mathcal{F}_1^m \mathcal{F}_{k_r+1}g    \right)(1).   \tag{13}
          \]
          
          For any differentiable function $\varphi$, by the essentially same analysis as Lemma \ref{e1}, we have 
          \[
          \begin{split}
          &\;\;\;\;   \left(  \mathcal{F}_1^m \varphi     \right)(1)            \\
          &=      \int^1_0  \frac{  \left(  \mathcal{F}_1^{m-1} \varphi     \right)(1)- \left(  \mathcal{F}_1^{m-1} \varphi     \right)(u) }{1-u}   du          \\
          &=   \int^1_0 \left[ \left(  \mathcal{F}_1^{m-1} \varphi     \right)(1)- \left(  \mathcal{F}_1^{m-1} \varphi     \right)(u)\right]  d\,\mathrm{log}\frac{1}{1-u} \\
          &= \int^1_0  \mathrm{log}\frac{1}{1-u} d\left[   \left(  \mathcal{F}_1^{m-1} \varphi     \right)(u)  \right]      \\
          &\cdots              \\   
     &= \frac{1}{(m-1)!} \int^1_0   \mathrm{log}^{m-1}\frac{1}{1-u}       d\,[(\mathcal{F}_1\varphi)(u)]   \\
     &=   \frac{1}{(m-1)!} \int^1_0   \mathrm{log}^{m-1}\frac{1}{1-u}  \cdot  \frac{1-\varphi(u)}{1-u}     du     \\  
     &=\frac{1}{m!}\int^1_0 \varphi^\prime(u)\,   \mathrm{log}^{m}\frac{1}{1-u} du.          \\
         \end{split}
          \]
          Let $\varphi= \mathcal{F}_{k_r+1}g$. By definition, it follows that
          \[
          \varphi^\prime(u)= \int_{(0,1)^{k_r}} \frac{1-g(uv_1\cdots v_{k_1})    }{1-uv_1\cdots v_{k_1}} dv_1\cdots dv_{k_1}=\frac{\left(\mathcal{F}_{k_r}g\right)(u) }{u}=\frac{F(u)}{u}.
          \]
          So we have 
          \[
           H_s^\star(k_1,\cdots,k_{r-1}, k_r+1,\{1\}^m)= \left(\mathcal{F}_1^m \mathcal{F}_{k_r+1}g    \right)(1)=\frac{1}{m!}\int^1_0\frac{F(u)}{u}   \mathrm{log}^{m}\frac{1}{1-u} du.                         \]
          By Lemma \ref{mont}, one has 
          \[
          \frac{F(u)}{u}<F(1),\qquad 0<u<1.
          \]
          Therefore, 
          \[
          H_s^\star(k_1,\cdots,k_{r-1}, k_r+1,\{1\}^m) <\frac{F(1)}{m!}   \int^1_0  \mathrm{log}^{m}\frac{1}{1-u} du.                   \]  
          Since 
          \[
          \int^1_0  \mathrm{log}^{m}\frac{1}{1-u} du= \int^1_0  \mathrm{log}^{m}\frac{1}{u} du=\int^{+\infty}_0 x^m e^{-x}dx=m!                \]   
          and $F(1)=H_s^\star(k_1,\cdots,k_{r-1},k_r)$, we have 
          $$H_s^\star(k_1,\cdots,k_{r-1}, k_r+1,\{1\}^m)<   H_s^\star(k_1,\cdots,k_{r-1},k_r).$$
          $(iii)$ It suffices to prove that the statement in special  cases 
          \[
          (i_1,\cdots,i_r)=(1,0,\cdots,0), (0,1,\cdots,0),\cdots,(0,0,\cdots,1).
          \]
          The general cases follows from the special cases by induction.
          For \[ (i_1,\cdots,i_r)=(1,0,\cdots,0),\] we give the detailed proof for this case.  The  other cases are analogous.   By Theorem \ref{noint},  we have 
          \[
  H_s^\star(k_1,k_2, \cdots,k_r)=\int_{(0,1)^{k_1+\cdots+k_r}} [1,X_1,\cdots, X_r]t^{s+r-1}dx_1\cdots dx_{k_1+\cdots+k_r},
  \]    
    \[
  H_s^\star(k_1+1,k_2, \cdots,k_r)=\int^1_0\left(\int_{(0,1)^{k_1+\cdots+k_r}} [1,yX_1,\cdots, yX_r]t^{s+r-1}dx_1\cdots dx_{k_1+\cdots+k_r}\right)dy.
  \]  
  Here   
     \[
     X_j=x_1\cdots x_{K_j},\qquad 1\leq j\leq r.
     \]
     By the Hermite-Genocchi formula,
     \[
     [1,X_1,\cdots,X_r]t^{s+r-1}=\frac{\Gamma(s+r)}{\Gamma(s)}  \int_{S_r} \left(u_0+u_1X_1+\cdots+u_rX_r      \right)^{s-1} du_1\cdots du_r,
     \]
     \[
      [1,yX_1,\cdots,yX_r]t^{s+r-1}=\frac{\Gamma(s+r)}{\Gamma(s)}  \int_{S_r} \left(u_0+u_1yX_1+\cdots+u_ryX_r      \right)^{s-1} du_1\cdots du_r .    \]
      From the above expressions, for $s>1$, it is clear that
      \[
       [1,X_1,\cdots,X_r]t^{s+r-1}>   [1,yX_1,\cdots,yX_r]t^{s+r-1},\qquad y\in (0,1).    \]
       So 
       \[
         [1,X_1,\cdots,X_r]t^{s+r-1}>  \int^1_0 [1,yX_1,\cdots,yX_r]t^{s+r-1} dy.      \]
         By integrating with respect to $x_1,x_2,\cdots,x_{k_1+\cdots+k_r}$ over $(0,1)^{k_1+\cdots+k_r}$, one has 
        \[
         H_s^\star(k_1,k_2, \cdots,k_r)>  H_s^\star(k_1+1,k_2, \cdots,k_r).       \]
     $\hfill\Box$\\
     
\begin{lem}\label{rlim}
     For $s>1$, one has \\
     $(i)$
     \[
         \mathop{\mathrm{lim}}_{m\rightarrow +\infty} H_s^\star(\{1\}^m)= s;         \]
     $(ii)$ 
     \[
     \mathop{\mathrm{lim}}_{m\rightarrow +\infty} H_s^\star(k_1,\cdots,k_{r-1}, k_r+1,\{1\}^m)=H_s^\star(k_1,\cdots,k_{r-1}, k_r).     \]
\end{lem}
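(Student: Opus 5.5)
The plan is to use the one-variable integral representations already obtained in Lemma \ref{e1} and in the proof of Lemma \ref{rea1}$(ii)$. After the substitution $t=e^{-x}$ (resp. $u=1-e^{-x}$) each quantity becomes an average of a bounded function against the Gamma density $x^m e^{-x}/m!$. This density has total mass $1$ and, as $m\to+\infty$, puts asymptotically all of its mass on $x\to+\infty$. The limit is then the value of the function at infinity.

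\emph{Part $(i)$.} By Lemma \ref{e1} with $t=e^{-x}$,
\[
H_s^\star(\{1\}^m)=s\int_0^{+\infty}\frac{x^me^{-x}}{m!}\,(1-e^{-x})^{s-1}\,dx .
\]
Since $\int_0^{+\infty}x^me^{-x}/m!\,dx=1$, I will write $s-H_s^\star(\{1\}^m)=s\int_0^{+\infty}\frac{x^me^{-x}}{m!}\bigl(1-(1-e^{-x})^{s-1}\bigr)dx$. For $a=s-1>0$ and $y\in(0,1)$ the elementary bound $0\le 1-(1-y)^a\le \max\{1,a\}\,y$ holds: for $a\le1$ use $(1-y)^a\ge 1-y$, and for $a\ge1$ use the mean value theorem. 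Taking $y=e^{-x}$, the difference is at most
\[
s\max\{1,s-1\}\int_0^{+\infty}\frac{x^me^{-2x}}{m!}\,dx=s\max\{1,s-1\}\,2^{-(m+1)}\to0 .
\]

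\emph{Part $(ii)$.} Keep the notation of the proof of Lemma \ref{rea1}$(ii)$: $g=\mathcal{F}_{k_{r-1}}\cdots\mathcal{F}_{k_1}t^s$, $F=\mathcal{F}_{k_r}g$, $F(1)=H_s^\star(k_1,\cdots,k_r)$. That proof shows
\[
H_s^\star(k_1,\cdots,k_{r-1},k_r+1,\{1\}^m)=\frac1{m!}\int_0^1\frac{F(u)}{u}\log^m\frac1{1-u}\,du=\int_0^{+\infty}\frac{x^me^{-x}}{m!}\,\psi(x)\,dx,
\]
where $\psi(x)=F(u)/u$ with $u=1-e^{-x}$. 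The facts I need about $\psi$ come from Lemma \ref{mont} and its proof:
\[
\frac{F(u)}{u}=\int_{(0,1)^{k_r}}Q_g(uv_1\cdots v_{k_r})\,dv_1\cdots dv_{k_r},
\]
with $Q_g$ positive and increasing. Hence $0<F(u)/u<F(1)$. Moreover, by monotone convergence, $F(u)/u\uparrow \int_{(0,1)^{k_r}} Q_g(v_1\cdots v_{k_r})\,dv=F(1)$ as $u\uparrow1$. So $\psi$ is bounded by $F(1)$ and $\psi(x)\to F(1)$ as $x\to+\infty$.

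Now fix $\varepsilon>0$ and choose $X$ with $|\psi(x)-F(1)|<\varepsilon$ for $x>X$. Splitting the integral at $X$ gives
\[
\bigl|H_s^\star(\cdots,k_r+1,\{1\}^m)-F(1)\bigr|\le \varepsilon+2F(1)\int_0^X\frac{x^me^{-x}}{m!}dx\le\varepsilon+2F(1)\frac{X^{m+1}}{(m+1)!},
\]
and the last term tends to $0$ as $m\to\infty$.

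The only delicate step is the convergence $F(u)/u\to F(1)$ as $u\to1^-$. The key choice is to handle it through the monotone representation via $Q_g$ from Lemma \ref{mont}, which gives the convergence directly, rather than trying to establish continuity of $F$ at $1$ separately. Part $(i)$ can alternatively be obtained the same way with $\psi(x)=s(1-e^{-x})^{s-1}$.
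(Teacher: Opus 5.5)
Your proof is correct. Part $(i)$ is the paper's argument up to the substitution $t=e^{-x}$. Both bound $1-(1-t)^{s-1}$ by a constant times $t$ and pick up the factor $2^{-(m+1)}$; you also make the constant explicit.

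In part $(ii)$ you start from the same representation as the paper, $\frac1{m!}\int_0^1\frac{F(u)}{u}\log^m\frac1{1-u}\,du$. Both arguments then split into a region near $u=1$, where the integrand is close to $F(1)$, and a region where the weight is negligible. The input near $u=1$ is different.

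The paper asserts, ``by induction'' and without details, that $F'$ is bounded on $(0,1)$. It deduces a Lipschitz estimate $\bigl|\frac{F(1-u)}{1-u}-F(1)\bigr|\le c_2u$ for $u<\frac12$, which gives an explicit rate $O(2^{-m}+1/m!)$.

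You use only what Lemma \ref{mont} already proves: the representation $F(u)/u=\int_{(0,1)^{k_r}}Q_g(uv_1\cdots v_{k_r})\,dv$ with $Q_g$ positive and increasing. Monotone convergence then gives $\psi(x)\to F(1)$, and you finish with an $\varepsilon$--$X$ split against the Gamma density. This loses the rate but avoids the unproved regularity claim about $F'$, so your argument is the more self-contained one. Nothing is lost, since the lemma is used only qualitatively, in the surjectivity proof for $\mathfrak{h}_s$.

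One small point deserves a clause. The pointwise convergence $Q_g(uw)\to Q_g(w)$ as $u\uparrow1$, needed for monotone convergence, holds because $Q_g$ is continuous on $(0,1)$, as $g$ is continuous. Alternatively, an increasing function has at most countably many jumps, so the convergence holds almost everywhere.
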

\noindent{\bf Proof:}
     By Lemma \ref{e1},
     \[
H_s^\star(\{1\}^m)=\frac{s}{m!}\int^1_0\left( -\mathrm{log}\;t \right)^m(1-t)^{s-1}dt.
\]
For $s>1$,     it is clear that 
\[
\Big{|} \frac{ (1-t)^{s-1}-1    }{t}    \Big{|}\leq c_1, \quad t\in (0,1)
\] 
for some constant $c_1>0$.
Thus 
\[
\begin{split}
&\;\;\;\;  \Big{|}  H_s^\star(\{1\}^m)-s      \Big{|}     \\
&=  \Big{|}  \frac{s}{m!}\int^1_0\left( -\mathrm{log}\;t \right)^m(1-t)^{s-1}dt- \frac{s}{m!}\int^1_0\left( -\mathrm{log}\;t \right)^mdt  \Big{|}         \\
& \leq    \frac{s}{m!}\int^1_0  \left( -\mathrm{log}\;t \right)^m  \Big{|}  (1-t)^{s-1}-1       \Big{|}       dt \\
&    \leq    \frac{c_1 s}{m!}\int^1_0  \left( -\mathrm{log}\;t \right)^m t\, dt  .      \\
\end{split}
\]
By changing of variables $t=e^{-u}=e^{-\frac{v}{2}}$, we have 
\[
\int^1_0  \left( -\mathrm{log}\;t \right)^m t\, dt = \int^{+\infty}_0 u^m e^{-2u} du= \frac{1}{2^{m+1}}\int^{+\infty}_0 v^m e^{-v} dv=\frac{m!}{2^{m+1}}.   \]
As a result,
\[
 \Big{|}  H_s^\star(\{1\}^m)-s      \Big{|}<\frac{c_1}{2^{m+1}}s.
 \]
 So we have
   \[
         \mathop{\mathrm{lim}}_{m\rightarrow +\infty} H_s^\star(\{1\}^m)= s.        \]
         $(ii)$ Here we use the same analysis as the proof of Lemma \ref{rea1}, $(ii)$.
         Define 
         \[
         F=\mathcal{F}_{k_r}\mathcal{F}_{k_{r-1}}\cdots \mathcal{F}_{k_1} t^s.
         \]
         Then 
         \[
    H_s^\star(k_1,\cdots,k_{r-1},k_r)=F(1),       \]
          \[
          \begin{split}
           &\;\;\;\;H_s^\star(k_1,\cdots,k_{r-1}, k_r+1,\{1\}^m)\\
           &=\frac{1}{m!}\int^1_0\frac{F(u)}{u}   \mathrm{log}^{m}\frac{1}{1-u} du\\
           &= \frac{1}{m!}\int^1_0\frac{F(1-u)}{1-u}   \mathrm{log}^{m}\frac{1}{u} du.  \\
           \end{split}                   \]
           It follows that
           \[
           \begin{split}
           & \;\;\;\;H_s^\star(k_1,\cdots,k_{r-1}, k_r+1,\{1\}^m)-  H_s^\star(k_1,\cdots,k_{r-1},k_r)        \\
           &= \frac{1}{m!}\int^1_0\frac{F(1-u)}{1-u}   \mathrm{log}^{m}\frac{1}{u} du-F(1)               \\
           &=\frac{1}{m!}\int^1_0\left( \frac{F(1-u)}{1-u}-F(1)\right)   \mathrm{log}^{m}\frac{1}{u} du.            \end{split}
           \]
           By the proof of Lemma \ref{mont}, one has 
           \[
           \frac{F(1-u)}{1-u}=\int_{(0,1)^{k_r}} Q_h\left((1-u)u_1\cdots u_{k_r}\right)du_1\cdots du_{k_r}      ,
           \]
           \[
           F(1)=\int_{(0,1)^{k_r}} Q_h(u_1\cdots u_{k_r})du_1\cdots du_{k_r}         \]
           for 
           \[
           h=\mathcal{F}_{k_{r-1}}\cdots \mathcal{F}_{k_1}t^s,\quad Q_h(z)=\frac{ h(1)-h(z)}{1-z},\quad 0<z<1.
           \]
           For $s>1$, by induction one can show that $F^\prime(u)$ is bounded on $(0,1)$.
           Moreover, for $s>1$, by repeated using of Lemma \ref{mont}, $Q_h(z)$ is positive and strictly increasing on $(0,1)$.
           For $u\in \left(\frac{1}{2},1\right)$, 
           \[
          \Bigg{|} \frac{F(1-u)}{1-u}-F(1)\Bigg{|} \leq      2F(1).     \]
          For $u\in \left(0,\frac{1}{2}\right)$, there is a constant $c_2>0$ such that
           \[
          \Bigg{|} \frac{F(1-u)}{1-u}-F(1)\Bigg{|} \leq    c_2u.     \]   
          So we have 
          \[
           \begin{split}
           & \;\;\;\;\Big{|}H_s^\star(k_1,\cdots,k_{r-1}, k_r+1,\{1\}^m)-  H_s^\star(k_1,\cdots,k_{r-1},k_r) \Big{|}       \\
           &= \Bigg{|}\frac{1}{m!}\int^1_0\frac{F(1-u)}{1-u}   \mathrm{log}^{m}\frac{1}{u} du-F(1)\Bigg{|}               \\
           &\leq \frac{1}{m!}\int^1_0\Bigg{|} \frac{F(1-u)}{1-u}-F(1)\Bigg{|}  \mathrm{log}^{m}\frac{1}{u} du\\
           &\leq \frac{c_2}{m!} \int^{\frac{1}{2}}_0 u\,  \mathrm{log}^{m}\frac{1}{u} du +\frac{ 2F(1)}{m!}\int^1_{\frac{1}{2}}   \mathrm{log}^{m}\frac{1}{u} du\\  
           &\leq \frac{c_2}{m!} \int^{1}_0 u\,  \mathrm{log}^{m}\frac{1}{u} du  +\frac{ 2F(1)}{m!} \\   
           &\leq \frac{c_2}{2^{m+1}} +\frac{2F(1)}{m!}.         \end{split}
           \]
           As a result, 
            \[
     \mathop{\mathrm{lim}}_{m\rightarrow +\infty} H_s^\star(k_1,\cdots,k_{r-1}, k_r+1,\{1\}^m)=H_s^\star(k_1,\cdots,k_{r-1}, k_r).     \]
          $\hfill\Box$\\     

\begin{Thm}\label{rord}
For $s>1$, there is a total order structure on the set of generalized multiple star  harmonic sums:
     If   $(k_1,\cdots,k_p)\succ (l_1,\cdots,l_q)  $, then
     \[   H_s^\star(k_1,\cdots,k_p)>H_s^\star(l_1,\cdots,l_q).   \]   
\end{Thm}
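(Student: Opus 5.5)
The proof follows the integer case, Proposition \ref{or}, with the integer-$n$ estimates replaced by the real-$s$ estimates of Lemma \ref{rea1}. There are two kinds of comparison in the definition of $\succ$, and I treat them separately.

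\emph{Step 1: extension.} I will show $H_s^\star(k_1,\cdots,k_r,k_{r+1})>H_s^\star(k_1,\cdots,k_r)$ for $s>1$. In the proof of Lemma \ref{dom} the difference was written as
\[
\int_{(0,1)^{K_{r+1}}}\frac{X_{r+1}}{1-X_{r+1}}\Big([1,X_1,\cdots,X_r]t^{s+r-1}-[X_1,\cdots,X_r,X_{r+1}]t^{s+r-1}\Big)\,dx_1\cdots dx_{K_{r+1}}.
\]
By Hermite--Genocchi, both divided differences equal $\frac{\Gamma(s+r)}{\Gamma(s)}\int_{S_r}(\cdot)^{s-1}$. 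The arguments differ only in the coefficient of $u_0$, which is $1$ in one and $X_{r+1}<1$ in the other. Since $s>1$, $t\mapsto t^{s-1}$ is strictly increasing, so the integrand is strictly positive almost everywhere, and the difference is $>0$. Iterating gives $H_s^\star(k_1,\cdots,k_p)\geq H_s^\star(k_1,\cdots,k_i)$ for $p\geq i$.

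\emph{Step 2: first disagreement.} Suppose $(k_1,\cdots,k_{i-1})=(l_1,\cdots,l_{i-1})$ and $k_i<l_i$. I first bound $H_s^\star(l_1,\cdots,l_q)$ from above. By Lemma \ref{rea1}(iii), lowering $l_{i+1},\cdots,l_q$ to $1$ can only increase the value, so
\[
H_s^\star(l_1,\cdots,l_q)\leq H_s^\star(l_1,\cdots,l_i,\{1\}^{q-i}).
\]
If $q=i$ this step is vacuous. Write $l_i=(l_i-1)+1$ and apply Lemma \ref{rea1}(ii) with $m=q-i$; for $m=0$ use Lemma \ref{rea1}(iii) instead. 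This gives
\[
H_s^\star(l_1,\cdots,l_i,\{1\}^{q-i})<H_s^\star(l_1,\cdots,l_{i-1},l_i-1).
\]
Next I compare with the $k$-string. Because $k_i\leq l_i-1$, Lemma \ref{rea1}(iii) gives
\[
H_s^\star(l_1,\cdots,l_{i-1},l_i-1)\leq H_s^\star(k_1,\cdots,k_i).
\]
By Step 1, $H_s^\star(k_1,\cdots,k_i)\leq H_s^\star(k_1,\cdots,k_p)$. Chaining these yields the strict inequality.

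\emph{Expected obstacle: the degenerate case.} The delicate point is $l_i-1=0$. This cannot occur, since $l_i>k_i\geq1$ forces $l_i\geq 2$, so $l_i-1\geq1$. A second degenerate case is $i=1$ with the prefix $(l_1-1)$, which is fine. The more serious point is that Step 1 needs a strict positivity argument that does not appear explicitly earlier in the paper. I would justify it via the Hermite--Genocchi integrand exactly as in the proof of Lemma \ref{rea1}(iii).
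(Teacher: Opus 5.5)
Your proof is correct and follows the paper's argument. Step 1 is exactly the paper's Hermite--Genocchi positivity argument, and Step 2 uses Lemma~\ref{rea1}(ii),(iii) the same way; you route the chain through $(l_1,\cdots,l_{i-1},l_i-1)$, as in Proposition~\ref{or}, whereas the paper routes it through $(k_1,\cdots,k_{i-1},k_i+1,\{1\}^{q-i})$, and treating $q=i$ separately is slightly more careful since Lemma~\ref{rea1}(ii) is stated only for $m\geq 1$.
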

\noindent{\bf Proof:}
By the proof of Lemma \ref{dom}, we have 
      \[
     \begin{split}
     &\;\;\;\;  H_s^\star(k_1,\cdots,k_p,k_{p+1})- H_s^\star(k_1,\cdots,k_p)        \\
     &= \int_{(0,1)^{K_{p+1}}}   \left(  \left[1,X_1,\cdots, X_p,X_{p+1}\right] t^{s+p} - [1,X_1,\cdots, X_p]t^{s+p-1}   \right)      dx_1\cdots dx_{K_{p+1}}        \\
       &=\int_{(0,1)^{K_{p+1}}}   \frac{X_{p+1}}{1-X_{p+1}}    \left([1,X_1,\cdots,X_p] t^{s+p-1}-[X_1,\cdots, X_p,X_{p+1}] t^{s+p-1}   \right)    dx_1\cdots dx_{K_{p+1}} .              \end{split}
     \]
  By the Hermite-Genocchi formula, for $s>1$ and $$0<X_{p+1}<X_p<\cdots<X_1<1,$$ one has 
  \[
  [1,X_1,\cdots,X_p] t^{s+p-1}-[X_1,\cdots, X_p,X_{p+1}] t^{s+p-1} >0.  \]   
  Thus $$   H_s^\star(k_1,\cdots,k_p,k_{p+1})>H_s^\star(k_1,\cdots,k_p).      $$
 If  $(k_1,\cdots,k_p)\succ (l_1,\cdots,l_q)  $, we assume that 
 \[
 (k_1,\cdots,k_{i-1})=(l_1,\cdots,l_{i-1}), \quad k_i<l_i
 \]
 for some $i\leq \mathrm{min}\{ p,q\}$.
 By induction, we have 
 \[
 H_s^\star(k_1,\cdots,k_p)\geq H_s^\star(k_1,\cdots,k_{i-1},k_i).
 \]
 From  Lemma \ref{rea1}, $(ii),(iii)$, it follows that
 \[
 H_s^\star(k_1,\cdots,k_{i-1},k_i)>H_s^\star(k_1,\cdots,k_{i-1}, k_i+1, \{1\}^{q-i})
 \]
 and 
 \[
 H_s^\star(k_1,\cdots,k_{i-1}, k_i+1, \{1\}^{q-i})\geq H_s^\star(l_1,\cdots,l_q).
  \]
  As a result, 
  \[
   H_s^\star(k_1,\cdots,k_p)> H_s^\star(l_1,\cdots,l_q). \]
  $\hfill\Box$\\
  
   Now we are ready to prove Theorem \ref{com}, $(ii)$.
 For $$(k_1,\cdots,k_r,\cdots)\in \widehat{\mathcal{T}}=(\mathbb{Z}^+)^\infty$$ and $s>1$, define 
 \[
 a_r=H_s^\star(k_1,\cdots,k_r).
 \]
 By Theorem \ref{rord}, we have $a_r<a_{r+1}$. By Lemma \ref{rea1}, 
 \[
 a_r\leq H_s^\star(\{1\}^r)<s.
 \]
 As a result, the sequence $\{a_r\,|\,r\geq 1\}$ is an increasing bounded sequence, thus the limit 
 \[
 \mathop{\mathrm{lim}}_{r\rightarrow +\infty} H_s^{\star}(k_1,\cdots,k_r)  \]
 exists and the map 
  \[\mathfrak{h}_s: \widehat{\mathcal{T}}\rightarrow (1, s],
 \]
 \[
  {\bf k}=(k_1,\cdots,k_r,\cdots)\mapsto \mathfrak{h}_s({\bf{k}})=\mathop{\mathrm{lim}}_{r\rightarrow +\infty} H_s^{\star}(k_1,\cdots,k_r).
                \]
                is well-defined. It suffices to show that $\mathfrak{h}_s$ is bijective.  
                
 For ${\bf k},{\bf l}\in \widehat{\mathcal{T}}$ and $ {\bf k}\neq {\bf l} $, without loss of generality, we assume that 
                \[
                (k_1,\cdots,k_i)=(l_1,\cdots,l_i), \quad k_{i+1}>l_{i+1}.
                \]
                By Lemma \ref{rea1}, for $r>i+1$, we have 
              \[ H_s^\star(k_1,\cdots, k_i, k_{i+1},\cdots k_r) <H_s^\star(k_1,\cdots, k_i, k_{i+1}-1) \leq H_s^\star(l_1,\cdots,l_i,l_{i+1}).  \]    
              Therefore
              \[
              \mathfrak{h}_s({\bf{k}})=\mathop{\mathrm{lim}}_{r\rightarrow +\infty} H_s^{\star}(k_1,\cdots,k_r)\leq    H_s^\star(l_1,\cdots,l_i,l_{i+1}).            \]  
              On the other hand 
              \[
              H_s^\star(l_1,\cdots,l_i,l_{i+1})<      \mathfrak{h}_s({\bf{l}})=\mathop{\mathrm{lim}}_{r\rightarrow +\infty} H_s^{\star}(l_1,\cdots,l_r)  .        \]   
            In conclusion, we have     
            \[
              \mathfrak{h}_s({\bf{k}})     <  \mathfrak{h}_s({\bf{l}}).      \]    
            So the map $\mathfrak{h}_s$ is injective.

To show that $\mathfrak{h}_s$ is surjective, for $r\geq 1$, define 
            \[
            Z_{\{1\}^r}(s)=\Big{(}H_s^\star(\{1\}^r), s\Big{]},
            \]
            \[
            Z_{k_1,\cdots,k_r}(s)=\Big{(}H_s^\star(k_1,\cdots,k_r), H_s^\star(k_1,\cdots ,k_{i-1},k_i-1)  \Big{]}
            \]
  for $(k_1,\cdots,k_r)\neq (\{1\}^r)$ and $$k_i\geq 2,k_{i+1}=\cdots=k_r=1.$$
  From the total order structure of multiple star harmonic sums, we have
    \[
  Z_{k_1,\cdots,k_r}(s)\bigcap Z_{l_1,\cdots,l_r}(s)=\varnothing,\quad (k_1,\cdots,k_r)\neq (l_1,\cdots,l_r),
  \]
  \[
  (1,s]=\bigcup_{(k_1,\cdots,k_r)\in (\mathbb{Z}^+)^r}  Z_{k_1,\cdots,k_r}(s), \quad r\geq 1, \]
  \[
  Z_{k_1,\cdots,k_r}(s)=\bigcup_{(k_{r+1},\cdots,k_{r+p})\in (\mathbb{Z}^+)^p} Z_{k_1,\cdots,k_r,k_{r+1},\cdots,k_{r+p}}(s)   . \]
  \begin{lem}\label{eqp}
  For $m\geq 0$, let $c>0, a_0,\cdots,a_m>0, \alpha>0$. Then 
  \[
  \int^1_0[a_0,\cdots,a_m,cx,cx]t^{\alpha+1} dx=[a_0,\cdots,a_m, c]t^\alpha.
  \]
  \end{lem}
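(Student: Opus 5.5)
The plan is to interpret the integrand as a derivative in the last (doubled) node, integrate it exactly, and then remove the resulting node at $0$ with the same trick as formula $(7)$.

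Fix $a_0,\cdots,a_m>0$ and put $f(t)=t^{\alpha+1}$ on $[0,+\infty)$. Since $\alpha>0$, $f$ is $C^1$ on $[0,+\infty)$ and smooth on $(0,+\infty)$. Define
\[
g(y)=[a_0,\cdots,a_m,y]f,\qquad y\geq 0.
\]

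\textbf{Step 1: the doubled node is a derivative.} By the continuous-extension convention in Definition \ref{div}, a repeated node gives a derivative:
\[
[a_0,\cdots,a_m,y,y]f=g'(y).
\]
For $y$ different from all $a_i$ this follows from the explicit formula: $g(y)$ equals $\sum_i f(a_i)/\big((a_i-y)\prod_{j\neq i}(a_i-a_j)\big)$ plus $f(y)/\prod_i(y-a_i)$. This is differentiable in $y$, and its derivative is the limit of $\big([a_0,\cdots,a_m,y']f-[a_0,\cdots,a_m,y]f\big)/(y'-y)$. By symmetry and formula $(3)$, that quotient equals $[a_0,\cdots,a_m,y,y']f$. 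The finitely many points $y=a_i$ (and any coincidences among the $a_i$) are handled by continuity. Hence
\[
\int_0^1[a_0,\cdots,a_m,cx,cx]f\,dx=\int_0^1 g'(cx)\,dx=\frac{g(c)-g(0)}{c}.
\]

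\textbf{Step 2: recombine into a divided difference with node $0$.} By symmetry and the recursion $(3)$,
\[
\frac{g(c)-g(0)}{c}=\frac{[a_0,\cdots,a_m,c]f-[a_0,\cdots,a_m,0]f}{c-0}=[a_0,\cdots,a_m,c,0]f.
\]

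\textbf{Step 3: remove the node $0$.} In the explicit formula for $[0,y_0,\cdots,y_n](t\,h(t))$, the term at the node $0$ vanishes because $f(0)=0$. Each remaining term has the form
\[
\frac{y_i\,h(y_i)}{y_i\prod_{j\neq i}(y_i-y_j)}=\frac{h(y_i)}{\prod_{j\neq i}(y_i-y_j)}.
\]
This gives $[0,y_0,\cdots,y_n](t\,h)=[y_0,\cdots,y_n]h$, the analogue of $(7)$ at the point $0$. It extends to coincident nodes by continuity. Applying it with $h(t)=t^\alpha$ yields
\[
[a_0,\cdots,a_m,c,0]t^{\alpha+1}=[a_0,\cdots,a_m,c]t^\alpha,
\]
which is the claim.

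\textbf{Main obstacle.} The delicate point is justifying Step 1 uniformly on $[0,c]$, including near $y=0$ and wherever $cx$ meets some $a_i$. This is needed so that $x\mapsto g'(cx)$ is integrable and the fundamental theorem of calculus applies. I would handle it with the Hermite--Genocchi formula (Theorem \ref{hg}) applied to $f'$, which is continuous on $[0,+\infty)$ because $\alpha>0$. Differentiating under the simplex integral shows that $g$ is $C^1$ on $[0,c]$ with $g'(y)$ given by the continuous extension $[a_0,\cdots,a_m,y,y]f$. This is legitimate whenever $f$ has enough derivatives on $(0,+\infty)$, and near $0$ the relevant combinations of higher derivatives stay integrable since every node $a_i>0$.

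If this regularity argument proves awkward, the fallback is to prove the identity first for $\alpha\in\mathbb{Z}^+$ via Lemma \ref{sint}, then pass to general $\alpha$ by approximation.
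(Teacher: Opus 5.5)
Your proposal is correct and follows the paper's proof: read the doubled node as the derivative of $y\mapsto[a_0,\cdots,a_m,y]t^{\alpha+1}$, apply the fundamental theorem of calculus to obtain $[a_0,\cdots,a_m,c,0]t^{\alpha+1}$, then remove the node $0$. The differences are minor: you remove the node $0$ directly from the explicit formula (the case $y_0=0$ of formula $(2)$) instead of using Lemma \ref{plus1} with $a_0=1$ plus a rescaling, and you justify the $C^1$ regularity near $y=0$ (including $c=a_m$) that the paper uses tacitly.
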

   \noindent{\bf Proof:}
   Define 
   \[
   f(x)=[a_0,\cdots,a_m,cx]t^{\alpha+1}.
   \]
   By definition,
   \[\begin{split}
  &\;\;\;\; [a_0,\cdots,a_m,cx,cx]t^{\alpha+1} \\
  &=\mathop{\mathrm{lim}}_{\varepsilon\rightarrow\, 0} \frac{ [a_0,\cdots,a_m,c(x+\varepsilon)]t^{\alpha+1}- [a_0,\cdots,a_m,cx]t^{\alpha+1}  }{c(x+\varepsilon)-cx}\\
  &=\frac{ f^\prime(x) }{c}.
   \end{split}
      \]
      Thus  \[
      \begin{split}
  &\;\;\;\;\int^1_0[a_0,\cdots,a_m,cx,cx]t^{\alpha+1} dx\\
  &=\frac{1}{c}\int^1_0 f^\prime(x) dx    \\
  &=\frac{1}{c}\left(f(1)-f(0)      \right)\\
  &=\frac{1}{c}\left([a_0,\cdots, a_m,c]t^{\alpha+1}-[a_0,\cdots,a_m,0]t^{\alpha+1}        \right)\\
  &=[a_0,\cdots,a_m,c,0]t^{\alpha+1}.  \\
  \end{split}
  \]
  By the symmetry of divided difference and Lemma \ref{plus1}, for $a_0=1$ one has
  \[
  [a_0,\cdots,a_m,c,0]t^{\alpha+1}=\frac{  [a_0,\cdots,a_m,c]t^{\alpha} -0\cdot [ a_0,\cdots,a_m,c   ]t^{\alpha}     }{a_0-0}= [a_0,\cdots,a_m,c]t^{\alpha}.    \]
  For $a_0>0$, it follows that
  \[
  \begin{split}
  &\;\;\;\;  [a_0,\cdots,a_m,c,0]t^{\alpha+1}       \\
  &= a_0^{\alpha+1-m-2} \left[1,\frac{a_1}{a_0},\cdots, \frac{a_m}{a_0},\frac{c}{a_0},0      \right]t^{\alpha+1}              \\
  &=   a_0^{\alpha-m-1} \left[1,\frac{a_1}{a_0},\cdots, \frac{a_m}{a_0},\frac{c}{a_0}      \right]t^{\alpha}                \\
  &= [a_0,\cdots,a_m,c]t^{\alpha}.    
  \end{split}
  \]
   $\hfill\Box$\\  
  \begin{lem}\label{l1}
   For $(k_1,\cdots,k_r)\in \left(\mathbb{Z}^+\right)^r$, define 
  \[
  X_j=x_1\cdots x_{k_1+\cdots +k_j}, \quad 1\leq j\leq r
  \]
  and 
  \[
  R_{k_1,\cdots,k_r}=\int_{(0,1)^{k_1+\cdots+k_r}} [1,X_1,\cdots, X_{r-1},X_r,X_r]t^{s+r}dx_1\cdots dx_{K_r}.
  \]
  Then  \[
R_{k_1,\cdots,k_r} =
\begin{cases}
s, & (k_1,\cdots,k_r)=(\{1\}^r),\\
 H_s^\star(k_1,\cdots ,k_{i-1},k_i-1) , &k_i\geq 2,k_{i+1}=\cdots=k_r=1.
\end{cases}
\]  \end{lem}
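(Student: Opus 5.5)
Write $K_r=k_1+\cdots+k_r$. The plan is to peel off the trailing $1$'s one at a time with Lemma \ref{eqp}, and then handle the last remaining entry with a separate computation.

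\emph{Step 1 (a trailing $1$).} Suppose $k_r=1$. Then $X_r=X_{r-1}x_{K_r}$. Integrate in the last variable $x=x_{K_r}$ only, keeping all other variables fixed, and apply Lemma \ref{eqp} with $c=X_{r-1}$, $\alpha=s+r-1$ and points $a=(1,X_1,\cdots,X_{r-1})$:
\[
\int_0^1[1,X_1,\cdots,X_{r-1},X_{r-1}x,X_{r-1}x]t^{s+r}\,dx=[1,X_1,\cdots,X_{r-1},X_{r-1}]t^{s+r-1}.
\]
The hypotheses $a_i>0$ hold on the open cube. After integrating over the remaining variables we get $R_{k_1,\cdots,k_r}=R_{k_1,\cdots,k_{r-1}}$. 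Iterating reduces to two cases: either $r=0$, which happens when $(k_1,\cdots,k_r)=(\{1\}^r)$, or the last entry $k_i\geq 2$.

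\emph{Step 2 (base cases).} In the all-ones case the final step gives
\[
\int_0^1[1,x,x]t^{s+1}\,dx=[1,1]t^{s}=s,
\]
again by Lemma \ref{eqp} with $m=0$, $a_0=1$, $c=1$. That is the value $s$.

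In the case $k_i\geq 2$, write $X_i=X_{i-1}y_1\cdots y_{k_i}$ and let $Y=X_{i-1}y_1\cdots y_{k_i-1}$, so that $X_i=Yx$ with $x=y_{k_i}$. Apply Lemma \ref{eqp} in $x$ with $c=Y$. This gives
\[
R_{k_1,\cdots,k_i}=\int [1,X_1,\cdots,X_{i-1},Y]t^{s+i-1}\,dx_1\cdots dy_{k_i-1}.
\]
This is exactly the integral representation of Theorem \ref{noint} for $H_s^\star(k_1,\cdots,k_{i-1},k_i-1)$. One has to check that the variables line up: $Y$ is the product of the first $K_{i-1}+k_i-1$ variables, as required.

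\emph{Main obstacle.} The only delicate point is justifying Lemma \ref{eqp} pointwise, including the limit at $x\to0$ where points coalesce with $0$, and the Fubini swaps. The integrands are bounded on compact sets of $\mathrm{Re}\,s>0$ by the Hermite--Genocchi estimates of Lemma \ref{limk}, applied with one extra repeated node, so dominated convergence and Fubini apply.

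For real $s>1$, which is the case relevant here, the integrands are positive and continuous on the closed cube, and this is immediate.
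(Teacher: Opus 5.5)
Your proposal is correct and follows essentially the same route as the paper. Both arguments strip the trailing $1$'s by repeated use of Lemma \ref{eqp}, ending with $\int_0^1[1,x,x]t^{s+1}\,dx=[1,1]t^s=s$ in the all-ones case. Your explicit application of Lemma \ref{eqp} with $c=Y$ to the last variable of the $k_i$-block fills in precisely what the paper dismisses as ``exactly the same analysis'' for the case $k_i\geq 2$.
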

  \noindent{\bf Proof:}
    By Lemma \ref{eqp}, we have
    \[
    \begin{split}
    &\;\;\;\; R_{\{1\}^r}      \\
    &=\int_{(0,1)^r} [1,X_1,\cdots,X_{r-1}, X_r,X_r]t^{s+r}dx_1\cdots dx_r             \\
    &=     \int_{(0,1)^{r-1}} [1,X_1,\cdots,X_{r-2}, X_{r-1},X_{r-1}]t^{s+r-1}dx_1\cdots dx_{r-1}  \\  
    &=\int^1_0[1,x_1,x_1]t^{s+1} dx_1        \\
    &= [1,1]t^s\\
    &=s. 
    \end{split}
    \]
     For $(k_1,\cdots,k_r)\neq (\{1\}^r) $ satisfying
     \[
     k_i\geq 2,k_{i+1}=\cdots=k_r=1,     \]
     by the exactly same analysis, we have 
     \[
     R_{k_1,\cdots,k_r} = H_s^\star(k_1,\cdots ,k_{i-1},k_i-1).    \]

   $\hfill\Box$\\
    \begin{lem}\label{l2}
   For $(k_1,\cdots,k_r)\in \left(\mathbb{Z}^+\right)^r$, 
   \[
    m\left(     Z_{k_1,\cdots,k_r}(s)  \right)\leq m\left(     Z_{\{1\}^r}(s)  \right),            \]
   \end{lem}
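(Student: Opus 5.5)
By Lemma \ref{l1}, the right endpoint of every interval $Z_{k_1,\cdots,k_r}(s)$ equals $R_{k_1,\cdots,k_r}$, so in every case
\[
m\left(Z_{k_1,\cdots,k_r}(s)\right)=R_{k_1,\cdots,k_r}-H_s^\star(k_1,\cdots,k_r).
\]
Both terms are integrals over the same cube $(0,1)^{K_r}$. I will therefore write the measure as a single integral:
\[
m\left(Z_{k_1,\cdots,k_r}(s)\right)=\int_{(0,1)^{K_r}}\Big([1,X_1,\cdots,X_r,X_r]t^{s+r}-[1,X_1,\cdots,X_r]t^{s+r-1}\Big)dx_1\cdots dx_{K_r}.
\]
The plan is to compare this integral with the same expression for $(\{1\}^r)$. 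That expression equals $s-H_s^\star(\{1\}^r)$.

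\textbf{Step 1: simplify the integrand.} Take the computation in the proof of Lemma \ref{dom} and let $X_{r+1}\to X_r$. Combined with Lemma \ref{plus1}, this gives
\[
[1,X_1,\cdots,X_r,X_r]t^{s+r}-[1,X_1,\cdots,X_r]t^{s+r-1}=\frac{X_r}{1-X_r}\Big([1,X_1,\cdots,X_r]t^{s+r-1}-[X_1,\cdots,X_r,X_r]t^{s+r-1}\Big).
\]
Call this integrand $\Phi(X_1,\cdots,X_r)$. By the Hermite--Genocchi formula it equals
\[
\frac{X_r}{1-X_r}\frac{\Gamma(s+r)}{\Gamma(s)}\int_{S_r}\Big[(u_0+\textstyle\sum u_jX_j)^{s-1}-(u_0X_r+\sum u_jX_j)^{s-1}\Big]du,
\]
which is nonnegative for $s>1$.

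\textbf{Step 2: an operator form.} Rather than comparing $\Phi$ pointwise, I will use Lemma \ref{op} and imitate the argument of Lemma \ref{rea1}(ii). Define the operator $\mathcal{L}$ on the sequence $(k_1,\cdots,k_r)$ through $\mathcal{F}$. Then
\[
m\left(Z_{k_1,\cdots,k_r}(s)\right)=\lim_{m\to\infty}H_s^\star(k_1,\cdots,k_r,\{1\}^m)-H_s^\star(k_1,\cdots,k_r).
\]
Writing $F=\mathcal{F}_{k_r}\cdots\mathcal{F}_{k_1}t^s$, the computation inside Lemma \ref{rea1} shows $(\mathcal{F}_1^m F)(1)\to F'(1)$. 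Hence the measure equals $F'(1)-F(1)$. For $(\{1\}^r)$ the same formula gives $s-H_s^\star(\{1\}^r)$, which is consistent with Lemma \ref{l1}.

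\textbf{Step 3: monotonicity along one operator step.} The claim becomes
\[
F'(1)-F(1)\le \big(\mathcal{F}_1^r t^s\big)'(1)-\big(\mathcal{F}_1^r t^s\big)(1).
\]
I will prove this by induction on $r$, using a single-step monotonicity statement for $\Psi(h):=h'(1)-h(1)$. This requires two facts.
\begin{enumerate}[(a)]
\item $\Psi(\mathcal{F}_k h)\le\Psi(\mathcal{F}_1 h)$. Indeed, $(\mathcal{F}_kh)'(1)=\int Q_h(v_1\cdots v_{k-1})dv$ and $(\mathcal{F}_kh)(1)=\int Q_h(u_1\cdots u_k)du$. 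Their difference is $\int_{(0,1)^{k-1}}\big(Q_h(w)-\int_0^1Q_h(wu)du\big)$. Since $Q_h$ is increasing by Lemma \ref{mont}, this difference is controlled by $Q_h(1^-)-\int_0^1Q_h=\Psi(\mathcal{F}_1h)$. The plan is to show that the function $w\mapsto Q_h(w)-\int_0^1Q_h(wu)du$ is increasing, using convexity-type properties inherited from Lemma \ref{mont}.
\item $\Psi(\mathcal{F}_1 h)$ is monotone in $\Psi(h)$, in the sense needed to chain the induction. Here $\Psi(\mathcal{F}_1h)=\int_0^1\big(Q_h(1)-Q_h(u)\big)du$.
\end{enumerate}

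\textbf{Main obstacle.} Step 3(b) is the hard part. Chaining requires comparing $Q_h$ for $h$ produced by arbitrary $\mathcal{F}_k$'s against $h$ produced by $\mathcal{F}_1$'s, and $\Psi$ alone does not determine $\int(Q_h(1)-Q_h)$. I would first try to prove a stronger pointwise inductive invariant. Writing $g_r=\mathcal{F}_1^r t^s$, the invariant is
\[
Q_h(1)-Q_h(u)\le Q_{g}(1)-Q_{g}(u)\quad\text{for all }u,
\]
i.e., $Q_{g}-Q_h$ is increasing. I would check that this invariant is preserved by applying $\mathcal{F}_k$ to $h$ and $\mathcal{F}_1$ to $g$, via the integral formulas for $Q_{\mathcal{F}_kh}$.

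If that fails, the fallback is the direct approach. Compare the integrand $\Phi$ under substituting $X_j\mapsto$ its all-ones analogue. Then show that the extra product variables $x$ only shrink the gaps $X_{j}-X_{j+1}$, using the Hermite--Genocchi expression together with integration in the extra variables via Lemma \ref{eqp}.
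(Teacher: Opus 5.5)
Your reduction is sound. $m(Z_{k_1,\cdots,k_r}(s))$ is the integral of the nonnegative kernel from Step~1. Step~2's identity $m(Z_{k_1,\cdots,k_r}(s))=F'(1)-F(1)$ is a correct reformulation, via Lemma~\ref{rlim} and $(\mathcal{F}_1^m\varphi)(1)=\frac{1}{m!}\int_0^1\varphi'(u)\log^m\frac{1}{1-u}\,du$.

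However, the inequality itself (Step~3) is never established. In (a), write $\rho(w)=Q_h(w)-\int_0^1Q_h(wu)\,du=\frac{1}{w}\int_0^w tQ_h'(t)\,dt$; you need $\rho$ to be increasing. This does not follow from Lemma~\ref{mont}, which only gives that $Q_h$ is positive and increasing: an increasing $Q$ with $Q'$ supported near $0$ makes $\rho$ decrease. So you would need a further invariant, such as monotonicity of $tQ_h'(t)$, propagated through every $\mathcal{F}_k$. In (b) you concede that $\Psi$ does not chain, and the invariant ``$Q_g-Q_h$ increasing'' is only proposed, not verified under $h\mapsto\mathcal{F}_kh$, $g\mapsto\mathcal{F}_1g$. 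The fallback names no function to be shown monotone, and Lemma~\ref{eqp} plays no role in it. What you have is a reduction plus a plan, not a proof.

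The missing idea is that no operator chaining is needed: the kernel is monotone pointwise in the block variables. Your Step~1 bracket simplifies, because $[1,X_1,\cdots,X_r]g-[X_1,\cdots,X_r,X_r]g=(1-X_r)[1,X_1,\cdots,X_r,X_r]g$, to $X_r[1,X_1,\cdots,X_r,X_r]t^{s+r-1}$. Hermite--Genocchi at order $r+1$ then gives, with $X_j=y_1\cdots y_j$,
\[
\Phi_s(y_1,\cdots,y_r)=y_1\cdots y_r\,\frac{\Gamma(s+r)}{\Gamma(s-1)}\int_{S_{r+1}}\mathcal{L}^{s-2}\,du_1\cdots du_{r+1},\qquad \mathcal{L}=u_0+u_1X_1+\cdots+u_{r-1}X_{r-1}+(u_r+u_{r+1})X_r .
\]
Differentiating under the integral, $\partial_{y_j}\Phi_s$ has integrand $\frac{y_1\cdots y_r}{y_j}\mathcal{L}^{s-2}\bigl(1+(s-2)\mathcal{P}_j/\mathcal{L}\bigr)$ with $\mathcal{P}_j=y_j\partial_{y_j}\mathcal{L}$. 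Since $\mathcal{P}_j$ is a sub-sum of the nonnegative terms of $\mathcal{L}$, we have $0\le\mathcal{P}_j/\mathcal{L}\le 1$. Hence the bracket is at least $\min\{1,s-1\}>0$ for $s>1$, and $\Phi_s$ is increasing in each $y_j$.

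The $j$-th block product $x_{K_{j-1}+1}\cdots x_{K_j}$ (with $K_0=0$) is at most $x_{K_j}$. So the integrand for $(k_1,\cdots,k_r)$ is bounded pointwise by $\Phi_s(x_{K_1},\cdots,x_{K_r})$. Integrating out the remaining variables gives exactly the integral representation of $m(Z_{\{1\}^r}(s))$. It is this pointwise monotonicity in the ratios $y_j=X_j/X_{j-1}$, not in the gaps $X_j-X_{j+1}$, that your proposal lacks.
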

  \noindent{\bf Proof:}
     For $(k_1,\cdots,k_r)\in \left(\mathbb{Z}^+\right)^r$, define 
  \[
  X_j=x_1\cdots x_{k_1+\cdots +k_j}, \quad 1\leq j\leq r.
  \]
  Let 
  \[
  \mathcal{M}_s(X_1,\cdots,X_r)=[1,X_1,\cdots,X_{r-1},X_r,X_r]t^{s+r}-[1,X_1,\cdots,X_{r-1}, X_r]t^{s+r-1}.
  \]
  By the symmetry of divided differences and Leibniz formula $(1),(2)$, we have 
  \[
  \begin{split}
  &\;\;\;\; [1,X_1,\cdots,X_{r-1}, X_r, X_r]\left(tg(t)    \right)\\
  &=  [X_r, 1,X_1,\cdots,X_{r-1}, X_r]\left(tg(t)    \right)             \\
  &= X_r [X_r, 1,X_1,\cdots, X_{r-1},X_r] \left(g(t)\right)+[1,X_1,\cdots, X_{r-1},X_r]\left(g(t)\right)\\
  &=X_r[1,X_1,\cdots,X_{r-1}, X_r, X_r]\left(g(t)    \right)+   [1,X_1,\cdots, X_{r-1},X_r]\left(g(t)\right).   \end{split}
  \]
  Thus 
  \[
      [1,X_1,\cdots,X_{r-1}, X_r, X_r]\left((t-X_r)g(t)    \right)      =[1,X_1,\cdots, X_{r-1},X_r]\left(g(t)\right).  \]
     From the identity
     \[
     t^{s+r}=(t-X_r)t^{s+r-1}+X_rt^{s+r-1},
     \]
 it follows that 
     \[
    \mathcal{M}_s(X_1,\cdots,X_r)=X_r [1,X_1,\cdots,X_{r-1},X_r,X_r]t^{s+r-1}.       \]
    By Theorem \ref{noint} and Lemma \ref{l1}, we have 
    \[
    m\left(Z_{k_1,\cdots,k_r}(s)\right)=\int_{(0,1)^{K_r}}   \mathcal{M}_s(X_1,\cdots,X_r) dx_1\cdots dx_{K_r}.  \tag{14} \]
    In particular, if $k_1=\cdots=k_r=1$, then
    \[
     m\left(Z_{\{1\}^r}(s)\right)=\int_{(0,1)^{r}}   \mathcal{M}_s(x_1,\cdots,x_1\cdots x_r) dx_1\cdots dx_{r}.  \tag{15}   \]
     
     By the Hermite-Genocchi formula, for $s>1$, one has 
     \[
     \begin{split}
     &\;\;\;\; \mathcal{M}_s(X_1,\cdots,X_r)\\
      &=X_r\frac{\Gamma(s+r)}{\Gamma(s-1)}\int_{S_{r+1}} \left(u_0+u_1X_1+\cdots+u_{r-1}X_{r-1}+ (u_r+u_{r+1}) X_r      \right)^{s-2} du_1\cdots du_{r+1}.\\
      \end{split}
           \]          Define $\Phi_s(y_1,y_2, \cdots,y_r)=\mathcal{M}_s(y_1,y_1y_2, \cdots,y_1\cdots y_r)$. Then 
   \[
     \begin{split}
     &\;\;\;\; \Phi_s(y_1,y_2, \cdots,y_r)\\
      &=y_1\cdots y_r\frac{\Gamma(s+r)}{\Gamma(s-1)}\int_{S_{r+1}} \\
      &\;\;\;\;\left[u_0+u_1y_1+\cdots+u_{r-1}y_1\cdots y_{r-1}+ (u_r+u_{r+1}) y_1\cdots y_r      \right]^{s-2} du_1\cdots du_{r+1}.\\
      \end{split}
           \]    
           Since 
           \[
           \begin{split}
           &\;\;\;\; \frac{\partial\Phi_s    }{\partial y_j}      \\
           &= \frac{\Gamma(s+r)}{\Gamma(s-1)} \cdot \frac{y_1\cdots y_r}{y_j}  \int_{S_{r+1}} \mathcal{L}^{s-2}\left( 1+(s-2)\frac{\mathcal{P}_j  }{\mathcal{L}}    \right) du_1\cdots du_{r+1} .                      \\
           \end{split}
           \]    
           Here
           \[
           \mathcal{L}=u_0+u_1y_1+\cdots+u_{r-1}y_1\cdots y_{r-1}+ (u_r+u_{r+1}) y_1\cdots y_r              \]
           and 
           \[
           \mathcal{P}_j=y_j\frac{\partial \mathcal{L} }{\partial y_j}=u_jy_1\cdots y_j+\cdots+u_{r-1}y_1\cdots y_{r-1}+(u_r+u_{r+1}) y_1\cdots y_r.
           \]
           If $s\geq 2$, it is clear that
           \[
           \frac{\partial\Phi_s    }{\partial y_j} >0, \quad y_j\in(0,1),\quad 1\leq j\leq r.            \]
           If $1<s<2$, since ${\mathcal{P}_j}/{\mathcal{L}}\leq 1$, 
           \[
           1+(s-2)\frac{\mathcal{P}_j  }{\mathcal{L}} \geq 0.            \]
           In conclusion, for $s>1$,
            \[
           \frac{\partial\Phi_s    }{\partial y_j} >0, \quad y_j\in(0,1),\quad 1\leq j\leq r.            \]
           As a result, 
           \[
           \begin{split}
           &\;\;\;\;    m\left(Z_{k_1,\cdots,k_r}(s)\right)      \\
           &= \int_{(0,1)^{K_r}} \Phi_s(x_1\cdots x_{K_1}, x_{K_1+1}\cdots x_{K_2},\cdots, x_{K_{r-1}+1}\cdots x_{K_r}         )       dx_1\cdots dx_{K_r}             \\
           &\leq  \int_{(0,1)^{K_r}} \Phi_s( x_{K_1},  x_{K_2},\cdots, x_{K_r}         )       dx_1dx_2\cdots dx_{K_r}             \\
           &\leq \int_{(0,1)^{r}} \Phi_s( x_{K_1},  x_{K_2},\cdots, x_{K_r}         )       dx_{K_1}dx_{K_2}\cdots dx_{K_r}         \\
           &\leq    m\left(Z_{\{1\}^r}(s)\right).             \end{split}
           \]  
         $\hfill\Box$\\

      For $s>1$ and  $x\in (1,s]$, if $x=s$, by Lemma \ref{rlim}, one has 
    \[
    x=s=\mathfrak{h}_s(({\{1\}^{\infty}}))   .
     \]
     If $x=H_s^\star(k_1,\cdots,k_r)$, by Lemma \ref{rlim}, one has 
     \[
     x=H_s^\star(k_1,\cdots,k_r)  =\mathfrak{h}_s\left( (k_1,\cdots, k_{r-1}, k_r+1,\{1\}^{\infty}) \right) .  \]
     If $$x\neq H_s^\star(k_1,\cdots,k_r)$$ for all $(k_1,\cdots,k_r)\in \left( \mathbb{Z}^+  \right)^r $, $r\geq 1$, from the basic properties of $$Z_{k_1,\cdots,k_r}(s),$$ there is a 
     \[
     {\bf k}=(k_1,\cdots,k_r,\cdots)\in \widehat{\mathcal{T}}
     \]
     such that
     \[
     x\in Z_{k_1,\cdots,k_r}(s)
     \]
     for all $r\geq 1$.
     As a result,
     \[
              H_s^\star (k_1,\cdots,k_r)  <x\leq  H_s^\star (k_1,\cdots,k_r) +   m\left(     Z_{k_1,\cdots,k_r}(s)  \right).   \]
              
              Since by Lemma \ref{l2}
              \[
              \begin{split}
              &\;\;\;\; m\left(     Z_{k_1,\cdots,k_r}(s)  \right)\leq m\left(     Z_{\{1\}^r}(s)  \right),                         \end{split}              
              \]
              we have 
              \[
              \mathop{\mathrm{lim}}_{r\rightarrow +\infty} m\left(     Z_{k_1,\cdots,k_r}(s)  \right) =   \mathop{\mathrm{lim}}_{r\rightarrow +\infty} \left(  s-H_s^\star\left(\{1\}^r\right)   \right) =0.           \]
              Therefore,
              \[
              x= \mathop{\mathrm{lim}}_{r\rightarrow +\infty} H_s^\star (k_1,\cdots,k_r) =\mathfrak{h}_s({\bf k}).                            \]
  In a word,  the map $\mathfrak{h}_s: \widehat{\mathcal{T}}\rightarrow  (1,s]   $ is bijective for $s>1$.  Theorem \ref{com} $(ii)$ is proved.
  
  \begin{lem}\label{bgx}
Let $0<X<1$ and 
  \[
  0<y_r<\cdots<y_2<y_1=1.
  \]
  If $\mathrm{Re}(s)>0$, then 
  \[
  \begin{split}
 &\;\;\;\; [1,Xy_1,Xy_2,\cdots, Xy_r]t^{s+r-1}\\
 &=\sum_{n=1}^{+\infty} \left( X^{n-1}[y_1,\cdots,y_r]t^{n+r-2}      -X^{s+n-1}[y_1,\cdots,y_r]t^{s+n+r-2}\right).\\
 \end{split}
  \]
  \end{lem}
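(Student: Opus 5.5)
The plan is to reduce to the first-step recursion of Lemma \ref{ker}, $(i)$, and then expand the geometric factor $\frac{1}{1-X}$ as a power series. Apply Lemma \ref{ker}, $(i)$ with $X_1=Xy_1=X$, $X_j=Xy_j$. Its hypotheses hold because $0<Xy_r<\cdots<Xy_1=X<1$. This gives
\[
[1,X,Xy_2,\cdots,Xy_r]t^{s+r-1}=\frac{1}{1-X}[1,Xy_2,\cdots,Xy_r]t^{s+r-2}-\frac{X^s}{1-X}[1,y_2,\cdots,y_r]t^{s+r-2}.
\]
Since $y_1=1$, the last divided difference is exactly $[y_1,\cdots,y_r]t^{s+r-2}$. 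Hence the second term equals $\sum_{n\ge1}X^{s+n-1}[y_1,\cdots,y_r]t^{s+r-2}$. This does not yet match the claimed term $X^{s+n-1}[y_1,\cdots,y_r]t^{s+n+r-2}$, so a one-step expansion is not enough.

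The correct route is to expand in $t$ rather than peel off one point. Write $t^{s+r-1}=t^{r-1}\cdot t^{s}$ and consider $g(t)=t^{r-1}\bigl(1-t^s\bigr)/(1-t)$ on the set of nodes. Using formula $(7)$, $[1,y_1',\cdots,y_r']((t-1)g)=[y_1',\cdots,y_r']g$ with $y_j'=Xy_j$, we get
\[
[1,Xy_1,\cdots,Xy_r]t^{s+r-1}=[1,Xy_1,\cdots,Xy_r]\bigl(t^{s+r-1}-t^{r-1}\bigr)=-[Xy_1,\cdots,Xy_r]\Bigl(t^{r-1}\tfrac{1-t^s}{1-t}\Bigr).
\]
The first equality holds because $t^{r-1}$ has degree $<r$, so its $r$-th divided difference vanishes. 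The sign is fixed by $(t-1)g=-(t^{r-1}-t^{s+r-1})$, which gives $[Xy_1,\cdots,Xy_r]\bigl(t^{r-1}(1-t^s)/(1-t)\bigr)$ with a plus sign once signs are tracked. For $0<t<1$ we then expand $\frac{1-t^s}{1-t}=\sum_{n\ge1}(t^{n-1}-t^{s+n-1})$. This series converges for $t\in(0,1)$ since $|t^{s+n-1}|=t^{\sigma+n-1}$. Termwise, this gives
\[
\sum_{n\ge1}[Xy_1,\cdots,Xy_r]\bigl(t^{n+r-2}-t^{s+n+r-2}\bigr).
\]

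Next apply the homogeneity rule already used in Lemma \ref{ker}: $[Xy_1,\cdots,Xy_r]t^{\beta}=X^{\beta-r+1}[y_1,\cdots,y_r]t^{\beta}$ for $\beta=n+r-2$ and $\beta=s+n+r-2$. This produces exactly the factors $X^{n-1}$ and $X^{s+n-1}$ of the statement.

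The main obstacle is justifying the interchange of the divided difference with the infinite sum. All the nodes $Xy_j$ lie in the compact interval $[Xy_r,X]\subset(0,1)$ and are distinct, so the explicit formula of Definition \ref{div} expresses the divided difference as a finite linear combination of point values. The interchange is therefore immediate from pointwise convergence at each of the finitely many nodes. The only care needed is the bookkeeping of signs in formula $(7)$ and the check that the resulting series converges absolutely, which holds because it is dominated by a geometric series in $X$.
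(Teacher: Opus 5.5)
Your proof is correct, and it is organized differently from the paper's. The paper applies the explicit formula of Definition \ref{div} to all $r+1$ nodes at once. The node $1$ contributes $\prod_{i=1}^r(1-Xy_i)^{-1}$, which is matched with $\sum_{n\geq 1}X^{n-1}[y_1,\cdots,y_r]t^{n+r-2}$ through the generating-function identity of Lemma \ref{sint}. The nodes $Xy_i$ contribute terms that are matched with the second series by summing a geometric series inside the explicit formula.

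You instead remove the node $1$ first. Subtracting $t^{r-1}$, whose $r$-th divided difference vanishes, and applying formula $(7)$ gives $[1,Xy_1,\cdots,Xy_r]t^{s+r-1}=[Xy_1,\cdots,Xy_r]\bigl(t^{r-1}\phi_s(t)\bigr)$ with $\phi_s(t)=(1-t^s)/(1-t)$. This is the same function the paper uses in the proof of Lemma \ref{conv}. A single pointwise geometric expansion of $\phi_s$, together with the rescaling rule $[Xy_1,\cdots,Xy_r]t^{\beta}=X^{\beta-r+1}[y_1,\cdots,y_r]t^{\beta}$, then produces both series at once.

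Your route has three advantages:
\begin{itemize}
\item it treats the two halves of the series uniformly;
\item it needs neither Lemma \ref{sint} nor any partial-fraction identity, since the terms are matched directly;
\item it makes visible that $y_1=1$ is never used beyond ensuring that the $Xy_j$ are distinct points of $(0,1)$.
\end{itemize}
It also sidesteps the exponent bookkeeping in the paper's second computation, where $y_i^{s+r-2}$ should read $y_i^{s+r-1}$. The paper's version, for its part, is a direct computation that reuses an existing lemma.

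Two small repairs are needed.
\begin{itemize}
\item The display right after you invoke $(7)$ should carry a plus sign, since $(t-1)g(t)=t^{s+r-1}-t^{r-1}$ exactly. You correct this in words, but the display as written is wrong.
\item The opening paragraph via Lemma \ref{ker}$(i)$ is a dead end and can be deleted.
\end{itemize}
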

  \noindent{\bf Proof:}
  By the explicit formula of divided differences, the left hand side is 
  \[
  \frac{1}{\prod_{i=1}^r (1-Xy_i) }+X^s\sum_{i=1}^r \frac{ y_i^{s+r-1}}{(Xy_i-1)  \prod_{j\neq i} (y_i-y_j)  }.
  \]
  By Lemma \ref{sint}, 
  \[
   \frac{1}{\prod_{i=1}^r (1-Xy_i) }=\sum_{n=1}^{+\infty} X^{n-1} [y_1,\cdots,y_r] t^{n+r-2}.
     \]
     From the explicit formula of divided differences, it follows that
     \[
     \begin{split}
     &\;\;\;\; \sum_{n=1}^{+\infty} X^{s+n-1}[y_1,\cdots,y_r]t^{s+n+r-2}             \\
     &=\sum_{n=1}^{+\infty} X^{s+n-1} \sum_{i=1}^r \frac{  y_i^{s+n+r-2}    }{\prod_{j\neq i}(y_i-y_j)}                      \\
     &=X^s \sum_{i=1}^r \frac{  y_i^{s+r-2}    }{\prod_{j\neq i}(y_i-y_j)} \sum_{n=1}^{+\infty}  \left(  Xy_i \right)^{n-1}                      \\
     &=X^s \sum_{i=1}^r \frac{  y_i^{s+r-2}    }{ (1-Xy_i) \prod_{j\neq i}(y_i-y_j)}.     \end{split}
     \]
     As $(Xy_i-1)^{-1}=-(1-Xy_i)^{-1}$, we have 
     \[
  \begin{split}
 &\;\;\;\; [1,Xy_1,Xy_2,\cdots, Xy_r]t^{s+r-1}\\
 &=\sum_{n=1}^{+\infty} \left( X^{n-1}[y_1,\cdots,y_r]t^{n+r-2}      -X^{s+n-1}[y_1,\cdots,y_r]t^{s+n+r-2}\right).\\
 \end{split}
  \]
         $\hfill\Box$\\
         
         \begin{lem}\label{spn}         For $\mathrm{Re}(s)>0$ and $(k_1,\cdots,k_r)\in \left(\mathbb{Z}^+\right)^r$, one has 
         \[
         \bigg{|}   H_n^\star(k_1,\cdots,k_r)-   H_{s+n}^\star(k_1,\cdots,k_r) \bigg{|}\leq \frac{c_{s,{\bf k}} \left( \mathrm{log}\,(n+2)   \right)^{r-1}}{n+1},\qquad n\geq 1.
         \]
         Here $c_{s,{\bf k}}$ depends only on $s$ and ${\bf k}=(k_1,\cdots,k_r)$.
         \end{lem}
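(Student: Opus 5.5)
We will use the integral representation of Theorem \ref{noint} and essentially re-run the argument of Lemma \ref{conv}, with $M$ replaced by $n$. By Theorem \ref{noint}, for $\mathrm{Re}(s)>0$ and $n\in\mathbb{Z}^+$ (so $\mathrm{Re}(s+n)>0$),
\[
H_n^\star(k_1,\cdots,k_r)-H_{s+n}^\star(k_1,\cdots,k_r)=\int_{(0,1)^{K_r}}[1,X_1,\cdots,X_r]\left(t^{n+r-1}-t^{n+s+r-1}\right)dx_1\cdots dx_{K_r}.
\]
This is precisely the integrand that appeared in Lemma \ref{conv}: by Lemma \ref{ker}$(iii)$ it equals $P_{n,r}(s;X_1,\cdots,X_r)-A_r(s;X_1,\cdots,X_r)$. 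So the statement is in fact the pointwise-in-$s$ content of Lemma \ref{conv}. The first step will be to make this identification explicit.

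We then take $\Omega=\{s\}$, a compact subset of $\mathrm{Re}\,s>0$, and invoke Lemma \ref{conv}. It gives
\[
\left|H_n^\star-H_{s+n}^\star\right|\le \int_{(0,1)^{K_r}}\left|P_{n,r}-A_r\right|\,dx\le C_{{\bf k},\{s\}}\frac{(\log(n+2))^{r-1}}{n+1},
\]
and we set $c_{s,{\bf k}}=C_{{\bf k},\{s\}}$.

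If one prefers not to lean on the bookkeeping of Lemma \ref{conv}, we will repeat its key steps directly.
\begin{enumerate}[(a)]
\item Write $t^{n+r-1}-t^{n+s+r-1}=(1-t)G_{n,r}(t)$ with $G_{n,r}(t)=t^{n+r-1}\phi_s(t)$ and $\phi_s(t)=(1-t^s)/(1-t)$.
\item Use identity $(7)$ to reduce $[1,X_1,\cdots,X_r]$ to $[X_1,\cdots,X_r]G_{n,r}$.
\item Bound the derivatives by $(8)$, namely $|\phi_s^{(j)}(t)|\le Ct^{-j}$, which together with Leibniz gives $|G_{n,r}^{(r-1)}(t)|\le C(n+r)^{r-1}t^n$.
\item Apply Hermite--Genocchi and integrate using $(4)$. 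This yields the factor $\frac{n!}{(n+r-1)!}(n+1)^{-k_1}H^\star_{n+1}(k_2,\cdots,k_r)\le (n+1)^{-r}(2\log(n+2))^{r-1}$.
\end{enumerate}

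The only delicate point is step (c) when $0<\mathrm{Re}(s)<1$: the derivatives $\phi_s^{(j)}$ blow up like $t^{-j}$ near $0$. This blow-up is absorbed by the factor $t^{n+i}$ coming from the Leibniz expansion, since $n\ge1$ and $i\ge j$. I expect this step to need care but no new idea.

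The case $r=1$ is handled separately and directly. Here the difference is $\int X_1^n(1-X_1^s)/(1-X_1)\,dx$, which by $(6)$ is bounded by $C(n+1)^{-k_1}\le C/(n+1)$.
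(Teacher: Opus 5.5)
Your proposal is correct and is essentially the paper's proof. The paper also writes $H_n^\star-H_{s+n}^\star$ as $\int_{(0,1)^{K_r}}[1,X_1,\cdots,X_r]\left(t^{n+r-1}-t^{s+n+r-1}\right)dx_1\cdots dx_{K_r}$, identifies the integrand with $P_{n,r}(s;X_1,\cdots,X_r)-A_r(s;X_1,\cdots,X_r)$ via Lemma \ref{ker}$(iii)$, and concludes directly from Lemma \ref{conv}, whose proof your steps (a)--(d) and your $r=1$ case retrace; along the way your version corrects the typos $t^{s+r-1}$ (for $t^{n+r-1}$) and $\log(M+2)$ (for $\log(n+2)$) in the paper's displays.
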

         \noindent{\bf Proof:} By definition,
         \[
         H_n^\star(k_1,\cdots,k_r)-   H_{s+n}^\star(k_1,\cdots,k_r) =\int_{(0,1)^{K_r}}  [1,X_1,\cdots,X_r] \left(t^{s+r-1}-t^{s+n+r-1}    \right)  dx_1\cdots dx_{K_r}.      \]
      By Lemma \ref{ker}, $(iii)$,  
      \[
      \begin{split}
      &\;\;\;\;[1,X_1,\cdots,X_r] \left(t^{s+r-1}-t^{s+n+r-1}    \right) \\
      &= A_r(n; X_1,\cdots, X_r) - A_r(s+n; X_1,\cdots, X_r)             \\
      &= P_{n,r}(s; X_1,\cdots,X_r)-A_r(s; X_1,\cdots, X_r).      \end{split}      \]  
      From  Lemma \ref{conv}, it follows that
        \[
         \bigg{|}   H_n^\star(k_1,\cdots,k_r)-   H_{s+n}^\star(k_1,\cdots,k_r) \bigg{|}\leq \frac{ c_{s,{\bf k}} \left( \mathrm{log}\,(M+2)   \right)^{r-1}}{n+1},\qquad n\geq 1.
         \]
         for some $c_s>0$.                            $\hfill\Box$\\

         By Lemma \ref{bgx}, we have 
         \[
         \begin{split}
         &\;\;\;\; H_s^\star(k_1,\cdots,k_r)         \\
         &=  \int_{(0,1)^{K_r}}  [1,X_1,\cdots,X_r] t^{s+r-1}   dx_1\cdots dx_{K_r}              \\
         &=  \int_{(0,1)^{K_r}}  \sum_{n=1}^{+\infty} \left( X_1^{n-1}\left[1,\frac{X_2}{X_1},\cdots,\frac{X_r}{X_1}\right]t^{n+r-2}      -X_1^{s+n-1}  \left[1,\frac{X_2}{X_1},\cdots,\frac{X_r}{X_1}\right]t^{s+n+r-2}\right)  dx_1\cdots dx_{K_r}           \\
         &= \sum_{n=1}^{+\infty} \left(\frac{H_n^\star(k_2,\cdots,k_r)}{n^{k_1}} -  \frac{H_{s+n}^\star(k_2,\cdots,k_r)}{(s+n)^{k_1}}    \right) . \\
              \end{split}
 \]      
 Since 
 \[
 \begin{split}
 &\;\;\;\;\Bigg{|}  \frac{H_n^\star(k_2,\cdots,k_r)}{n^{k_1}} -  \frac{H_{s+n}^\star(k_2,\cdots,k_r)}{(s+n)^{k_1}}        \Bigg{|}\\
 &\leq  H_n^\star(k_2,\cdots,k_r) \Bigg{|}  \frac{1}{n^{k_1}} -  \frac{1}{(s+n)^{k_1}}        \Bigg{|} + \Bigg{|}  \frac{H_n^\star(k_2,\cdots,k_r)- H_{s+n}^\star(k_2,\cdots,k_r)}{(s+n)^{k_1}}        \Bigg{|}, \\
 \end{split}
 \]   
 by Lemma \ref{spn}, the series 
 \[
 \sum_{n=1}^{+\infty} \left(\frac{H_n^\star(k_2,\cdots,k_r)}{n^{k_1}} -  \frac{H_{s+n}^\star(k_2,\cdots,k_r)}{(s+n)^{k_1}}    \right) \]
 is absolutely convergent.
 To prove that 
 \[
     \mathop{\mathrm{lim}}_{\mathrm{Re} (s)\rightarrow +\infty} \mathfrak{h}_s\Big{|}_{\mathcal{T}}=\eta
          \]
          with bounded imaginary part $\mathrm{Im}\,(s)$,
          it suffices to show that for any $m\geq 1$
          \[
            \mathop{\mathrm{lim}}_{\mathrm{Re} (s)\rightarrow +\infty} H_s^\star(k_1,\cdots,k_r)=\zeta^\star(k_1,\cdots,k_r)          \]
         uniformly   for every $(k_1,\cdots,k_r) \; \rotatebox[origin=c]{180}{\text{ $\succ$}}  (2,\{1\}^{m})$. 
         
         \begin{lem}\label{zexp}
         For $\mathrm{Re}(s)>0$, and $(k_1,\cdots,k_r)\in \left(\mathbb{Z}^+\right)^r$, there is positive function $f_{k_1,\cdots,k_r}(u)$ on $(0,+\infty)$ such that 
         \[
         H_s^\star (k_1,\cdots,k_r)=\int^{+\infty}_0 \left(  1-e^{-su    }      \right) f_{k_1,\cdots,k_r}(u)du         \]
         \end{lem}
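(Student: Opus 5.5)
The plan is to start from Lemma \ref{op}, which gives $H_s^\star(k_1,\cdots,k_r)=\left(\mathcal{F}_{k_r}\cdots\mathcal{F}_{k_1}t^s\right)(1)$. I would show that each $\mathcal{F}_k$ is a positive integral operator acting only on the difference $h(1)-h(\cdot)$. Then the whole composite is a single positive linear functional $h\mapsto\int_0^1(h(1)-h(t))\rho_{\bf k}(t)\,dt$ with $\rho_{\bf k}\geq 0$. Applying it to $h=t^s$ and substituting $t=e^{-u}$ gives the claim with $f_{k_1,\cdots,k_r}(u)=\rho_{\bf k}(e^{-u})e^{-u}$.

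First step: compute $\mathcal{F}_k$ explicitly. On the level set $u_1\cdots u_k=t$, the integral over $(0,z)\times(0,1)^{k-1}$ has density $\log^{k-1}(z/t)/(k-1)!$ for $0<t<z$ (standard volume of products of uniforms). Hence
\[
(\mathcal{F}_kh)(z)=\int_0^1 \bigl(h(1)-h(t)\bigr)K_k(z,t)\,dt,\qquad K_k(z,t)=\mathbf{1}_{t<z}\,\frac{\log^{k-1}(z/t)}{(k-1)!\,(1-t)} .
\]
For $r=1$ this already gives $\rho_{(k_1)}(t)=K_{k_1}(1,t)>0$.

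Induction step: suppose $g=\mathcal{F}_{k_{r-1}}\cdots\mathcal{F}_{k_1}h$ satisfies $g(z)=\int_0^1(h(1)-h(t))\kappa(z,t)\,dt$, where $\kappa(z,t)\geq0$ is nondecreasing in $z$. Then
\[
g(1)-g(w)=\int_0^1(h(1)-h(t))\bigl(\kappa(1,t)-\kappa(w,t)\bigr)dt
\]
with a nonnegative kernel. Applying $\mathcal{F}_{k_r}$ and using Fubini gives a new kernel
\[
\kappa'(z,t)=\int_0^1 K_{k_r}(z,w)\bigl(\kappa(1,t)-\kappa(w,t)\bigr)dw\geq 0 .
\]
This kernel is again nondecreasing in $z$, because $K_{k_r}(z,w)$ is nondecreasing in $z$. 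Evaluating at $z=1$ produces $\rho_{\bf k}(t)=\kappa'(1,t)$. Strict positivity on $(0,1)$ follows since $\kappa(\cdot,t)$ is strictly increasing on $(t,1)$ (each $K$ is).

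The main obstacle is justifying the interchanges (Fubini) for complex $s$ with $\mathrm{Re}(s)>0$. I would first run the argument for $h(t)=t$, i.e.\ $s=1$. There all integrands are nonnegative, so Tonelli applies and gives $\int_0^1(1-t)\rho_{\bf k}(t)\,dt=H_1^\star(k_1,\cdots,k_r)<\infty$. For general $s$, the bound $|1-t^s|\leq C(1-t)$ from inequality (6) (with $C$ depending on $s$) shows that $\int_0^1 |1-t^s|\rho_{\bf k}(t)\,dt<\infty$. The same domination holds at every intermediate stage, because $|h(1)-h(t)|\le C(1-t)$ propagates through positive kernels. So Fubini is legitimate and the identity $H_s^\star=\int_0^1(1-t^s)\rho_{\bf k}(t)\,dt$ holds. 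The change of variables $t=e^{-u}$ then finishes the proof.
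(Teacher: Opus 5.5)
Your argument is correct in its main line, but it takes a different route from the paper. The paper does not use Lemma \ref{op} here. It inducts on depth by peeling off the \emph{first} index, via the series $H_s^\star(k_1,\cdots,k_r)=\sum_{n\geq 1}\bigl(H_n^\star(k_2,\cdots,k_r)n^{-k_1}-H_{s+n}^\star(k_2,\cdots,k_r)(s+n)^{-k_1}\bigr)$ obtained from Lemma \ref{bgx}. It inserts the inductive representation of the tail at the points $n$ and $s+n$, and writes $(1-e^{-zu})/z^{k}$ as a Laplace transform of $\bigl(x^{k-1}-(x-u)_+^{k-1}\bigr)/(k-1)!$. Summing $\sum_n(e^{-nx}-e^{-(s+n)x})=(1-e^{-sx})/(e^x-1)$ then gives a one-variable recursion $f_{k_1,\cdots,k_r}(x)=\frac{1}{(k_1-1)!}\int_0^{\infty}\frac{x^{k_1-1}-(x-u)_+^{k_1-1}}{e^x-1}f_{k_2,\cdots,k_r}(u)\,du$, whose positivity is immediate. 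You instead work multiplicatively on $(0,1)$ and add the \emph{last} index through the operator factorization. This forces you to carry a two-variable kernel $\kappa(z,t)$ with the invariant ``nondecreasing in $z$'' so that $\kappa(1,t)-\kappa(w,t)\geq 0$. That is extra bookkeeping. In exchange, your proof is self-contained given Lemma \ref{op}. Your domination of every stage by the case $s=1$ (Tonelli plus inequality (6)) also genuinely justifies the interchanges for complex $s$, which the paper's proof leaves implicit.

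One sub-claim needs repair. $K_1(z,t)=\mathbf{1}_{t<z}/(1-t)$ is constant in $z$ on $(t,1]$, so $\kappa(\cdot,t)$ need not be strictly increasing on $(t,1)$. This does not self-correct later: for ${\bf k}$ starting with $(1,1)$ one gets $\kappa(z,t)=\frac{1}{1-t}\int_0^{\min(z,t)}\frac{dw}{1-w}$, still constant for $z\geq t$. Strict positivity of $\rho_{\bf k}$ nevertheless holds, for a different reason.
\begin{itemize}
\item By induction, $\kappa(1,t)>0$.
\item $\kappa(w,t)\to 0$ as $w\to 0^+$. At the first stage $\kappa(w,t)=0$ for $w\leq t$, and afterwards $\kappa'(w,t)\leq \kappa(1,t)\int_0^w K_{k_r}(w,v)\,dv\leq \kappa(1,t)\,\frac{w}{1-w}$.
\item Hence $\kappa(1,t)-\kappa(w,t)>0$ for all small $w$. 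Since $K_{k_r}(1,w)>0$ on $(0,1)$, this gives $\kappa'(1,t)>0$.
\end{itemize}
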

          \noindent{\bf Proof:} 
          By the inductive definition of $H_s^\star(k)$, one has 
               \[
 H_s^\star(k_1)=\mathop{\mathrm{lim}}_{M\rightarrow +\infty} \bigg{(}H_M^\star(k_1)-G_M^\star(s;k_1)     \bigg{)}=\sum_{m=1}^{+\infty} \left( \frac{1}{m^{k_1}}-\frac{1}{ (m+s)^{k_1}   }     \right).     \tag{16}
 \]
For $\mathrm{Re}(z)\geq 1$, we have
  \[
  \frac{1}{z^{k_1}}=\frac{1}{\Gamma(k_1)} \int^{+\infty}_0 e^{-zu} u^{k_1-1}du.
  \]
  From the formula $(16)$, it follows that
  \[
  H_s^\star(k_1)=\frac{1}{\Gamma(k_1)} \int^{+\infty}_0 \left(  1-e^{-su}    \right)\frac{ u^{k_1-1}du  }{e^u-1   }.
  \]
  We assume that the Lemma is proved for $r<m$. For $r=m$, we have 
  \[
  \begin{split}
  &\;\;\;\;   H_s^\star (k_1,\cdots,k_r)       \\
  &=  \sum_{n=1}^{+\infty} \left(\frac{H_n^\star(k_2,\cdots,k_r)}{n^{k_1}} -  \frac{H_{s+n}^\star(k_2,\cdots,k_r)}{(s+n)^{k_1}}    \right) \\
  &= \sum_{n=1}^{+\infty} \left[    \int^{+\infty}_0 \left(\frac{ 1-e^{-nu    } }{ n^{k_1}  } -  \frac{ 1-e^{-(s+n)u    } }{ (s+n)^{k_1}  } \right) f_{k_2,\cdots,k_r}(u)du        \right]. \\
  \end{split}
  \]
  For $k\in\mathbb{Z}^+$ and $\mathrm{Re}(z)\geq 1$, it is easy to check that
  \[
  \frac{1-e^{-zu}}{z^k}=\frac{1}{(k-1)!}\int^{+\infty}_0 e^{-zx}\left(x^{k-1}-(x-u)_+^{k-1}       \right)dx.
  \]
  Here 
  \[
  (x-u)_+= 
  \begin{cases}
  x-u,&x>u;\\
  0,&x\leq u.\\
  \end{cases}
  \]
  Thus 
   \[
  \begin{split}
  &\;\;\;\;   H_s^\star (k_1,\cdots,k_r)       \\
  &= \sum_{n=1}^{+\infty} \left[    \int^{+\infty}_0 \left(   \frac{1}{(k-1)!}\int^{+\infty}_0 (e^{-nx} -e^{-(s+n)x}     )\left(x^{k-1}-(x-u)_+^{k-1}       \right)dx \right) f_{k_2,\cdots,k_r}(u)du        \right]                       \\
  &=    \int^{+\infty}_0 \left(   \frac{1}{(k-1)!}\int^{+\infty}_0  \frac{ 1-e^{-sx}  }{e^x-1} \left(x^{k-1}-(x-u)_+^{k-1}       \right)dx \right) f_{k_2,\cdots,k_r}(u)du \\
  &=  \int^{+\infty}_0 \left(1-e^{-sx}\right) f_{k_1,\cdots,k_r}(x) dx.  \end{split}
  \]      
  Here 
  \[
  f_{k_1,\cdots,k_r}(x)=\frac{1}{(k_1-1)!}\int^{+\infty}_0 \frac{   x^{k-1}-(x-u)_+^{k-1}        }{     e^x-1    } f_{k_2,\cdots,k_r}(u) du.
    \]
       $\hfill\Box$\\
       
       \begin{rem}\label{nze}
       By Lemma \ref{zexp}, as $|e^{-su}|<1$ for $\mathrm{Re}(s)>0$ and $u>0$, it is clear that 
        \[
         H_s^\star (k_1,\cdots,k_r)\neq 0    \]
       for $\mathrm{Re}(s)>0$. Furthermore,
        \[
       \mathrm{Re}\left(  H_s^\star (k_1,\cdots,k_r)\right)>0   \]
       for $\mathrm{Re}(s)>0$.                   \end{rem}
                   
         \begin{lem}\label{zlog} If $r,m\in \mathbb{Z}^+$ and $(k_1,\cdots,k_r) \; \rotatebox[origin=c]{180}{\text{ $\succ$}}  (\{1\}^{m})$,        
         then         \[
         \Big{|}  H_s^\star(k_1,\cdots,k_r)     \Big{|}\leq 3\left(1+ \mathrm{log}\,(1+|s|) \right)^m
         \]
         and 
         \[
          \Big{|}  H_s^\star(\{1\}^r)     \Big{|}\leq  3|s|      \]
         for $\mathrm{Re}(s)> 1$ and $r\geq 1$.
         \end{lem}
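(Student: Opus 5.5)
The plan is to reduce both estimates to the real case via the positive integral representation of Lemma \ref{zexp}, and then use the real-variable monotonicity of Section 4 together with the explicit formula of Lemma \ref{e1}. The hypothesis $(k_1,\cdots,k_r)\;\rotatebox[origin=c]{180}{\text{ $\succ$}}\;(\{1\}^m)$ means $(\{1\}^m)\succ(k_1,\cdots,k_r)$. So either $(k_1,\cdots,k_r)=(\{1\}^j)$ with $j<m$, or $k_1=\cdots=k_{i-1}=1$ and $k_i\geq 2$ for some $i\leq \min\{m,r\}$.

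\emph{Step 1 (reduction to real $s$).} By Lemma \ref{zexp}, $H_s^\star(k_1,\cdots,k_r)=\int_0^{+\infty}(1-e^{-su})f_{k_1,\cdots,k_r}(u)\,du$ with $f_{k_1,\cdots,k_r}>0$, and the same formula holds for every real $x>0$. For $\mathrm{Re}(s)>0$ and $u>0$ one has $|1-e^{-su}|\leq\min\{2,|s|u\}$. For real $x>0$ one has $1-e^{-xu}\geq(1-e^{-c})\min\{1,xu/c\}$, for any fixed $c>0$. Choosing $x=\lambda|s|$ for a suitable $\lambda\geq 1$ gives a pointwise bound $|1-e^{-su}|\leq C_\lambda(1-e^{-xu})$. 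Integrating against $f_{k_1,\cdots,k_r}$ then yields
\[
|H_s^\star(k_1,\cdots,k_r)|\leq C_\lambda\, H_{\lambda|s|}^\star(k_1,\cdots,k_r).
\]

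\emph{Step 2 (real monotonicity).} For real $x>1$, Theorem \ref{rord} and Lemma \ref{rea1} give the following. If $k_i\geq 2$ at a position $i\leq m$, then
\[
H_x^\star(k_1,\cdots,k_r)\leq H_x^\star(\{1\}^{i-1},2,\{1\}^{r-i})<H_x^\star(\{1\}^{i-1})\leq H_x^\star(\{1\}^{m-1}).
\]
Here the middle inequality is Lemma \ref{rea1}$(ii)$; when $i=1$ the bound is $H_x^\star(\varnothing)=1$. If instead $(k_1,\cdots,k_r)=(\{1\}^j)$ with $j<m$, then $H_x^\star(\{1\}^j)\leq H_x^\star(\{1\}^{m-1})$ by Theorem \ref{rord}. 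In either case everything reduces to bounding $H_x^\star(\{1\}^{m-1})$. For the second claim, Lemma \ref{rea1}$(i)$ directly gives $H_x^\star(\{1\}^r)<x$.

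\emph{Step 3 (bound for $(\{1\}^j)$).} Use Lemma \ref{e1}:
\[
H_x^\star(\{1\}^j)=\frac{x}{j!}\int_0^1(-\log t)^j(1-t)^{x-1}dt.
\]
Equivalently, $\sum_j H_x^\star(\{1\}^j)y^j=\Gamma(x+1)\Gamma(1-y)/\Gamma(x+1-y)$. Split the integral at $t=1/x$. On $(0,1/x)$, bound $(1-t)^{x-1}\leq 1$ and substitute $t=e^{-v}$; this gives a contribution of order $(\log x)^j/j!$ plus lower-order terms. On $(1/x,1)$, bound $(-\log t)^j\leq(\log x)^j$ and use $x\int_0^1(1-t)^{x-1}dt=1$. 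Together these give $H_x^\star(\{1\}^j)\leq c\,(1+\log(1+x))^j$ with an absolute constant $c$. Combining Steps 1--3 with $x=\lambda|s|$, and using $\log(1+\lambda|s|)\leq\log\lambda+\log(1+|s|)$ (the extra term is absorbed by the power $m$ versus $m-1$), gives $|H_s^\star(k_1,\cdots,k_r)|\leq 3(1+\log(1+|s|))^m$. Likewise Step 1 with Lemma \ref{rea1}$(i)$ gives $|H_s^\star(\{1\}^r)|\leq C_\lambda\lambda|s|$.

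The main obstacle is the numerical constant $3$. The crude choice $\lambda=1$ gives $C_1=2/(1-e^{-1})\approx3.16$, and larger $\lambda$ worsens the factor $\lambda$ in the $(\{1\}^r)$ bound. I would first sharpen Step 1 using $\mathrm{Re}(s)>1$: write $|1-e^{-su}|\leq 1+e^{-\mathrm{Re}(s)u}$ for large $u$, and $|1-e^{-su}|\leq|s|u$ for small $u$. I would also compare with $H_x^\star$ at $x=\mathrm{Re}(s)$ and $x=|s|$ separately on the two ranges of $u$. If that still fails, I would bound the large-$u$ part $\int_{u>1/|s|}f$ directly from the explicit form of $f_{\{1\}^r}$ in the proof of Lemma \ref{zexp}, rather than through the real-variable comparison.
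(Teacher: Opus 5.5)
There is a genuine gap, and it has two parts: a false inequality in Step~2, and the constant $3$, which you leave open.

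\textbf{The false inequality.} Your overall route is the paper's: dominate $|1-e^{-su}|$ pointwise by a multiple of $1-e^{-xu}$ with $x$ real, integrate against the positive kernel of Lemma \ref{zexp}, then apply the real total order. But the tuple $(\{1\}^{i-1},2,\{1\}^{r-i})$ \emph{extends} $(\{1\}^{i-1})$, so by Theorem \ref{rord} it has the \emph{larger} value. Lemma \ref{rea1}$(ii)$, applied to $(\{1\}^{i})$, gives $H_x^\star(\{1\}^{i-1},2,\{1\}^{r-i})<H_x^\star(\{1\}^{i})$, not $<H_x^\star(\{1\}^{i-1})$. Concretely, for $m=1$ and $(k_1)=(2)$ your bound would say $H_x^\star(2)<1$, whereas $H_2^\star(2)=5/4$. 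The correct reduction is therefore to $H_x^\star(\{1\}^m)$, with exactly the exponent in the statement. You have no spare factor $1+\log(1+|s|)$ to absorb $C_\lambda$, $c$ or $\log\lambda$, so the constants must be sharp on both sides.

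\textbf{The constant.} Your tools as written do not produce $3$. Your $C_1\approx 3.16$ already exceeds it. Your split of Lemma \ref{e1} gives, for $j=1$, roughly $1+2\log x$, so $c>1$. The paper closes both points with two elementary observations.

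First, $\min\{y,2\}\le 3(1-e^{-y})$ for all $y>0$. The ratio $\min\{y,2\}/(1-e^{-y})$ is maximal at $y=2$, where it equals $2/(1-e^{-2})\approx 2.31$. Your $3.16$ comes from taking $c=1$ rather than $c=2$ in your lower bound. So $\lambda=1$ suffices, and $|H_s^\star(k_1,\cdots,k_r)|\le 3H_{|s|}^\star(k_1,\cdots,k_r)$ with $|s|>1$. Combined with Lemma \ref{rea1}$(i)$, this already gives $|H_s^\star(\{1\}^r)|\le 3|s|$.

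Second, the real bound holds with constant $1$. Positivity of the kernel makes $x\mapsto H_x^\star$ increasing for real $x$. With $n=\lfloor x\rfloor+1$,
\[
H_x^\star(\{1\}^m)<H_n^\star(\{1\}^m)\le \big(H_n^\star(1)\big)^m<(1+\log n)^m\le \big(1+\log(1+x)\big)^m,
\]
where the middle inequality comes from dropping the ordering $m_1\ge\cdots\ge m_m$. Together with $H_x^\star(k_1,\cdots,k_r)\le H_x^\star(\{1\}^m)$ from Theorem \ref{rord}, this yields the first claim with the constant $3$.
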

            \noindent{\bf Proof:} For $s\in\mathbb{R}$ and $s>1$, by the total order structure in Theorem \ref{rord}, we have 
            \[
              0  <H_s^\star(k_1,\cdots,k_r) \leq H_s^\star(\{1\}^{m})           \]
              for $ (k_1,\cdots,k_r)  \; \rotatebox[origin=c]{180}{\text{ $\succ$}}  (\{1\}^m)$.
              Thus for $s\in \mathbb{R}$, it suffices to show that 
              \[
              H_s^\star (\{1\}^m)\leq \left(1+ \mathrm{log}\,(1+s) \right)^m          \]
              for all $s>1$.  Denote by $[s]$ the integer part of $s$. Define $n=[s]+1$. By Lemma \ref{zexp},
             one has
               \[
 0< H_s^\star(\{1\}^m)<H_n^\star(\{1\}^m)<\left( H_n^\star(1)   \right)^m
  \]
  Since 
  \[
  H_n^\star(1)=1+\frac{1}{2}+\cdots +\frac{1}{n}<1+\int^n_1\frac{dx}{x}=1+\mathrm{log}\,n,
  \]
  we have 
  \[
   \Big{|}  H_s^\star(k_1,\cdots,k_r)     \Big{|}\leq \left(1+\mathrm{log}\,n \right)^m\leq \left(1+\mathrm{log}(1+s)\right)^m    \]
   for $s>1$.
   For $\mathrm{Re}(s)> 1$, by Lemma \ref{zexp},  
   \[
   \Big{|}    H_s^\star(k_1,\cdots,k_r)     \Big{|}\leq \int^{+\infty}_0 \Big{|}  1-e^{-su    }    \Big{|} f_{k_1,\cdots,k_r}(u)du.
      \]   
      We have 
      \[
      \Big{|} 1-e^{-w}\Big{|}\leq 3\left(1-e^{-|w|}     \right),\qquad \mathrm{Re}(w)>0.   \tag{17}
      \]         
      In fact, the formula $(17)$ can be proved by the following simple observation:
      \[
      \Big{|} 1-e^{-w}\Big{|}\leq \mathrm{min}\{|w|,2\}\leq 3\left(1-e^{-|w|}     \right), \quad  \mathrm{Re}(w)>0    \]
      By the formula $(17)$, we have 
      \[
       \Big{|}    H_s^\star(k_1,\cdots,k_r)     \Big{|}\leq   3 H_{|s|}^\star(k_1,\cdots,k_r) \leq  3\left(1+ \mathrm{log}\,(1+|s|) \right)^m \]
      for $\mathrm{Re}(s)> 1$. 
      Similarly, 
      \[
      \Big{|}  H_s^\star(\{1\}^r)     \Big{|}<3    H_{|s|}^\star(\{1\}^r) <3 |s|     \]
      for any $\mathrm{Re}(s)>1$ and $r\geq 1$.
       $\hfill\Box$\\
       
  By Lemma \ref{bgx} and Lemma \ref{spn}, we have
       \[
    \Big{|}  H_s^\star(k_1,\cdots,k_r) -\zeta^\star(k_1,\cdots,k_r)    \Big{|} \leq     \sum_{n=1}^{+\infty} \Bigg{|}  \frac{H_{s+n}^\star(k_2,\cdots,k_r)}{(s+n)^{k_1}}    \Bigg{|} \]      
    for $k_1\geq 2, k_2,\cdots,k_r\geq 1$.
      By Lemma \ref{zlog}, for $$(k_1,\cdots,k_r) \; \rotatebox[origin=c]{180}{\text{ $\succ$}}  (2, \{1\}^{m})$$   and $\sigma=\mathrm{Re}(s)> 1$, one has
      \[
       \Big{|}  H_s^\star(k_1,\cdots,k_r) -\zeta^\star(k_1,\cdots,k_r)    \Big{|} \leq \begin{cases} \sum_{n=1}^{+\infty} \frac{3\left(1+ \mathrm{log}\,(1+|s+n|) \right)^m  }{(\sigma+n)^2}&k_1=2;\\
       \sum_{n=1}^{+\infty} \frac{3|s+n|}{(\sigma+n)^3}  &k_1\geq 3.\\
       \end{cases} \]
           By the above formula, we have 
               \[
   \Big{|}   \mathop{\mathrm{lim}}_{r\rightarrow+\infty}   H_s^\star(k_1,\cdots,k_r) -   \mathop{\mathrm{lim}}_{r\rightarrow+\infty}  \zeta^\star(k_1,\cdots,k_r)    \Big{|} \leq B_s \]       
   for any ${\bf k}=(k_1,\cdots,k_r,\cdots)\in \mathcal{T}\subseteq \widehat{\mathcal{T}}$ with ${\bf k}  \; \rotatebox[origin=c]{180}{\text{ $\succ$}}  (2,\{1\}^m$.  
   Here   
   \[
    B_s=\begin{cases} \sum_{n=1}^{+\infty} \frac{3\left(1+ \mathrm{log}\,(1+|s+n|) \right)^m  }{(\sigma+n)^2}&k_1=2;\\
       \sum_{n=1}^{+\infty} \frac{3|s+n|}{(\sigma+n)^3}  &k_1\geq 3.\\
       \end{cases}    \]
   So we have
   \[
   \mathop{\mathrm{lim}}_{\mathrm{Re}(s)\rightarrow+\infty} \left(   \mathop{\mathrm{lim}}_{r\rightarrow+\infty}   H_s^\star(k_1,\cdots,k_r)     \right)= \mathop{\mathrm{lim}}_{r\rightarrow+\infty}  \zeta^\star(k_1,\cdots,k_r)     \]
   with bounded imaginary part $\mathrm{Im}(s)$ 
   for any ${\bf k}=(k_1,\cdots,k_r,\cdots)\in \mathcal{T}\subseteq \widehat{\mathcal{T}}$.        
   As a result, 
   \[
    \mathop{\mathrm{lim}}_{\mathrm{Re} (s)\rightarrow +\infty} \mathfrak{h}_s\Big{|}_{\mathcal{T}}=\eta
    \]     
    with bounded imaginary part $|\mathrm{Im}(s)|$.
Theorem \ref{com} $(iii)$ is proved.              
               \section*{Acknowledgements}
         This project is  supported  by the National Natural Science Foundation of China (Grant No.12571009) and the Natural Science Foundation of Hunan Province, China (Grant No.2026JJ40003).

\end{document}